\def\titlerunning#1{\gdef\titrun{#1}}
\def\author#1{\gdef\autrun{\def\and{\unskip, }#1}\gdef\@author{#1}}
\def\address#1{{\def\and{\\\hspace*{18pt}}\renewcommand{\thefootnote}{}%
\footnote {#1}}%
\markboth{\autrun}{\titrun}}
\def\email#1{e-mail: #1}
\def\subjclass#1{{\renewcommand{\thefootnote}{}%
\footnote{\emph{Mathematics Subject Classification (2010):} #1}}}
\def\keywords#1{\par\medskip
\noindent\textbf{Keywords.} #1}
\newtheorem{theorem}{Theorem}[section]
\newtheorem{corollary}[theorem]{Corollary}
\newtheorem{lemma}[theorem]{Lemma}
\newtheorem{proposition}[theorem]{Proposition}
\theoremstyle{definition}
\newtheorem{remark}[theorem]{Remark}
\numberwithin{equation}{section}
\newcommand{\R}{\mathbb{R}}
\newcommand{\brac}[1]{\left (#1 \right )}
\newcommand{\abs}[1]{\left |#1 \right |}
\newcommand{\Ep}{\bigwedge\nolimits}
\newcommand{\lap}{\Delta }
\newcommand{\laph}{\laps{1}}
\newcommand{\aleq}{\lesssim}
\newcommand{\aeq}{\approx}
\newcommand{\Rz}{\mathcal{R}}
\newcommand{\Hz}{\mathcal{H}}
\newcommand{\laps}[1]{(-\lap) ^{\frac{#1}{2}}}
\newcommand{\lapv}{(-\lap) ^{\frac{1}{4}}}
\newcommand{\lapms}[1]{I^{#1}}
\newcommand{\dv}{\operatorname{div}}
\def\mvint_#1{\mathchoice
          {\mathop{\vrule width 6pt height 3 pt depth -2.5pt
                  \kern -8pt \intop}\nolimits_{\kern -3pt #1}}%
          {\mathop{\vrule width 5pt height 3 pt depth -2.6pt
                  \kern -6pt \intop}\nolimits_{#1}}%
          {\mathop{\vrule width 5pt height 3 pt depth -2.6pt
                  \kern -6pt \intop}\nolimits_{#1}}%
          {\mathop{\vrule width 5pt height 3 pt depth -2.6pt
                  \kern -6pt \intop}\nolimits_{#1}}}
\def\esssup{\mathop{\rm ess\,sup\,}}
\def\lip{{\rm Lip\,}}
\def\N{{\mathbb N}}
\begin{document}


\baselineskip=17pt


\titlerunning{Commutator estimates via harmonic extensions}

\title{Sharp commutator estimates\\ via harmonic extensions}

\author{Enno Lenzmann
\and 
Armin Schikorra}

\date{}

\maketitle

\address{
E. Lenzmann: University of Basel, Department of Mathematics and Computer Science, Spiegelgasse~1, CH-4051 Basel, Switzerland; \email{enno.lenzmann@unibas.ch}
\and
A. Schikorra: Mathematisches Institut, 
Abt. f\"ur Reine Mathematik, 
Albert-Ludwigs-Universit\"at Freiburg,
Eckerstra\ss{}e~1,
79104 Freiburg im Breisgau,
Germany; \email{armin.schikorra@math.uni-freiburg.de}
}

\subjclass{Primary 42B20; Secondary 46E35, 42B25}


\begin{abstract}
We give an alternative proof of 
several sharp commutator estimates involving Riesz transforms, Riesz potentials, and fractional Laplacians. 
Our methods only involve harmonic extensions to the upper half-space, integration by parts, and trace space characterizations. 

The commutators we investigate are Jacobians, more generally Coifman-Rochberg-Weiss commutators, Chanillo's commutator with the Riesz potential, Coifman-McIntosh-Meyer, Kato-Ponce-Vega type commutators, the fractional Leibniz rule, and the Da~Lio-Rivi\`{e}re three-term commutators. We also give a limiting $L^1$-estimate for a double commutator of Coifman-Rochberg-Weiss-type, and several intermediate estimates. Some of the estimates obtained seem to be new or known only to some experts.

The beauty of our method is that all those commutator estimates, which are originally proven by various specific methods or by general paraproduct arguments, can be obtained purely from integration by parts and trace theorems. 
Another interesting feature is that in all these cases the ``cancellation effect'' responsible for the commutator estimate simply follows from the product rule for classical derivatives and can be traced in a precise way.

\keywords{Commutator estimates, harmonic extension}
\end{abstract}
\newpage \tableofcontents
\section{Introduction: Jacobian estimates}
In this work we propose an alternative method for proving a large class of sharp and intermediate commutator estimates. The method is based on harmonic extensions to the upper half space $\R^{n+1}_+$, integration by parts, and trace space characterizations.

To illustrate the main ideas, let us first consider Jacobians $\det (\nabla u)$ for a map $u: \R^n \to \R^n$. Jacobians appear naturally in geometric analysis, they are infinitesimal deformations of the space under the map $u$, as one sees, e.g., in the change of variables formula. From the point of view of harmonic analysis, they are very special forms of commutators, as was discovered in the 1990s by Coifman, Lions, Meyer, and Semmes \cite{CLMS}. 

Fine estimates on Jacobians have proven to be crucial in particular to the theory of geometric partial differential equations. Just to name a few examples: the harmonic map equation \cite{Helein90,Riv06}, the prescribed mean curvature  equation \cite{Wente69, Heinz-1975, Bethuel-1992,Riv06}, or the conformally parametrized surface equation \cite{Mueller-Sverak-1995}. The reason is that the determinant structure acting on gradients leads to ``compensated compactness'' and ``integrability by compensation''-effects. These fine Jacobian estimates had been observed in relation with Wente's inequality \cite{Reshetnyak-1968,Wente69,BrC84,Tartar84,Mueller90}. Finally, the above mentioned seminal work of Coifman, Lions, Meyer, and Semmes \cite{CLMS} drew the connection to commutator estimates by Coifman, Rochberg, and Weiss \cite{Coifman-Rochberg-Weiss-1976} from the 1970s. In particular, the following estimate holds. 

\begin{theorem}[Jacobian estimate, \cite{CLMS,Sickel-Youssfi-1999a}]\label{th:CLMSjac}
Let $\varphi \in C_c^\infty(\R^n)$ and $u = (u^1,\ldots, u^n) \in C_c^\infty(\R^n,\R^n)$. Then
\begin{equation}\label{eq:BMOjacobian}
\int_{\R^n} \varphi\  \det(\nabla u) \aleq [\varphi]_{BMO}\ \|\nabla u\|_{L^n(\R^n)}^n.
\end{equation}
Also, the following intermediate estimate holds: let  $0 < s_i < 1$, $1 < p_i < \infty$ for $i=0,\ldots,n$ be such that \begin{equation}\label{eq:detconditions} \sum_{i=0}^n s_i = n,\quad \sum_{i=0}^{n} \frac{1}{p_i} = 1\end{equation} then
\begin{equation}\label{eq:intermediatejacobian}
 \int_{\R^n} \varphi\  \det(\nabla u) \aleq [\varphi]_{W^{s_0,p_0}}\ [u^1]_{W^{s_1,p_1}}\ \ldots\ \ [u^n]_{W^{s_n,p_n}}.
\end{equation}
\end{theorem}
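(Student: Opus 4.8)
\medskip
\noindent\emph{Proof proposal.}
The plan is to lift everything to the upper half-space $\R^{n+1}_+$ by harmonic extension and to use that the Jacobian $n$-form is \emph{closed}. Write $\Phi,U^1,\ldots,U^n$ for the harmonic extensions to $\R^{n+1}_+$ of $\varphi,u^1,\ldots,u^n$; since the data are smooth and compactly supported, these extensions decay together with all derivatives at infinity, so every integration by parts below is legitimate.

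\medskip
\noindent\emph{Step 1: integration by parts in the half-space.}
On $\R^{n+1}_+$ put $\omega:=\Phi\,dU^1\wedge\cdots\wedge dU^n$. Each $dU^i$ being exact, the $n$-form $\alpha:=dU^1\wedge\cdots\wedge dU^n$ is closed, $d\alpha=0$; this is the \emph{only} place where ``cancellation'' enters, and $d\alpha=0$ is nothing but the symmetry of mixed second derivatives --- the product rule for classical derivatives. Hence $d\omega=d\Phi\wedge\alpha$, and Stokes' theorem on $\R^{n+1}_+$ --- whose boundary $\{t=0\}$ we identify with $\R^n$, on which $\Phi$ restricts to $\varphi$ and $dU^i$ to $du^i$, so that $\alpha$ restricts to $\det(\nabla u)\,dx$ --- gives, up to an orientation sign,
\[
\int_{\R^n}\varphi\,\det(\nabla u)=\int_{\R^{n+1}_+}d\Phi\wedge dU^1\wedge\cdots\wedge dU^n=\int_{\R^{n+1}_+}\nabla_{x,t}\Phi\cdot V\,dx\,dt,
\]
where $V$ is the vector field on $\R^{n+1}_+$ assembled (up to signs) from the $n\times n$ minors of the matrix with rows $\nabla_{x,t}U^1,\ldots,\nabla_{x,t}U^n$; by Hadamard's inequality $|V|\aleq|\nabla U^1|\cdots|\nabla U^n|$ pointwise.

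\medskip
\noindent\emph{Step 2: the intermediate estimate \eqref{eq:intermediatejacobian}.}
Put $\beta_0:=1-s_0-\tfrac1{p_0}$ and $\beta_i:=1-s_i-\tfrac1{p_i}$ for $i=1,\ldots,n$. The conditions \eqref{eq:detconditions} are \emph{exactly equivalent} to $\beta_0+\beta_1+\cdots+\beta_n=(n+1)-n-1=0$. Inserting $1=t^{\beta_0}t^{\beta_1}\cdots t^{\beta_n}$ into the bulk integral of Step 1, bounding $|V|\aleq\prod_i|\nabla U^i|$, and applying the generalized Hölder inequality on $\R^{n+1}_+$ with exponents $p_0,p_1,\ldots,p_n$ yields
\[
\Bigl|\int_{\R^n}\varphi\,\det(\nabla u)\Bigr|\aleq\bigl\|t^{\beta_0}\nabla\Phi\bigr\|_{L^{p_0}(\R^{n+1}_+)}\prod_{i=1}^n\bigl\|t^{\beta_i}\nabla U^i\bigr\|_{L^{p_i}(\R^{n+1}_+)}.
\]
Each factor is comparable to the corresponding homogeneous Gagliardo seminorm by the harmonic-extension (trace) characterization $[f]_{W^{s,p}}^p\aeq\int_{\R^{n+1}_+}|\nabla\tilde f(x,t)|^p\,t^{(1-s)p-1}\,dx\,dt$ (for $0<s<1$, $1<p<\infty$), and \eqref{eq:intermediatejacobian} follows. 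The determinant structure entered only through Step 1; the rest is a bare Hölder estimate, and the constraints \eqref{eq:detconditions} appear exactly as the cancellation of the $t$-weight.

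\medskip
\noindent\emph{Step 3: the sharp $BMO$ estimate \eqref{eq:BMOjacobian}, and the main obstacle.}
This endpoint does \emph{not} follow from Steps 1--2: now all $\beta_i=0$, and since $\nabla U^i$ does not decay as $t\to0$ the tent-space norm one would want to attach to $V$ is infinite, so plain Hölder fails. Here one must use the genuine input --- the Carleson-measure characterization of $BMO$ via the harmonic extension, $\sup_B|B|^{-1}\int_{\widehat B}|\nabla\Phi|^2\,t\,dx\,dt\aleq[\varphi]_{BMO}^2$, paired through tent-space duality with the area-function / nontangential-maximal-function characterizations of $L^n$ and $L^{n/(n-1)}$. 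Concretely, I would first rewrite $\det(\nabla u)=\nabla u^1\cdot C$ with a divergence-free $C\in L^{n/(n-1)}$ assembled from $\nabla u^2,\ldots,\nabla u^n$ --- again from the product rule, $\dv C=0$ being the cofactor identity and $\|C\|_{L^{n/(n-1)}}\aleq\prod_{i\ge2}\|\nabla u^i\|_{L^n}$ --- extend $C$ componentwise to $\tilde C$ and note that $(\tilde C,0)$ is divergence-free on $\R^{n+1}_+$, and then, after a second integration by parts converting $\int_{\R^n}\varphi\,\nabla u^1\cdot C$ into a $t$-weighted bulk integral, estimate it by a tent-space pairing, the divergence-free structure being used to shift a derivative onto $\Phi$ so that the remaining factors are controlled by the square functions of $\nabla u^1$ and of $C$, the $t$-powers balancing because $\det(\nabla u)$ carries $n$ derivatives altogether. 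The main obstacle is precisely this endpoint bookkeeping --- setting up the tent-space duality adapted to the harmonic extension and matching it, with the correct weights, against the integer-order and $BMO$ characterizations. A secondary, purely technical point common to all steps is to justify the decay at infinity of the harmonic extensions of $C_c^\infty$ data underlying Stokes' theorem and the integrations by parts.
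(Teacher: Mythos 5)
Steps 1 and 2 of your proposal coincide with the paper's proof of \eqref{eq:intermediatejacobian}: the Stokes lift to $\R^{n+1}_+$, the $t$-weight splitting with $\beta_i=1-s_i-1/p_i$ summing to zero under \eqref{eq:detconditions}, the generalized H\"older inequality, and the harmonic-extension trace characterization of $\dot W^{s,p}$. This part is correct and essentially identical to the paper.

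Step 3, however, is left as a sketch and is not the paper's route. You correctly diagnose that plain H\"older cannot reach the BMO endpoint, and you propose a detour: extract a divergence-free cofactor field $C$ from $\nabla u^2,\dots,\nabla u^n$ (Piola identity), extend it, and pair a Carleson measure for $\Phi$ against a tent-space norm built from $\nabla U^1$ and $\tilde C$. This is a plausible Chanillo-style argument (the paper cites \cite{Chanillo-1991} for exactly such a reduction of the CLMS div-curl estimate), but as written you stop at ``the main obstacle is precisely this endpoint bookkeeping''; the pairing, the choice of $t$-powers, and the transfer of a derivative onto $\Phi$ are not carried out, so Step 3 is a gap rather than a proof. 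The paper does something simpler that keeps the determinant intact and uses only the product rule and harmonicity. Starting from $\mathcal C=\left|\int_{\R^{n+1}_+}\det(\nabla_{\R^{n+1}}U^1,\dots,\nabla_{\R^{n+1}}U^n,\nabla_{\R^{n+1}}\Phi)\right|$, one integrates by parts once more in $t$ to write $\mathcal C=\left|\int_{\R^{n+1}_+}t\,\partial_t\det(\cdots)\right|$. Then, distributing $\partial_t$ by the product rule and using harmonicity ($\partial_{tt}=-\Delta_x$ for each column) together with one $x$-integration by parts whenever $\partial_t$ hits $\nabla\Phi$, one arrives at
\[
\mathcal C\aleq\int_{\R^{n+1}_+} t\,|\nabla_{\R^{n+1}}U|^{n-1}\,|\nabla_{\R^{n+1}}\nabla_x U|\,|\nabla_{\R^{n+1}}\Phi|,
\]
i.e.\ the extra derivative can always be arranged to be a tangential $\nabla_x$ falling on a $U$ factor rather than on $\Phi$. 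This is the ``second cancellation'' you are groping for, and it is exactly what makes the BMO black box (Proposition~\ref{pr:BMOusualestimate} with $s=1$, pairing the Carleson measure of $\nabla\Phi$ against the tent-space quantity built from $t|\nabla\nabla_x U|\,|\nabla U|^{n-1}$) applicable. So: your intermediate estimate is complete and matches the paper; your endpoint step identifies the right obstacle but proposes an unfinished, genuinely different route, while the paper resolves the endpoint internally via an extra $t\partial_t$ integration by parts plus harmonicity, without ever invoking the div-curl reformulation.
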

For a definition of the norms we refer to Section~\ref{s:poissonchar}.
The first estimate \eqref{eq:BMOjacobian} is due to \cite{CLMS}, where it is proven that $\det(\nabla u)$ belongs to the Hardy space if $u \in W^{1,n}(\R^n)$. An alternative proof in \cite{CLMS} rewrites the Jacobian in the form of the Coifman-Rochberg-Weiss commutator \cite{Coifman-Rochberg-Weiss-1976}, see Theorem~\ref{th:CRW}. The second estimate \eqref{eq:intermediatejacobian} is due to Sickel and Youssfi \cite{Sickel-Youssfi-1999a}, who use Littlewood-Paley decomposition and paraproducts.

The main discovery of the present work is that one can obtain Theorem~\ref{th:CLMSjac} -- and several other commutator estimates (see below) -- by an integration by parts formula and trace theorems: one needs to interpret the involved functions as ${\R^n \times \{0\}}$-boundary values of harmonic functions on $\R^{n+1}_+$. We illustrate this method with a new proof first of \eqref{eq:intermediatejacobian} and then of \eqref{eq:BMOjacobian}.

\begin{proof}[Proof of intermediate estimate (\ref{eq:intermediatejacobian})]
Let $\Phi: \R^{n+1}_+ \to \R$, $U : \R^{n+1}_+ \to \R^n$ be a harmonic extension to $\R^{n+1}_+$ of $\varphi$ and $u$, respectively:
\begin{equation}\label{eq:PhiUextension}
 \begin{cases}
 \lap_{\R^{n+1}}  \Phi \equiv (\lap_x + \partial_{tt}) \Phi = 0 \quad &\mbox{in $\R^{n+1}_+$},\\
 \Phi(x,0) = \varphi(x) \quad &\mbox{in $\R^n$}.
 \end{cases}
 \quad \quad 
 \begin{cases}
 \lap_{\R^{n+1}}  U = 0 \quad &\mbox{in $\R^{n+1}_+$},\\
 U(x,0) = u(x) \quad &\mbox{in $\R^n$}.
 \end{cases}
\end{equation}
For the following to work we will choose zero-boundary data at infinity, so $\Phi$ and $U$ are given explicitly by the Poisson formula \eqref{eq:poissonkernel}, $\Phi := P_t^1 \varphi$ and $U := P_t^1 u$. 

The integration-by-parts formula\footnote{The easiest way to see this might be via Stokes' theorem for differential forms on $\R^{n+1}_+$. Observe that the boundary $\partial \R^{n+1}_+ = \R^n \times \{0\}$. Thus
\[
 \left |\int_{\R^{n+1}_+} d\Phi \wedge dU^1 \wedge dU^2 \ldots \wedge dU^n \right |=\left | \int_{\partial \R^{n}_+} \varphi\ du^1 \wedge du^2 \ldots \wedge du^n\right |.
\]
} gives us
\begin{equation}\label{eq:intbypartstrick}
 \mathcal{C} := \left |\int_{\R^n} \varphi\  \det_{n\times n}(\nabla_{\R^n} u) \right |= \left |\int_{\R^{n+1}_+} \det_{(n+1)\times (n+1)}(\nabla_{\R^{n+1}} \Phi, \nabla_{\R^{n+1}} U)\right |.
\end{equation}
This beautiful observation\footnote{Here is where the ``compensation effect'' enters: in \eqref{eq:intbypartstrick} the derivatives are uniformly distributed to all functions. Exactly this distribution of derivatives for Jacobian structures was observed by L. Tartar via the Fourier transform. He used this to give a proof for Wente's inequality \cite{Tartar84}. See also the introduction of \cite{Schikorra-SNHarmS10} where this strategy is applied to the Da~Lio-Rivi\`{e}re three-term commutator.} was used by Brezis and Nguyen \cite{Brezis-Nguyen-2011} who gave an elegant argument for estimates in terms of $[\Phi]_{\lip}$. 
However, if we aim for $W^{s,p}$-norms (and later the BMO-norm), we need to argue more carefully and distribute weights in the $(n+1)$-th direction, i.e. $t$-weights.

Namely, in view of \eqref{eq:detconditions} and H\"older's inequality,
\[
\begin{split}
 \mathcal{C}   \aleq &\brac{\int_{\R^n} \int_{0}^\infty |t^{1-\frac{1}{p_0}-s_0}\,  \nabla \Phi(x,t)|^{p_0}\ dt\ dx}^{\frac{1}{p_0}} \\
 &\cdot \brac{\int_{\R^n} \int_{0}^\infty |t^{1-\frac{1}{p_1}-s_1}\,  \nabla U^1(x,t)|^{p_1}\ dt\ dx}^{\frac{1}{p_1}}\\
 &\vdots\\
 &\cdot\brac{\int_{\R^n} \int_{0}^\infty |t^{1-\frac{1}{p_n}-s_n}\, \nabla U^n(x,t)|^{p_n}\ dt\ dx}^{\frac{1}{p_n}}. 
\end{split}
 \]
Now the trace theorems for harmonic functions, see Proposition~\ref{pr:Sobolevspaces} below, yields
\[
 \brac{\int_{\R^n} \int_{0}^\infty |t^{1-\frac{1}{p_0}-s_0} \nabla \Phi(x,t)|^{p_0}\ dt\ dx}^{\frac{1}{p_0}} \aeq [\varphi]_{W^{s_0,p_0}(\R^n)},
\]
and for $i=1,\ldots,n$,
\[
 \brac{\int_{\R^n} \int_{0}^\infty |t^{1-\frac{1}{p_i}-s_i} \nabla U^i(x,t)|^{p_i}\ dt\ dx}^{\frac{1}{p_i}} \aeq [u^i]_{W^{s_i,p_i}(\R^n)}.
\]
Thus we have shown that estimate \eqref{eq:intermediatejacobian} holds.
\end{proof}
The BMO-estimate \eqref{eq:BMOjacobian} is a little more delicate: an additional integration by parts is needed and the trace-estimates are more involved, see Proposition~\ref{pr:BMOcharacterization}.
\begin{proof}[Proof of BMO-estimate \eqref{eq:BMOjacobian}]
As in \eqref{eq:PhiUextension}, let $\Phi$ and $U$ be harmonic extensions of $\varphi$ and $u$, respectively. Again we have
\[
 \mathcal{C} := \left |\int_{\R^{n}} \det (\nabla u^1,\ldots,\nabla u^n)\, \varphi\right | = \left |\int_{\R^{n+1}_+} \det (\nabla_{\R^{n+1}} U^1,\ldots,\nabla_{\R^{n+1}} U^n, \nabla_{\R^{n+1}} \Phi) \right |.
\]
Integrating by parts in $t$-direction, we can add an additional derivative $\partial_t$, and obtain
\begin{equation}\label{eq:CLMSBMO:pt}
 \mathcal{C}=\left |\int_{\R^{n+1}_+} t\, \partial_t \det (\nabla_{\R^{n+1}} U^1,\ldots,\nabla_{\R^{n+1}} U^n, \nabla_{\R^{n+1}} \Phi) \right |.
\end{equation}
Here we used that the harmonic extensions $U$ and $\Phi$ satisfy 
\[
\lim_{t \to \infty} t\, |\nabla U(x,t)|^{n}\, |\nabla \Phi(x,t)| = \lim_{t \to 0} t\, |\nabla U(x,t)|^{n}\, |\nabla \Phi(x,t)| = 0,
\] 
see Lemma~\ref{pr:pc:maximal}. Next we claim that 
\begin{equation}\label{eq:detest}
 \mathcal{C} \aleq \int_{\R^{n+1}_+} t\ |\nabla_{\R^{n+1}} U|^{n-1} |\nabla_{\R^{n+1}} \nabla_xU|\ |\nabla_{\R^{n+1}} \Phi|.
\end{equation}
That is, we can ensure that a second derivative in $x$ hits a term different from $\Phi$ -- this can be seen as a second compensation effect. Once we have \eqref{eq:detest}, the BMO-estimate follows from trace theorems (see Proposition~\ref{pr:BMOusualestimate} for $s = 1$) and we conclude that
\[
 \mathcal{C} \aleq [\varphi]_{BMO(\R^n)}\ \|\laph u\|_{L^{n}(\R^n)}^n \aeq [\varphi]_{BMO(\R^n)}\ \|\nabla u\|_{L^{n}(\R^n)}^n.
\]
Let us prove \eqref{eq:detest}. When the derivative $\partial_t$ in \eqref{eq:CLMSBMO:pt} hits one of the $\nabla_{\R^{n+1}} U^i$, $i=1,\ldots,n$, we simply observe that the harmonicity $\partial_{tt} U = -\lap_x U$ implies
\[
 |\nabla^2_{\R^{n+1}} U |\aleq |\nabla_{\R^{n+1}} \nabla_x U |,
\]
which leads to an estimate as in \eqref{eq:detest}.

It remains to consider the case when the derivative $\partial_t$ hits $\nabla_{\R^{n+1}}\Phi$. Renaming the variables $(z_1,\ldots,z_{n+1}) = (x_1,\ldots,x_n,t)$ we have
\[
\begin{split}
 \mathcal{C}_2 :=& \left |\int_{\R^{n+1}_+} t\, \det (\nabla_{\R^{n+1}} U^1,\ldots,\nabla_{\R^{n+1}} U^n, \partial_t \nabla_{\R^{n+1}} \Phi) \right |\\
 \leq&  \sum_{i_1,\ldots,i_n = 1}^{n+1} \sum_{k=1}^{n}\left |\int_{\R^{n+1}_+} z_{n+1}\, \partial_{z_{i_1}}U^1\ldots \partial_{z_{i_n}}U^n\, \partial_{z_{n+1}} \partial_{z_k} \Phi \right |\\
 &+\sum_{i_1,\ldots,i_n = 1}^{n+1} \left |\int_{\R^{n+1}_+} z_{n+1}\, \partial_{z_{i_1}}U^1\ldots \partial_{z_{i_n}}U^n\, \partial_{z_{n+1}} \partial_{z_{n+1}} \Phi \right |\\
\end{split}
 \]
By harmonicity $\partial_{z_{n+1}z_{n+1}} \Phi = -\sum_{\ell=1}^n \partial_{z_\ell}\partial_{z_\ell} \Phi$. 
\[
\begin{split}
 \mathcal{C}_2 
 \leq&  \sum_{i_1,\ldots,i_n = 1}^{n+1} \sum_{k=1}^{n}\left |\int_{\R^{n+1}_+} z_{n+1}\, \partial_{z_{i_1}}U^1\ldots \partial_{z_{i_n}}U^n\, \partial_{z_{n+1}} \partial_{z_k} \Phi \right |\\
 &+\sum_{i_1,\ldots,i_n = 1}^{n+1} \sum_{\ell = 1}^n \left |\int_{\R^{n+1}_+} z_{n+1}\, \partial_{z_{i_1}}U^1\ldots \partial_{z_{i_n}}U^n\, \partial_{z_{\ell}} \partial_{z_{\ell}} \Phi \right |\\
\end{split}
 \]
Integrating by parts, in $z_k$ in the first term, and in $z_\ell$ in the second term, we find
\[
\begin{split}
 \mathcal{C}_2  \leq&  \sum_{i_1,\ldots,i_n = 1}^{n+1} \sum_{k=1}^{n}\left |\int_{\R^{n+1}_+} z_{n+1}\, \partial_{z_k} \brac{\partial_{z_{i_1}}U^1\ldots \partial_{z_{i_n}}U^n}\, \partial_{z_{n+1}} \Phi \right |\\
 &+\sum_{i_1,\ldots,i_n = 1}^{n+1} \sum_{\ell=1}^n \left |\int_{\R^{n+1}_+} z_{n+1}\, \partial_{z^{\ell}} \brac{\partial_{z_{i_1}}U^1\ldots \partial_{z_{i_n}}U^n}\, \partial_{z^\ell} \Phi \right |.
\end{split}
 \]
No boundary terms appear in the above integration-by-parts in $z_k$ and $z_\ell$ direction, since $k, \ell \leq n$ and the harmonic extension decays to zero sufficiently fast as $|x| \to \infty$, see Proposition~\ref{pr:pc:maximal}. We conclude that \[\mathcal{C}_2 \aleq \int_{\R^{n+1}_+} t\ |\nabla_{\R^{n+1}} U|^{n-1} |\nabla_{\R^{n+1}} \nabla_xU|\ |\nabla_{\R^{n+1}} \Phi|.\]
This establishes \eqref{eq:detest} and consequently \eqref{eq:BMOjacobian} is proven.
\end{proof}

To summarize: by an harmonic extension, integration by parts, and then trace-space characterizations we obtain the full strength of the Jacobian estimate, Theorem~\ref{th:CLMSjac}, by Coifman, Lions, Meyer, and Semmes \cite{CLMS}.

\subsection*{Outline of the paper}
In Section~\ref{s:poissonoperator} we introduce the harmonic extension via the generalized Poisson-operator.

In the remaining sections we use the ideas presented above to show several commutator estimates. Most of them have been proven before, some of the intermediate estimates seem to be new or known only to some experts. Let us remark that some of those estimates here (that we could not find in the literature) have been announced in \cite{Schikorra-epsilon}, and were proven in the arxiv-version of that paper via paraproducts.

We will treat the following:
\begin{itemize} \item Section~\ref{s:CLMSdivcurl}: The div-curl estimate by Coifman, Lions, Meyer, and Semmes. 
\item Section~\ref{s:CRW}: The Coifman-Rochberg-Weiss commutator estimate for Riesz transforms.
\item Section~\ref{s:Chanillo}: The Chanillo commutator estimate for Riesz potentials (of order $< 1$). 
\item Section~\ref{s:CMcIM}: Coifman-McIntosh-Meyer type commutator estimates.
\item Section~\ref{s:leibniz}: The fractional Leibniz rule.
\item Section~\ref{s:dalioriviere}: The Da~Lio-Rivi\`{e}re three-term commutator estimate.
\item Section~\ref{s:new}: $L^1$-estimate for a double-commutator of Coifman-Rochberg-Weiss type.
\end{itemize}
Some of the estimates seem to be new or known only to some experts; in particular we could not find in the literature the last-mentioned $L^1$-estimate of Section~\ref{s:new} and the H\"older-scale estimates of Coifman-McIntosh-Meyer type.

In the last two sections we collect the used trace-space characterization. We propose to use them as black boxes.  
In Section~\ref{s:poissonchar} we gather estimates on the Poisson operator and identification of trace spaces; in Section~\ref{s:bbestimates} we state resulting trace-inequalities. The proofs can mostly be found in the literature, in particular Stein's books. We indicate the relevant arguments in the appendix.
\subsection*{On harmonic extensions}
The technique of harmonic extensions to the upper half space and integration by parts appears in several contexts in harmonic analysis. For example, Coifman, Jones, and Semmes used a holomorphic extension \cite{Coifman-Jones-Semmes-1989} to prove $L^2$-boundedness of Cauchy integrals on Lipschitz curves. 
Chanillo provided a proof of a version of the div-curl estimate by Coifman, Lions, Meyer, and Semmes, see \cite{Chanillo-1991}, via the harmonic extension which is very similar to the above proof of \eqref{eq:BMOjacobian}. 
Also, harmonic extensions have been used for the degree-analysis of Jacobians estimates, see \cite{Bourgain-Brezis-Mironescu-2005} and the already mentioned \cite{Brezis-Nguyen-2011}; also functional calculus in the Besov spaces has been investigated \cite{Mironescu-Russ-2015}. Also commutators have been estimated via an extension argument before, see, e.g., \cite{Shen-2013} and references within. 
\subsection*{Comparison to Littlewood-Paley decomposition} There is, of course, a general technique for proving almost any commutator estimate: Littlewood-Paley decompositions and paraproducts. The advantage of the method discovered here is that the deep harmonic analysis facts are concentrated in the trace characterization results and those can be used as a black box -- see Section~\ref{s:poissonchar} and \ref{s:bbestimates}. Moreover, the \emph{cancellation effects} responsible for the commutator estimates follow from very simple product-rules and can be traced exactly. This is different from the paraproduct approach which -- while being a stronger and more general technique -- is also much more involved, seemingly messy, and less accessible to the non-expert. 

Let us also remark that the methods presented here generalize to estimates in Besov- and Triebel-Lizorkin spaces -- some trace theorems (which are estimates on Poisson-type potentials, see Section~\ref{s:poissonoperator}) actually can be seen from the identification of Besov- and Triebel spaces, see in particular the recent work by Bui and Candy \cite{BuiCandy-2015}.

\subsection*{Limits of our method} Generally, the extension method seems to be useful, if the resulting extended operator has a \emph{product rule}. As we shall see, this is the case for Riesz transforms $\Rz_i$ and $s/2$-Laplacians $\laps{s}$. For example $\lap (uv)$ is easy to compute, while $\laps{s} (uv)$ is more complicated. When the operator -- in our case the fractional Laplacian $\laps{s}$ and Riesz transforms $\Rz_i$ -- are replaced with more general operators (e.g. general multipliers, Calder\'on-Zygmund kernels), then the extension does not simplify the situation -- the extended expression may not enjoy an easily computable product rule. Then indeed the general argument of Littlewood-Paley theory and paraproducts seems more appropriate. 

Also, Fourier-transform based arguments like Littlewood-Paley theory but also our method do not seem to be well-suited to obtain pointwise commutator estimates, as e.g. the ones introduced by the second-named author in \cite{Schikorra-2011}, see also \cite{DaLio-Schikorra-palphSphere,BPSknot12, Schikorra-epsilon}.

\subsection*{Possible extensions}
This work is partially thought as an invitation to the reader: we do not expect that the above list of examples is exhaustive. It should be possible to obtain several more estimates with our method. In particular it would be interesting to see if one obtains sharp limit space estimates as in \cite{Bourgain-Li-2014,Li-2016}. 
Moreover, there are more operators that can be extended with a product rule, for example the ``Bessel'' operator $(1-\Delta)^{s/2}$, and thus for which our arguments might be useful.
We treat only differential orders $<2$, sometimes even $<1$. Since there are higher order extensions to local operators, see \cite{Yang-2013,Roncal-Stinga-2016}, it should, in principle, be also possible to obtain higher order commutator estimates.
Furthermore, it would be interesting to see how our methods performs for nonlinear commutators; for example the Calder\`on commutators, see e.g. \cite{Calderon-1965,Verdera-2001}, or nonlinear commutators related to the $p$-Laplacian, \cite{SchikorraCPDE14,Schikorra-commutators}. Also, as S. Chanillo pointed out to us after reading an earlier version of this article, it should be possible to obtain weighted norm inequalities from these methods cf. \cite{Chanillo-1991}.

\section{Harmonic extension via the Poisson operator}\label{s:poissonoperator}
For smooth, compactly supported functions $f \in C_c^\infty(\R^n,\R^m)$ the Poisson extension operator $P_t^s$ for $s > 0$ is given by
\begin{equation}\label{eq:poissonkernel}
 P_t^s f(x) := C_{n,s} \int_{\R^n} \frac{t^{s}}{\brac{|x-z|^2 + t^2}^{\frac{n+s}{2}}} f(z)\ dz.
\end{equation}
Often we will denote $F^s(x,t) := P_t^sf(x)$. For $s = 1$ the operator $P_t^1$ is the usual Poisson operator and $F^1$ is the harmonic extension of $f$ to $\R^{n+1}_+ = \R^n \times (0,\infty)$. More precisely,
\begin{equation}\label{eq:poissonpde}
 \begin{cases} 
  \lap_{\R^{n+1}} F^1(x,t) \equiv (\partial_{tt} + \lap_x) F^1(x,t) = 0 \quad &\mbox{in $\R^{n+1}_+$,}\\
  \lim\limits_{t \to 0} -\partial_t F^1(x,t) = c\, \laps{1} f(x) \quad &\mbox{on $\R^n$,}\\
  \lim\limits_{t \to 0} F^1(x,t) = f(x) \quad &\mbox{on $\R^n$},\\
  \lim\limits_{|(x,t)| \to \infty} F^1(x,t) = 0.
 \end{cases}
\end{equation}
For $s \in (0,2)$ the generalized Poisson operator $P_t^s$ the function $F(x,t) := P_t^s f(x)$ satisfies 
\begin{equation}\label{eq:spoissonpde}
 \begin{cases} 
  \dv_{\R^{n+1}}(t^{1-s} \nabla F(x,t)) = 0 \quad &\mbox{in $\R^{n+1}_+$,}\\
  \lim\limits_{t \to 0} -t^{1-s} \partial_t F(x,t) = c\, \laps{s} f(x) \quad &\mbox{on $\R^n$,}\\
  \lim\limits_{t \to 0} F(x,t) = f(x) \quad &\mbox{on $\R^n$},\\
   \lim\limits_{|(x,t)| \to \infty} F^s(x,t) = 0.
 \end{cases}
\end{equation}
The operator $P_t^s$ is sometimes called Poisson-Bessel kernel, see Marias \cite{Marias-1987}. The boundary identification $\lim\limits_{t \to 0} -t^{1-s} \partial_t F = c\, \laps{s} f$ is due to Caffarelli and Silvestre~\cite{CaffarelliSilvestre07}.
Here $\laps{s}$ denotes the fractional Laplacian on $\R^n$, defined as the operator with Fourier symbol $c\, |\xi|^s$. More precisely, denote with $\mathcal{F}$, $\mathcal{F}^{-1}$ the Fourier transforms and its inverse, respectively. Then $\laps{s}$ is defined as
\[
 \laps{s} f = \mathcal{F}^{-1}\brac{c\, |\xi|^s \mathcal{F}f}.
\]
Many function spaces involving functions $f: \R^n \to \R$ can be characterized by function spaces on the $\R^{n+1}_+$-function $F^s(x,t)$. Several of those characterizations can be found in Section~\ref{s:poissonchar}. Resulting estimates of $\R^{n+1}_+$-integrals involving Poisson extended functions can be found in Section~\ref{s:bbestimates}.

\section{The Coifman-Lions-Meyer-Semmes estimate}\label{s:CLMSdivcurl}
The estimate \eqref{eq:clmsest} below is the general div-curl estimate\footnote{In \cite{CLMS} it is shown that the div-curl-term belongs to the Hardy-space which by the Hardy-BMO-duality is equivalent to this estimate} that was proven by Coifman-Lions-Meyer-Semmes in \cite{CLMS}. Some generalizations can be found in, e.g., \cite{Strzelecki-2001,Lindberg-2016}. We could not find the intermediate estimate in the literature, although it is known to some experts. Let us also remark, that in \cite{Chanillo-1991}, Chanillo already used the harmonic extension technique to prove a version of this theorem.

\begin{theorem}[Coifman-Lions-Meyer-Semmes]\label{th:clms}
Assume that $f,\varphi \in C_c^\infty(\R^n)$, $g \in C_c^\infty(\R^n,\R^n)$ and 
\[
 \dv (g) = \sum_{i=1}^n \partial_i g_i = 0.
\]
If $1 < p_1,p_2 < \infty$, $1 \leq q_1,q_2 \leq \infty$ and $\frac{1}{p_1} + \frac{1}{p_2} = 1$, $\frac{1}{q_1} + \frac{1}{q_1} = 1$, then
\[
 \int_{\R^n} \sum_{i=1}^n\partial_i f \cdot g_i\, \varphi \aleq [\varphi]_{BMO}\, \|\nabla f\|_{L^{(p_1,q_1)}(\R^n)}\, \|g\|_{L^{(p_2,q_2)}(\R^n)}
\]
Moreover we have the following intermediate estimate. If $s_1 + s_2 + s_3 = 2$, $0 < s_1,s_2,s_3 < 1$, and $1 < p_1,p_2,p_3 < \infty$, $1 \leq q_1,q_2,q_3 \leq \infty$ such that $\frac{1}{p_1} + \frac{1}{p_2} + \frac{1}{p_3} = 1$, $\frac{1}{q_1} + \frac{1}{q_2} + \frac{1}{q_3} = 1$, then
\[
 \int_{\R^n} \sum_{i=1}^n\partial_i f \cdot g_i\ \varphi \aleq \|\laps{s_1} \varphi\|_{L^{(p_1,q_1)}}\, \|\laps{s_2} f\|_{L^{(p_2,q_2)}} \|\lapms{1-s_3} g\|_{L^{(p_3,q_3)}} 
\]
\end{theorem}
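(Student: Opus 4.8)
The plan is to deduce the theorem from the Jacobian scheme of Theorem~\ref{th:CLMSjac}, by first trading the divergence-free constraint on $g$ for an antisymmetric matrix potential and then running the extension / integration-by-parts / trace argument of \eqref{eq:intermediatejacobian} and \eqref{eq:BMOjacobian} almost verbatim. \textbf{Step 1 (matrix potential).} Since $\dv g=0$, set $H_{ij}:=\lap^{-1}(\partial_j g_i-\partial_i g_j)$; then $H$ is antisymmetric and a one-line computation using $\dv g=0$ gives $\sum_j\partial_j H_{ij}=g_i$. Because each $H_{ij}$ equals, up to Riesz transforms, the Riesz potential $\lapms{1}g$, the $L^{(p,q)}$-boundedness of Riesz transforms yields $\|\laps{s_3}H\|_{L^{(p_3,q_3)}}\aleq\|\lapms{1-s_3}g\|_{L^{(p_3,q_3)}}$, and likewise $\|\laph H\|_{L^{(p_2,q_2)}}\aleq\|g\|_{L^{(p_2,q_2)}}$ for the endpoint case.

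\textbf{Step 2 (reduction to a Jacobian-type bulk integrand).} Since symmetric Hessians are annihilated by the antisymmetric $H$, one has $\sum_{i,j}\partial_i f\,\partial_j H_{ij}=\sum_j\partial_j\!\big(\sum_i\partial_i f\,H_{ij}\big)$, so integrating by parts in $x$,
\[
\mathcal C:=\left|\int_{\R^n}\varphi\,\nabla f\cdot g\right|=\left|\int_{\R^n}\sum_{i,j}H_{ij}\,\partial_i f\,\partial_j\varphi\right| ,
\]
which is already antisymmetric in $f\leftrightarrow\varphi$. Let $\Phi=P_t^1\varphi$, $F=P_t^1 f$, $\mathcal H_{ij}=P_t^1H_{ij}$ and $G=P_t^1 g$ be the harmonic extensions, so that $\sum_j\partial_j\mathcal H_{ij}=G_i$ and $\sum_i\partial_i\mathcal H_{ij}=-G_j$. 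Integrating by parts once in $t$ — the boundary term at $t=\infty$ vanishes by Lemma~\ref{pr:pc:maximal}, the one at $t=0$ reproduces $\mathcal C$ — gives
\[
\mathcal C=\left|\int_{\R^{n+1}_+}\partial_t\Big(\sum_{i,j}\mathcal H_{ij}\,\partial_i F\,\partial_j\Phi\Big)\right| .
\]
When $\partial_t$ hits $\mathcal H_{ij}$, all three functions carry exactly one $\R^{n+1}$-gradient. When $\partial_t$ hits $\partial_i F$ or $\partial_j\Phi$, one integrates by parts in $x_i$ resp.\ $x_j$; the term $\sum_{i,j}\mathcal H_{ij}\,\partial_i\partial_j(\cdot)$ that appears dies by antisymmetry of $H$ (this is the ``second compensation''), and what remains is $\sum_j G_j\,\partial_j\Phi\,\partial_t F$ and $-\sum_i G_i\,\partial_i F\,\partial_t\Phi$, with $|G_i|\le|\nabla_{\R^{n+1}}\mathcal H|$; no $x$-boundary terms occur since the $\Phi$- and $F$-factors decay at spatial infinity (Lemma~\ref{pr:pc:maximal}). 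Hence
\[
\mathcal C\aleq\int_{\R^{n+1}_+}|\nabla_{\R^{n+1}}\mathcal H|\,|\nabla_{\R^{n+1}}F|\,|\nabla_{\R^{n+1}}\Phi| .
\]

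\textbf{Step 3 (weights, Hölder, traces).} For the intermediate estimate, factor $1=t^{1-\frac1{p_1}-s_1}\cdot t^{1-\frac1{p_2}-s_2}\cdot t^{1-\frac1{p_3}-s_3}$, which is legitimate precisely because the exponents sum to $3-(s_1+s_2+s_3)-(\frac1{p_1}+\frac1{p_2}+\frac1{p_3})=0$ by hypothesis. H\"older's inequality in the Lorentz scale (using $\sum\frac1{q_i}=1$) followed by the trace identifications of Proposition~\ref{pr:Sobolevspaces} turns the three factors into $\|\laps{s_1}\varphi\|_{L^{(p_1,q_1)}}$, $\|\laps{s_2}f\|_{L^{(p_2,q_2)}}$ and $\|\laps{s_3}H\|_{L^{(p_3,q_3)}}$, and Step~1 bounds the last by $\|\lapms{1-s_3}g\|_{L^{(p_3,q_3)}}$. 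For the $BMO$ endpoint ($s_1=0$, $s_2=s_3=1$) the weights no longer balance, and — exactly as in the proof of \eqref{eq:BMOjacobian} — one integrates by parts once more in $t$ to produce a factor $t$ together with an extra derivative; using harmonicity $\partial_{tt}=-\lap_x$ and again the antisymmetry of $H$ (to kill the symmetric Hessians of $F$ and of $\Phi$), this extra derivative can always be made to land on $F$ or on $\mathcal H$, never on $\Phi$, so that $\mathcal C\aleq\int_{\R^{n+1}_+}t\,|\nabla_{\R^{n+1}}\Phi|\,|\nabla_{\R^{n+1}}\nabla_x F|\,|\nabla_{\R^{n+1}}\mathcal H|$ (plus the companion term carrying $\nabla_x$ on $\mathcal H$). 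Proposition~\ref{pr:BMOusualestimate} with $s=1$ then absorbs $|\nabla_{\R^{n+1}}\Phi|$ against $[\varphi]_{BMO}$, and the remaining factors are controlled by $\|\nabla f\|_{L^{(p_1,q_1)}}$ and $\|\laph H\|_{L^{(p_2,q_2)}}\aleq\|g\|_{L^{(p_2,q_2)}}$.

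\textbf{Expected main obstacle.} The technical heart is the combinatorial bookkeeping of Step~2 and of the $BMO$ part of Step~3: after each integration by parts one must check that no derivative accumulates on $\Phi$ — this is \emph{forced} by the antisymmetry of $H$, but it has to be tracked through all index patterns — and one must match the resulting $t$-weights to the precise hypotheses of Proposition~\ref{pr:BMOusualestimate}. A secondary point is justifying that every integration by parts is free of boundary terms: the potential $H\sim\lapms{1}g$ decays only polynomially even for compactly supported $g$, so one relies on it always being paired with the rapidly decaying harmonic extensions $\Phi$, $F$.
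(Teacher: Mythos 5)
Your Step~1 and Step~2 are essentially the paper's proof in a different dress: the paper takes an $(n-2)$-form potential $h$ with $dh=g$ and works with $dF\wedge dH\wedge d\Phi$ via Stokes' theorem, while you work with the antisymmetric matrix potential $H_{ij}$ and the fundamental theorem of calculus in $t$ plus two integrations by parts in $x$; these two devices are Hodge-dual to each other and yield the same bulk bound $\mathcal C\aleq\int_{\R^{n+1}_+}|\nabla_{\R^{n+1}}\mathcal H||\nabla_{\R^{n+1}}F||\nabla_{\R^{n+1}}\Phi|$. The intermediate estimate and the $BMO$ estimate are then both handled, as in the paper, by trace inequalities. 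So the route is the same in substance.

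There is, however, a real gap in your Step~3, $BMO$ case, and it is not just a matter of bookkeeping. You claim that after the additional integration by parts in $t$ (which produces the weight $t$ and the extra derivative via $\partial_{tt}=-\lap_x$), ``the antisymmetry of $H$ kills the symmetric Hessians of $F$ and of $\Phi$, so the extra derivative can always be made to land on $F$ or $\mathcal H$, never on $\Phi$.'' This is not literally what happens. For example, when $\partial_{tt}$ lands on $\mathcal H_{ij}$, one gets $-\int t\,\lap_x\mathcal H_{ij}\,\partial_iF\,\partial_j\Phi$, and a single integration by parts on $\lap_x=\sum_k\partial_k\partial_k$ produces the term $\int t\sum_{i,j,k}\partial_k\mathcal H_{ij}\,\partial_iF\,\partial_k\partial_j\Phi$, which carries a genuine second-order derivative $\partial_k\partial_j\Phi$, and this is \emph{not} annihilated by antisymmetry of $\mathcal H_{ij}$ (the Hessian indices are $(k,j)$, not $(i,j)$). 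One must integrate by parts a second time, in $x_j$, and then use simultaneously the antisymmetry of $\mathcal H$ (to kill $\sum_{i,j}\partial_k\mathcal H_{ij}\,\partial_j\partial_iF$) and the divergence identity $\sum_j\partial_j\mathcal H_{ij}=G_i$ (to turn $\sum_j\partial_j\partial_k\mathcal H_{ij}$ into $\partial_kG_i$). A similar phenomenon recurs in several of the six Leibniz terms of $\partial_{tt}(\mathcal H_{ij}\partial_iF\partial_j\Phi)$, and in each case one must choose carefully \emph{which} coordinate to integrate by parts in — sometimes the ``free'' index $k$, sometimes the $\Phi$-index $j$ — in order to land on one of the two admissible configurations. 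The argument does close (and in fact the paper's wedge-product formulation is precisely what makes the choices automatic: there $\Phi$ enters $dF\wedge dH\wedge d\Phi$ through a single derivative $\partial_{z_k}\Phi$, so the stray $\partial_t$ on $\Phi$ can only produce $\partial_{z_k}\partial_t\Phi$ with $k\le n$, or $\partial_{tt}\Phi=-\lap_x\Phi$, both of which are pushed off $\Phi$ by one integration by parts). In your coordinate version the bulk integrand $\mathcal H_{ij}\partial_iF\partial_j\Phi$ is not ``balanced'' in this sense, which is why the extra derivatives on $\Phi$ reappear and have to be chased off a second time. So the claim is correct, but the one-line justification is not; this deserves to be spelled out.

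A secondary point: in the intermediate estimate you factor the weight as $1=t^{1-1/p_1-s_1}t^{1-1/p_2-s_2}t^{1-1/p_3-s_3}$, apply H\"older, and cite Proposition~\ref{pr:Sobolevspaces}. That factorization produces the trace seminorms $[\cdot]_{\dot W^{s_i,p_i}}$, which (for $p_i\ne 2$) are \emph{not} the norms $\|\laps{s_i}\cdot\|_{L^{(p_i,q_i)}}$ appearing in Theorem~\ref{th:clms}, and the argument does not carry over to the Lorentz scale $q_i\ne p_i$. The correct black box here is Proposition~\ref{pr:Lpqest} (Cauchy--Schwarz in $t$ on two factors, pointwise maximal-function bound on the third), which is what the paper invokes for \eqref{eq:clmsestinterm}.
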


To prove this, it is convenient to use the language of differential forms. Let $f \in C_c^\infty(\R^n)$ and $g \in C_c^\infty(\R^n,\R^n)$ so that
\[
 \dv (g) = \sum_{i=1}^n \partial_i g^i = 0.
\]
If we interpret $g$ as an $(n-1)$-form, $g \in C_c^\infty(\Ep^{n-1} \R^n)$, by the Poincar\'e Lemma on differential forms, we find an $(n-2)$-form $h \in C^\infty(\Ep^{n-2}\R^n)$ so that
\[
 \sum_{i=1}^n \partial_i f\ g^i = df \wedge dh.
\]
The div-curl estimate by Coifman-Lions-Meyer-Semmes, Theorem~\ref{th:clms} is then equivalent to the following estimate. 

\begin{theorem}\label{th:clms:new}
Let $\ell \in \{0,\ldots,n-2\}$. Assume that $f \in C_c^\infty(\Ep^{\ell} \R^n)$, $h \in C_c^\infty(\Ep^{n-\ell-2} \R^n)$ and $\varphi \in C_c^\infty(\R^n)$. 
If $1 < p_1,p_2 < \infty$, $1 \leq q_1,q_2 \leq \infty$ and $\frac{1}{p_1} + \frac{1}{p_2} = 1$, $\frac{1}{q_1} + \frac{1}{q_1} = 1$, then
\begin{equation}\label{eq:clmsest}
 \int_{\R^n} df \wedge dh\, \varphi \aleq [\varphi]_{BMO}\, \|\nabla f\|_{L^{(p_1,q_1)}(\R^n)}\, \|\nabla h\|_{L^{(p_2,q_2)}(\R^n)}
\end{equation}
Moreover we have the following intermediate estimate. If $s_1 + s_2 + s_3 = 2$, $0 < s_1,s_2,s_3 < 1$, and $1 < p_1,p_2,p_3 < \infty$, $1 \leq q_1,q_2,q_3 \leq \infty$ such that $\frac{1}{p_1} + \frac{1}{p_2} + \frac{1}{p_3} = 1$, $\frac{1}{q_1} + \frac{1}{q_2} + \frac{1}{q_3} = 1$, then
\begin{equation}\label{eq:clmsestinterm}
 \int_{\R^n} df \wedge dh\, \varphi  \aleq \|\laps{s_1} \varphi\|_{L^{(p_1,q_1)}}\, \|\laps{s_2} f\|_{L^{(p_2,q_2)}} \|\laps{s_3} h\|_{L^{(p_3,q_3)}}.
\end{equation}
\end{theorem}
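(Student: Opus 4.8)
The plan is to run the same three-step scheme that produced the Jacobian estimate — harmonic extension, integration by parts, trace theorems — but now in the language of differential forms. First I would set $\Phi := P_t^1\varphi$ and let $F$, $H$ be the forms on $\R^{n+1}_+$ obtained by applying the harmonic (Poisson) extension $P^1_t$ coefficientwise to $f$ and $h$; thus $\Phi$ is a harmonic $0$-form, $F$ a harmonic $\ell$-form and $H$ a harmonic $(n-\ell-2)$-form, all with vanishing data at infinity. Since $\Phi\, dF\wedge dH$ is an $n$-form on $\R^{n+1}_+$ and $d(\Phi\, dF\wedge dH) = d\Phi\wedge dF\wedge dH$ because $d\,dF = d\,dH = 0$, Stokes' theorem — with $\partial\R^{n+1}_+ = \R^n\times\{0\}$, and using the decay estimates of Lemma~\ref{pr:pc:maximal} to kill the contribution at infinity — gives, up to an orientation sign,
\begin{equation}\label{eq:clms:stokes}
 \mathcal{C} := \abs{\int_{\R^n}\varphi\, df\wedge dh} = \abs{\int_{\R^{n+1}_+} d\Phi\wedge dF\wedge dH},
\end{equation}
because the pullback of $\Phi\,dF\wedge dH$ to $\R^n\times\{0\}$ is $\varphi\, d_xf\wedge d_xh$. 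Pointwise $\abs{d\Phi\wedge dF\wedge dH}\aleq \abs{\nabla_{\R^{n+1}}\Phi}\,\abs{\nabla_{\R^{n+1}}F}\,\abs{\nabla_{\R^{n+1}}H}$, with a constant depending only on $n$.

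For the intermediate estimate \eqref{eq:clmsestinterm} this identity is essentially all one needs. From $s_1+s_2+s_3 = 2$ and $\frac1{p_1}+\frac1{p_2}+\frac1{p_3}=1$ the exponents $a_i := 1-\frac1{p_i}-s_i$ sum to zero, so I would insert $1 = t^{a_1}t^{a_2}t^{a_3}$ into \eqref{eq:clms:stokes} and apply H\"older's inequality in its Lorentz-space form on $\R^{n+1}_+$ to bound $\mathcal{C}$ by the product $\vrac{t^{a_1}\nabla_{\R^{n+1}}\Phi}_{L^{(p_1,q_1)}}\vrac{t^{a_2}\nabla_{\R^{n+1}}F}_{L^{(p_2,q_2)}}\vrac{t^{a_3}\nabla_{\R^{n+1}}H}_{L^{(p_3,q_3)}}$ of integrals over $\R^{n+1}_+$. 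The trace characterizations collected in Section~\ref{s:poissonchar} (the Lorentz version of Proposition~\ref{pr:Sobolevspaces}) then identify these three factors with $\vrac{\laps{s_1}\varphi}_{L^{(p_1,q_1)}}$, $\vrac{\laps{s_2}f}_{L^{(p_2,q_2)}}$ and $\vrac{\laps{s_3}h}_{L^{(p_3,q_3)}}$, which is \eqref{eq:clmsestinterm}. (Equivalently one could extend with the $s_i$-adapted Poisson operators $P^{s_i}_t$ from the start, which makes these trace identities immediate; note that the Stokes identity \eqref{eq:clms:stokes} itself never used harmonicity.)

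For the $BMO$-estimate \eqref{eq:clmsest} I would copy the proof of \eqref{eq:BMOjacobian}. Integrating \eqref{eq:clms:stokes} by parts once more in the $t$-direction (again no boundary terms, by Lemma~\ref{pr:pc:maximal}) inserts a weight $t\,\partial_t$ in front of $d\Phi\wedge dF\wedge dH$. If $\partial_t$ hits $dF$ or $dH$, harmonicity of the coefficients gives $\abs{\nabla^2_{\R^{n+1}}F}\aleq\abs{\nabla_{\R^{n+1}}\nabla_x F}$ and likewise for $H$; if it hits $d\Phi$, I would split $\partial_t d\Phi$ into its $dz_k$-part ($k\le n$), integrate by parts in $z_k$, and into its $dt$-part $\partial_{tt}\Phi\, dt = -\sum_{\ell\le n}\partial_{z_\ell z_\ell}\Phi\, dt$, integrating by parts in $z_\ell$. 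Since $k,\ell\le n$ and the harmonic extensions decay in $x$, no boundary terms appear, the $t$-weight is untouched, and by the product rule the freed $x$-derivative always lands on a coefficient of $F$ or of $H$, never on $\Phi$ — this is the second compensation. The upshot is
\[
 \mathcal{C}\aleq \int_{\R^{n+1}_+} t\,\abs{\nabla_{\R^{n+1}}\Phi}\brac{\abs{\nabla_{\R^{n+1}}\nabla_x F}\,\abs{\nabla_{\R^{n+1}}H} + \abs{\nabla_{\R^{n+1}}F}\,\abs{\nabla_{\R^{n+1}}\nabla_x H}}.
\]
Splitting each summand by H\"older's inequality (in the Lorentz scale) and feeding it into the $BMO$ trace inequality for $\Phi$ together with the trace theorems for $F$ and $H$ — precisely as \eqref{eq:detest} is turned into \eqref{eq:BMOjacobian}, via Proposition~\ref{pr:BMOusualestimate} (with $s=1$) and Proposition~\ref{pr:BMOcharacterization} — yields $\mathcal{C}\aleq[\varphi]_{BMO}\,\vrac{\nabla f}_{L^{(p_1,q_1)}}\,\vrac{\nabla h}_{L^{(p_2,q_2)}}$.

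I expect the only genuinely delicate point to be the second compensation: one must check that in \emph{every} term generated by the $t$-integration by parts the extra $x$-derivative can be routed onto $F$ or $H$ and onto nothing involving $\varphi$, since a second derivative on $\varphi$ could never be absorbed by $[\varphi]_{BMO}$. This is a bookkeeping argument, identical in structure to the one behind \eqref{eq:detest} once the determinant is replaced by the wedge product $d\Phi\wedge dF\wedge dH$ and harmonicity is used coefficientwise. Everything else — the Stokes identity, the pointwise bound on the wedge product, H\"older in the Lorentz scale, and the trace and $BMO$ characterizations — is either elementary or quoted as a black box from Sections~\ref{s:poissonchar}--\ref{s:bbestimates}.
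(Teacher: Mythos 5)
Your proposal is correct and follows essentially the same route as the paper's own proof: the Stokes identity after harmonic extension, the pointwise wedge-product bound plus weighted H\"older and trace theorems for the intermediate estimate, and an extra $t$-integration by parts in which the freed $x$-derivative is routed onto $F$ or $H$ via harmonicity and spatial integration by parts, followed by the Carleson/BMO black box, for the endpoint estimate. (As a minor side note, your displayed bound preceding Proposition~\ref{pr:BMOusualestimate} correctly carries the weight $t$, whereas the paper's stated claim \eqref{eq:claimCLMS} appears to have dropped it; the $t$ is needed to feed into the BMO trace inequality.)
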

Let us explain the norms appearing in \eqref{eq:clmsest} and \eqref{eq:clmsestinterm}. Any $\ell$-form $f$ is of the form
\[
 f = \sum_{1 \leq i_1 < \ldots < i_\ell \leq n} f_{i_1,\ldots,i_{\ell}}\ dx^{i_1} \wedge \ldots \wedge dx^{i_\ell}.
\]
We say that $f \in C_c^\infty(\Ep^{\ell} \R^n)$, if $f_{i_1,\ldots,i_{\ell}} \in C_c^\infty(\Ep^{\ell} \R^n)$ for all $1 \leq i_1 < \ldots < i_\ell \leq n$. In this canonical way all function spaces extend to function spaces on $\ell$-forms. In particular,
\[
 \|\nabla f\|_{L^{(p,q)}(\R^n)} := \sum_{1 \leq i_1 < \ldots < i_\ell \leq n} \|
 \nabla f_{i_1,\ldots,i_{\ell}}
 \|_{L^{(p,q)}(\R^n)},
\]
and
\[
 \|\laps{s} f\|_{L^{(p,q)}(\R^n)} := \sum_{1 \leq i_1 < \ldots < i_\ell \leq n} \|
 \laps{s} f_{i_1,\ldots,i_{\ell}}
 \|_{L^{(p,q)}(\R^n)}.
\]
\begin{proof}[Proof of Theorem~\ref{th:clms:new}]
We extend $f,h,\varphi$ harmonically to $\R^{n+1}_+$, that is we let \[\Phi(x,t) := P^1_t \varphi(x),\ F(x,t) := P_t^1 f(x),\ H(x,t) := P_t^1h(x).\] 

By Stokes' theorem on differential forms,
\[
 \mathcal{C} := \left | \int_{\R^{n}} df\wedge dh\, \varphi  \right | = \left | \int_{\R^{n+1}_+} dF\wedge dH \wedge d\Phi  \right |.
 \]
The second claim, the intermediate estimate \eqref{eq:clmsestinterm}, follows from Proposition~\ref{pr:Lpqest} and
\[
 \mathcal{C} \aleq \int_{\R^{n+1}_+} |\nabla_{\R^{n+1}} F|\ |\nabla_{\R^{n+1}} H|\ |\nabla_{\R^{n+1}} \Phi|.
\]
In order to show \eqref{eq:clmsest}, we integrate-by-parts in $t$ (observe the decay as $t\to \infty$, see \eqref{eq:tinfty}),
\[
 \mathcal{C} := \left | \int_{\R^{n+1}_+} t\, \partial_t \brac{dF\wedge dH \wedge d\Phi } \right |.
\]
We claim that
\begin{equation}\label{eq:claimCLMS}
 \mathcal{C} \aleq \int_{\R^{n+1}_+} \brac{|\nabla_x \nabla_{\R^{n+1}_+} F|\, |\nabla_{\R^{n+1}_+} H| + |\nabla_{\R^{n+1}_+} F|\, |\nabla_x \nabla_{\R^{n+1}_+} H|}\ |\nabla_{\R^{n+1}} \Phi|.
\end{equation}
For this, renaming the coordinates on $\R^{n+1}_+$ from $(x_1,\ldots,x_n,t)$ to $(z_1,\ldots,z_{n+1})$
\[
 df\wedge dh \wedge d\varphi = \sum_{i,j,k=1}^{n+1} \sum_{I,J}\partial_{z_i} F_{I}\ \partial_{z_j} H_{J}\    \partial_{z_k} \Phi\ dz^i \wedge dz^I \wedge \ dz^j \wedge dz^J \wedge dz^k,
\]
where the sum is over all multiindices $I$ and $J$ which are of the form $I = (i_1, i_2, \ldots, i_\ell)$ for some $i_1 < i_2 < \ldots < i_\ell$, and $J = (j_1, j_2, \ldots, j_{n-2-\ell})$ for some $j_1 < j_2 < \ldots < j_{n-2-\ell}$.

Consequently, 
\[
\begin{split}
 \partial_{t}(dF\wedge dH \wedge d\Phi) 
 =& \sum_{i,j,k=1}^{n+1} \sum_{I,J} \partial_{z_{n+1}} \brac{\partial_{z_i} F_{I}\ \partial_{z_j} H_{J}}\    \partial_{z_k} \Phi\ dz^i \wedge dz^I \wedge \ dz^j \wedge dz^J \wedge dz^k\\
  &+ \sum_{i,j}^{n+1} \sum_{k=1}^n \sum_{I,J} \partial_{z_i} F_{I}\ \partial_{z_j} H_{J}\    \partial_{z_k} \partial_{z_{n+1}} \Phi\ dz^i \wedge dz^I \wedge \ dz^j \wedge dz^J \wedge dz^k\\
  &+ \sum_{i,j}^{n+1} \sum_{I,J} \partial_{z_i} F_{I}\ \partial_{z_j} H_{J}\    \partial_{z_{n+1}} \partial_{z_{n+1}} \Phi\ dz^i \wedge dz^I \wedge \ dz^j \wedge dz^J \wedge dz^{n+1}\\
  =:& I + II + III.
\end{split}
 \]
Observing that $\partial_{tt}F_{I} \equiv \partial_{z_{n+1}z_{n+1}}F_{I}= - \sum_{\ell = 1}^n \partial_{z_\ell}\partial_{z_\ell} F_{I}$ and likewise for $H$, for the first term $I$ we have
\[
 \left | \int_{\R^{n+1}_+} I \right |\aleq \int_{\R^{n+1}_+} \brac{|\nabla_x \nabla_{\R^{n+1}_+} F|\, |\nabla_{\R^{n+1}_+} H| + |\nabla_{\R^{n+1}_+} F|\, |\nabla_x \nabla_{\R^{n+1}_+} H|}\ |\nabla_{\R^{n+1}} \Phi|.
\]
As for the second term $II$, since $k < n+1$ the variable $z_k = x_k$, and we can integrate by parts in $z_k$,
\[
\begin{split}
 \left |\int_{\R^{n+1}_+} II \right | =& \left |\int_{\R^{n+1}_+}\sum_{i,j}^{n+1} \sum_{k=1}^n \sum_{I,J} \partial_{z_k} \brac{\partial_{z_i} F_{I}\ \partial_{z_j} H_{J}}\    \partial_{z_{n+1}} \Phi\ dz^i \wedge dz^I \wedge \ dz^j \wedge dz^J \wedge dz^k\right |\\
  \aleq &\int_{\R^{n+1}_+} \brac{|\nabla_x \nabla_{\R^{n+1}_+} F|\, |\nabla_{\R^{n+1}_+} H| + |\nabla_{\R^{n+1}_+} F|\, |\nabla_x \nabla_{\R^{n+1}_+} H|}\ |\partial_t  \Phi|.
\end{split}
 \]
Finally, for $III$, again by harmonicity of $\Phi$ we have $\partial_{z_{n+1}}\partial_{z_{n+1}}\Phi \equiv \partial_{tt}\Phi = - \sum_{\ell = 1}^n \partial_{z_\ell}\partial_{z_\ell}  \Phi$, and thus
\[
\begin{split}
 \left |\int_{\R^{n+1}_+} III \right | \leq &  \sum_{\ell = 1}^n \left |\int_{\R^{n+1}_+}\sum_{i,j}^{n+1} \sum_{k=1}^n \sum_{I,J} \partial_{z_\ell}\brac{\partial_{z_i} F_{I}\ \partial_{z_j} H_{J}}\    \partial_{z_\ell}\Phi\ dz^i \wedge dz^I \wedge \ dz^j \wedge dz^J \wedge dz^k\right |\\
  \aleq &\int_{\R^{n+1}_+} \brac{|\nabla_x \nabla_{\R^{n+1}_+} F|\, |\nabla_{\R^{n+1}_+} H| + |\nabla_{\R^{n+1}_+} F|\, |\nabla_x \nabla_{\R^{n+1}_+} H|}\ |\nabla_{x} \Phi|.
\end{split}
 \]
Consequently we have shown \eqref{eq:claimCLMS} and Theorem~\ref{th:clms:new} is proven.
\end{proof}

\section{The Coifman-Rochberg-Weiss Commutator}\label{s:CRW}
We turn to the Coifman-Rochberg-Weiss theorem \cite{Coifman-Rochberg-Weiss-1976}, more precisely the upper bound. We only prove it for Riesz transforms $(\Rz_i)_{i=1}^n$ acting on functions in $\R^n$, while the theorem is actually true for all Calder\'on-Zygmund operators. Recall that the Riesz transforms are defined as $\Rz_i = \partial_i \lapms{1}$. 

\begin{theorem}[Coifman-Rochberg-Weiss \cite{Coifman-Rochberg-Weiss-1976}]\label{th:CRW}
For any smooth and compactly supported $f,g \in C_c^\infty(\R^n)$ and any $i = 1,\ldots, n$ we define the commutator 
\[
 [\Rz_i,\varphi](g) := \Rz_i (\varphi g) - \varphi \Rz_i(g).
\]
Then, with constants depending only on $p$ and the dimension,
\begin{equation}\label{eq:CRWestimate}
 \left \|[\Rz_i,\varphi](g) \right \|_{L^p(\R^n)} \aleq [\varphi]_{BMO}\ \|g\|_{L^p(\R^n)}
\end{equation}
\end{theorem}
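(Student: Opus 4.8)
The plan is to dualize \eqref{eq:CRWestimate} and then run the harmonic-extension scheme of Theorems~\ref{th:CLMSjac} and~\ref{th:clms:new}. Since the Riesz transform $\Rz_i$ is skew-adjoint on $L^2(\R^n)$, for any $h\in C_c^\infty(\R^n)$ one has
\[
 \int_{\R^n} [\Rz_i,\varphi](g)\, h = -\int_{\R^n} \varphi\, \brac{g\,\Rz_i h + h\,\Rz_i g},
\]
so, using $L^p$--$L^{p'}$ duality ($1/p+1/p'=1$, $1<p<\infty$), it suffices to prove the trilinear estimate
\[
 \abs{\int_{\R^n} \varphi\, \brac{g\,\Rz_i h + h\,\Rz_i g}} \aleq [\varphi]_{BMO}\, \|g\|_{L^p(\R^n)}\, \|h\|_{L^{p'}(\R^n)}.
\]

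For the extensions I would take $\Phi := P_t^1\varphi$ together with $W := P_t^1(\lapms{1}g)$ and $V := P_t^1(\lapms{1}h)$, i.e. the harmonic extensions of the Riesz potentials $\lapms{1}g=(-\lap)^{-1/2}g$ and $\lapms{1}h$ (normalizing the constant in \eqref{eq:poissonpde} to $1$). By \eqref{eq:poissonpde} one has, on $\R^n\times\{0\}$, $\partial_{x_j}W=\Rz_j g$, $-\partial_t W=g$ and $\partial_{x_j}V=\Rz_j h$, $-\partial_t V=h$; in particular only the gradients $\nabla_{\R^{n+1}} W,\nabla_{\R^{n+1}} V$ (never $W,V$ themselves) will ever appear, so there is no issue with $\lapms{1}g$ in low dimensions. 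The cancellation now comes from a product rule: for the vector field on $\R^{n+1}_+$ with components $\mathcal{V}_k := \partial_{z_k}W\,\partial_{z_i}V + \partial_{z_i}W\,\partial_{z_k}V$ (writing $z=(x_1,\dots,x_n,t)$, $z_{n+1}=t$, and $i$ the fixed index of the commutator, so $z_i=x_i$ is tangential), harmonicity $\lap_{\R^{n+1}} W=\lap_{\R^{n+1}} V=0$ gives
\[
 \dv_{\R^{n+1}}\mathcal{V} = \partial_{z_i}\brac{\nabla_{\R^{n+1}} W\cdot\nabla_{\R^{n+1}} V},
\]
while the $t$-component $\mathcal{V}_{n+1}$ has boundary value $-(g\,\Rz_i h + h\,\Rz_i g)$. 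Hence, by the divergence theorem on $\R^{n+1}_+$ (no contribution at infinity, by Lemma~\ref{pr:pc:maximal} and \eqref{eq:tinfty}) followed by one integration by parts in $z_i=x_i$,
\[
 \int_{\R^n} \varphi\, \brac{g\,\Rz_i h + h\,\Rz_i g} = \int_{\R^{n+1}_+} \nabla_{\R^{n+1}}\Phi\cdot\mathcal{V}\ -\ \int_{\R^{n+1}_+} \partial_{x_i}\Phi\;\brac{\nabla_{\R^{n+1}} W\cdot\nabla_{\R^{n+1}} V}
\]
(up to the universal constant and signs, which are irrelevant for the absolute-value estimate). Combined with Proposition~\ref{pr:Sobolevspaces} this identity already yields the corresponding intermediate estimates, with exponents forced by $s_0+s_1+s_2=2$ and $\frac1{p_0}+\frac1{p_1}+\frac1{p_2}=1$.

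So far the integrand carries just one derivative on each of $\Phi,W,V$ and no $t$-weight; as in the proof of \eqref{eq:BMOjacobian} (the passage from \eqref{eq:CLMSBMO:pt} to \eqref{eq:detest}) I would integrate by parts once more in $t$ to produce the weight $t$, and then exploit harmonicity to force the new $\partial_t$ onto $W$ or $V$ rather than onto $\Phi$: whenever $\partial_t$ hits $\nabla_{\R^{n+1}}\Phi$ or $\partial_{x_i}\Phi$, rewrite $\partial_{tt}\Phi=-\lap_x\Phi$ and integrate by parts in the resulting tangential directions, so that $\Phi$ keeps exactly one derivative and the ``second compensation'' derivative, in $x$, lands on $W$ or $V$. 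This should yield
\[
 \abs{\int_{\R^n} [\Rz_i,\varphi](g)\, h} \aleq \int_{\R^{n+1}_+} t\, \abs{\nabla_{\R^{n+1}}\Phi}\, \brac{\abs{\nabla_x\nabla_{\R^{n+1}} W}\,\abs{\nabla_{\R^{n+1}} V} + \abs{\nabla_{\R^{n+1}} W}\,\abs{\nabla_x\nabla_{\R^{n+1}} V}},
\]
to which Proposition~\ref{pr:BMOusualestimate} with $s=1$ applies term by term, the two non-$\Phi$ slots being filled by $W$ and $V$; since $\laph(\lapms{1}g)=g$ and $\laph(\lapms{1}h)=h$, the right-hand side is exactly $[\varphi]_{BMO}\,\|g\|_{L^p}\,\|h\|_{L^{p'}}$, and \eqref{eq:CRWestimate} follows.

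The main obstacle is the bookkeeping in this last $t$-integration-by-parts step: it must be organized so that no second derivative is ever left on $\Phi$ (which would destroy the $BMO$-norm) and so that what remains matches precisely the hypotheses of Proposition~\ref{pr:BMOusualestimate} — exactly the subtlety already encountered in going from \eqref{eq:CLMSBMO:pt} to \eqref{eq:detest}. The only other point requiring care is the justification that all the integrations by parts (in $t$, in the tangential variables, and the divergence theorem) produce no stray boundary terms, which follows from the decay of harmonic extensions of $C_c^\infty$ functions recorded in Lemma~\ref{pr:pc:maximal} and \eqref{eq:tinfty}. All genuinely harmonic-analytic content is then hidden in Proposition~\ref{pr:BMOusualestimate}, used as a black box.
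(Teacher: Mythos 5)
Your argument is correct, and the top-level strategy matches the paper's: dualize, extend harmonically, integrate by parts in $t$ and tangentially while using $\partial_{tt}=-\lap_x$ so that $\Phi$ never carries two derivatives, then invoke Proposition~\ref{pr:BMOusualestimate} with $s=1$, $\ell=1$. The genuinely different ingredient is how you organize the cancellation. The paper introduces the auxiliary object $\tilde{\Rz}_i[F]:=P_t^1(\Rz_i f)$, records the identities $\partial_t\tilde{\Rz}_i F=-\partial_{x_i}F$, $\partial_{tt}\tilde{\Rz}_iF=-\partial_t\partial_{x_i}F$, etc.\ (equations \eqref{eq:ptrz1}--\eqref{eq:ptrz3}), and then expands $\partial_{tt}\bigl(\Phi\,F\,\tilde{\Rz}_i[G]+\Phi\,\tilde{\Rz}_i[F]\,G\bigr)$ by hand, tracking two ``cancellation effects'' term by term. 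You instead extend $\lapms{1}g$ and $\lapms{1}h$, so that the single gradient $\nabla_{\R^{n+1}}W=(\tilde{\Rz}_1[G],\dots,\tilde{\Rz}_n[G],-G)$ carries all of that structure, and you package both cancellations at once in the divergence identity $\dv_{\R^{n+1}}\mathcal{V}=\partial_{z_i}(\nabla_{\R^{n+1}}W\cdot\nabla_{\R^{n+1}}V)$, a one-line consequence of harmonicity, followed by a single further $t$-integration by parts to produce the weight. This is a cleaner repackaging of the same computation: it makes the parallel with the Jacobian proof (the passage from \eqref{eq:CLMSBMO:pt} to \eqref{eq:detest}) transparent, and the paper's rules \eqref{eq:ptrz1}--\eqref{eq:ptrz3} become the trivial observations $\partial_{x_j}W=\tilde{\Rz}_j[G]$ and $\partial_t W=-G$. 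The two small points you flag both work out: the decay at infinity justifying the divergence theorem is no worse than in the paper's proof, since $\tilde{\Rz}_i[G]$ is likewise the Poisson extension of a non-$L^1$ function; and the bookkeeping that keeps exactly one derivative on $\Phi$ goes through exactly as in the passage from \eqref{eq:CLMSBMO:pt} to \eqref{eq:detest}, yielding $\aleq\int_{\R^{n+1}_+}t\,|\nabla_{\R^{n+1}}\Phi|\bigl(|\nabla_x\nabla_{\R^{n+1}}W||\nabla_{\R^{n+1}}V|+|\nabla_{\R^{n+1}}W||\nabla_x\nabla_{\R^{n+1}}V|\bigr)$, after which Proposition~\ref{pr:BMOusualestimate} and $\laph\lapms{1}=\mathrm{id}$ give $[\varphi]_{BMO}\|g\|_{L^p}\|h\|_{L^{p'}}$ as required.
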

From the proof below, one can also obtain intermediate estimates, see Theorem~\ref{th:CMcIM}.

Actually, Coifman-Rochberg-Weiss \cite{Coifman-Rochberg-Weiss-1976} prove also the converse: if for some $p \in (1,\infty)$ \[
\left \| [\Rz_i,\varphi](g) \right \|_{L^p(\R^n)} \leq C\, \|g\|_{L^p(\R^n)}  \quad \forall i \in \{1,\ldots,n\},\ g \in L^p(\R^n),
                                                                                                         \]
then actually $\varphi \in BMO$.

Let us remark on the relation between Theorem~\ref{th:CRW} and Theorem~\ref{th:clms}. Jacobian estimates and div-curl estimates are special cases of the Coifman-Rochberg-Weiss commutator theorem. Indeed, let us illustrate this for the two-dimensional situation: take $u: \R^2 \to \R^2$ and consider the Jacobian $\det (\nabla u^1,\nabla u^2)$. The following facts are important: 
\[\Rz_2 \partial_1 f = \Rz_1 \partial_2 f\] and 
\[\int_{\R^n} g\, \Rz_i f = - \int_{\R^n} \Rz_ig\, f.\] 
Then, since $\partial_i = \Rz_i \circ \laps{1}$,
\[
\begin{split}
 &\int_{\R^2} \varphi\, \det (\nabla u^1, \nabla u^2)\\  
 =&\int_{\R^2} \varphi\, \brac{ \Rz_1 \laps{1}u^1\ \partial_2 u^2 - \Rz_2 \laps{1} u^1\ \partial_1 u^2}\\    
 =&-\int_{\R^2} \laps{1}u^1\ \brac{[\Rz_1,\varphi](\partial_2 u^2) - [\Rz_2,\varphi](\partial_1 u^2) }.
\end{split}
 \]
Similar arguments hold for determinants and div-curl products of any dimension. Thus indeed, as was discovered in \cite{CLMS}, Theorem~\ref{th:clms} is a special case of Theorem~\ref{th:CRW}.

\begin{proof}[Proof of Theorem~\ref{th:CRW}]
As before, let $\Phi$, $F$, and $G$ denote the harmonic extension to $\R^{n+1}_+$ of $\varphi$, $f$, $g$, respectively. 

With an abuse of notation we write $\tilde{\Rz}_i[F](x,t) := P^1_t \Rz_i f$. A word of warning: this object $\tilde{\Rz}_i$ acting on $\R^{n+1}$ is \emph{not} the actual Riesz transform on $\R^{n+1}$. Indeed its symbol $\sigma(\tilde{\Rz}_i)(\xi,t)$ is $\xi_i/|\xi|$ as opposed to the symbol of the $\R^{n+1}$-Riesz transform $\xi_i/(\sqrt{|\xi|^2 + t^2})$. Thus $\tilde{\Rz}_i$ is not even a H\"ormander-type multiplier operator on $\R^{n+1}$ (those multipliers are continuous away from the origin), but a rough Marcinkiewicz-multiplier (multipliers which are possibly singular at the coordinate axes).

We use the integration-by-parts formula in $t$, using the decay of the harmonic extensions from Lemma~\ref{pr:pc:maximal}. Then we have
\[
   \mathcal{C} := \left |\int_{\R^n} \varphi\, f\, \Rz_i[g] + \varphi\, \Rz_i[f]\, g \right |
   =\left |\int_{\R^{n+1}_+} t \partial_{tt} \brac{\Phi\, F\, \tilde{\Rz}_i[G] + \Phi\, \tilde{\Rz}_i[F] \, G }\right |.
\]
We claim that
\begin{equation}\label{eq:CRWwish}
 \mathcal{C} \aleq \max_{\tilde{F} \in \{F,\, \tilde{\Rz}_i F\}} \max_{\tilde{G} \in \{G,\, \tilde{\Rz}_i G\}}	\int_{\R^{n+1}_+} t |\nabla_{\R^{n+1}} \Phi|\, \brac{|\tilde{F}| |\nabla_{\R^{n+1}} \tilde{G}| + |\tilde{G}| |\nabla_{\R^{n+1}} \tilde{F}|}
\end{equation}
In words: one derivative hits $\Phi$, the other one hits $F$ or $G$. 

Once we have \eqref{eq:CRWwish}, Proposition~\ref{pr:BMOusualestimate} implies
\[
 \mathcal{C} \aleq [\varphi]_{BMO}\  \max_{\tilde{f} \in \{f,\, \Rz_i f\}} \max_{\tilde{g} \in \{g,\, \Rz_i g\}} \|\tilde{g}\|_{L^p(\R^n)}\ \|\tilde{f}\|_{L^{p'}(\R^n)} \aleq [\varphi]_{BMO}\  \|g\|_{L^p(\R^n)}\ \|f\|_{L^{p'}(\R^n)},
\]
the last inequality is the boundedness of Riesz transforms on $L^p(\R^n)$ for any $1 < p < \infty$. This estimate implies \eqref{eq:CRWestimate} by duality.

Now we establish \eqref{eq:CRWwish}. Computing the derivatives $\partial_{tt}$ we have three terms to consider: Firstly, the term
\[
\mathcal{C}_1 := \left | \int_{\R^{n+1}_+} t \brac{\partial_t\Phi\ \partial_t (F\, \tilde{\Rz}_i[G]) + \partial_t \Phi\ \partial_t (\tilde{\Rz}_i[F]\, G) } \right |
\]
can directly be estimated as in \eqref{eq:CRWwish}. Secondly, since $\partial_{tt} \Phi = -\lap_x \Phi = -\nabla_x \cdot \nabla_x \Phi$, an integration by parts in $x$-direction
\[
\mathcal{C}_2 :=  \left |\int_{\R^{n+1}_+} t \brac{\partial_{tt}\Phi\ (F\, \tilde{\Rz}_i[G] + \tilde{\Rz}_i[F]\, G) } \right |=\left |\int_{\R^{n+1}_+} t \brac{\nabla_x\Phi\ \cdot \nabla_x(F\, \tilde{\Rz}_i[G] + \tilde{\Rz}_i[F]\, G) } \right |,
\]
which again can estimated as required for \eqref{eq:CRWwish}.

Finally, it remains to find an estimate of the form \eqref{eq:CRWwish} for 
\begin{equation}\label{eq:CRWIII}
 \mathcal{C}_3 := \left |\int_{\R^{n+1}_+} t\, \Phi\ \partial_{tt} \brac{F\, \tilde{\Rz}_i[G] + \tilde{\Rz}_i[F]\, G } \right |.
\end{equation}

For this we need some rules on the interplay of Riesz transform and derivatives. Those can be computed, e.g., from the exponential representation of the Poisson potential, \[F(x,t) = \tilde{c}\, e^{-t\sqrt{-\lap}} f.\] For some constant $c \in \R$,
\begin{equation}\label{eq:ptrz1}
 \partial_t \tilde{\Rz}_i F = -c\, \partial_{x_i} F,
\end{equation}
\begin{equation}\label{eq:ptrz2}
 \lap \tilde{\Rz}_i F = c\, \partial_t \partial_{x_i} F,
\end{equation}
and
\begin{equation}\label{eq:ptrz3}
 \partial_{tt}\tilde{\Rz}_i F = -c\, \partial_{t} \partial_{x_i}  F.
\end{equation}
For sake of overview we may assume (by renormalizing $\tilde{\Rz}_i$) that $c = 1$.

One cancellation effect for the estimate \eqref{eq:CRWIII} appears here:
\[
 \partial_{t} \brac{F\partial_{t} \tilde{\Rz}_i[G] + \partial_{t} \tilde{\Rz}_i[F]\, G} = -\partial_{x_i} \partial_t (FG).
\]
Moreover, using harmonicity, $\partial_{tt} F = -\lap_x F$, (observe that everything commutes with $\tilde{\Rz}_{i}$)
\[
\begin{split}
 \partial_{t} \brac{\partial_{t}F\, \tilde{\Rz}_i[G] +  \tilde{\Rz}_i[F]\, \partial_{t}G}
 =& -\lap_x F\, \tilde{\Rz}_i[G] -  \tilde{\Rz}_i[F]\, \lap_x G\\
 &+\lap_x \tilde{\Rz}_i[F]\, G + F\, \lap_x \tilde{\Rz}_i[G]\\
 &-\partial_{x_i}\brac{\partial_{t}F\, G + F\, \partial_{t}G},
\end{split}
\]
and with a second cancellation effect
\[
\begin{split}
 =& -\nabla_x \cdot \brac{\nabla_x F\, \tilde{\Rz}_i[G] -  \tilde{\Rz}_i[F]\, \nabla_x G}\\
 &+\nabla_x\cdot \brac{\nabla_x \tilde{\Rz}_i[F]\, G + F\, \nabla_x \tilde{\Rz}_i[G]}\\
 &-\partial_{x_i}\brac{\partial_{t}F\, G + F\, \partial_{t}G}.
\end{split}
 \]
Thus, we have shown that
\[
\begin{split}
\partial_{tt} \brac{F\, \tilde{\Rz}_i[G] + \tilde{\Rz}_i[F]\, G }
=&-\partial_{x_i} \partial_t (FG)\\
 & -\nabla_x \cdot \brac{\nabla_x F\, \tilde{\Rz}_i[G] -  \tilde{\Rz}_i[F]\, \nabla_x G}\\
 &+\nabla_x\cdot \brac{\nabla_x \tilde{\Rz}_i[F]\, G + F\, \nabla_x \tilde{\Rz}_i[G]}\\
 &-\partial_{x_i}\brac{\partial_{t}F\, G + F\, \partial_{t}G}.
\end{split}
\]
Plugging this into \eqref{eq:CRWIII} and performing an integration by parts in $x$-direction (no boundary terms appear in $x$-direction), we see the estimate of the form \eqref{eq:CRWwish}.
\end{proof}

\section{Chanillo-type commutator of Riesz Potentials}\label{s:Chanillo}
For $s \in (0,1)$ we also obtain an extension of the results of Coifman-Rochberg-Weiss to Riesz potentials. In \cite{Chanillo-1982}, Chanillo showed the following theorem on commutators of Riesz potential and pointwise multiplication,
\[
 [\lapms{s},\varphi](f) := \lapms{s} (\varphi f) - \varphi \lapms{s} f.
\]

\begin{theorem}[Chanillo]\label{th:chanillo}
Let $s \in (0,n)$ then for any $f$, $\varphi \in C_c^\infty(\R^n)$,
\[
 \|[\lapms{s},\varphi](f)\|_{L^q(\R^n)} \aleq [\varphi]_{BMO}\ \|f\|_{L^p(\R^n)},
\]
where $1 < p < \frac{n}{s}$ and
\begin{equation}\label{eq:chanillopq}
 \frac{1}{q} = \frac{1}{p}-\frac{s}{n}.
\end{equation}
\end{theorem}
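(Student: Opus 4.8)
\textbf{Plan of proof for Theorem~\ref{th:chanillo} (Chanillo's commutator estimate).}

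The strategy follows the same template as the previous proofs: pass to harmonic (or Poisson-Bessel) extensions, rewrite the commutator as an $\R^{n+1}_+$-integral where derivatives are distributed by a product rule, exploit the cancellation coming from the classical Leibniz rule, and close with trace theorems. Concretely, by duality it suffices to bound $\mathcal{C} := |\int_{\R^n} h\, [\lapms{s},\varphi](f)|$ for $h \in C_c^\infty(\R^n)$ with $\|h\|_{L^{q'}} \le 1$, where $\frac1{q'} = 1 - \frac1p + \frac{s}{n}$. Using self-adjointness of $\lapms{s}$, we have $\mathcal{C} = |\int_{\R^n} \varphi\, (f\, \lapms{s} h - h\, \lapms{s} f)|$. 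Let $\Phi := P_t^1\varphi$ be the harmonic extension, and let $F, H$ be the extensions of $f, h$ adapted to the Riesz potential; the natural choice (cf. \eqref{eq:spoissonpde} and the Caffarelli--Silvestre picture) is to take $F := P_t^{1} f$ and represent $\lapms{s} f$ via the extension $\tilde F(x,t) := P_t^{?} (\lapms{s} f)$, using that $\lapms{s}$ commutes with the Poisson semigroup and that $\partial_t$ acting on Poisson extensions reproduces (up to constants and $t$-powers) fractional derivatives. The key algebraic identities are the analogues of \eqref{eq:ptrz1}--\eqref{eq:ptrz3}: for the Poisson semigroup $e^{-t\sqrt{-\lap}}$ one has $\partial_t F = -\sqrt{-\lap}\,F$, hence $\lapms{s}$-type operators can be traded for $t$-integrals of $\partial_t$, which is exactly the mechanism that turns the order-$s$ nonlocal operator into something local on $\R^{n+1}_+$.

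\textbf{Main steps.} First, I would set up the extension identity: write $\int_{\R^n}\varphi(f\,\lapms{s}h - h\,\lapms{s}f)$ as an integral over $\R^{n+1}_+$ of a product involving $\Phi$, $F$, $H$ and one extra $\partial_t$ (integration by parts in $t$, with vanishing boundary terms justified by Lemma~\ref{pr:pc:maximal} and the decay \eqref{eq:tinfty}). Second — and this is the crucial cancellation — I expand $\partial_{tt}$ (or the relevant $t$-derivative combination) on the product $F\,\widetilde{(\lapms{s}H)} - \widetilde{(\lapms{s}F)}\,H$; because $\lapms{s}$ on the extension side is realized by a $t$-derivative/$t$-weight operation that obeys the classical product rule, the terms where the "fractional derivative" lands entirely on $\Phi$'s partner cancel, leaving only terms in which at least one derivative hits $\Phi$. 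Third, using harmonicity $\partial_{tt}\Phi = -\lap_x\Phi$ and integration by parts in the $x_1,\dots,x_n$ directions (no boundary terms since those are genuine $\R^n$-directions and the extensions decay), I move that stray $x$-derivative off $\Phi$, arriving at an estimate of the schematic form
\[
 \mathcal{C} \aleq \int_{\R^{n+1}_+} t^{\alpha}\,|\nabla_{\R^{n+1}}\Phi|\;\big(|\nabla_{\R^{n+1}} F|\,|\widetilde H| + |F|\,|\nabla_{\R^{n+1}}\widetilde H| + (\text{symmetric in }F,H)\big),
\]
with the $t$-power $\alpha$ dictated by the Poisson-Bessel weight for order $s$. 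Fourth, apply Hölder in $(x,t)$ with exponents matched to $\frac1q = \frac1p - \frac sn$ (equivalently $\frac1{p} + \frac1{q'} - \frac sn = 1$), and invoke the trace characterizations: $[\varphi]_{BMO}$ controls the Carleson-measure factor involving $t\,|\nabla\Phi|^2$ (Proposition~\ref{pr:BMOusualestimate}), while the $F$- and $H$-factors reduce to $\|f\|_{L^p}$ and $\|h\|_{L^{q'}}$ via the $L^p$-trace estimates for Poisson and Poisson-Bessel extensions from Section~\ref{s:poissonchar}, using the Hardy--Littlewood--Sobolev bookkeeping $\frac1q = \frac1p - \frac sn$ to absorb the $t^\alpha$ weight.

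\textbf{Expected main obstacle.} The delicate point is the second step: getting the product rule for $\lapms{s}$ \emph{on the extension} to produce an exact cancellation. On $\R^n$ the operator $\lapms{s}$ has no clean Leibniz rule (that is precisely why the commutator is nontrivial), so the cancellation must be manufactured on the $\R^{n+1}_+$ side by representing $\lapms{s}f$ through $\partial_t$ or $t^{1-\sigma}\partial_t$ acting on an extension and then using that \emph{this} operation is a first-order differential operator in $t$ and hence does obey the product rule — but one must check that the two extensions ($F$ carrying $f$ and $\widetilde H$ carrying $\lapms{s}h$) are compatible, i.e. are solutions of compatible degenerate-elliptic equations so that the "extra $\partial_t$" genuinely distributes. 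A secondary technical nuisance is bookkeeping the $t$-weights: since $s<1$ one stays within the range $(0,2)$ covered by \eqref{eq:spoissonpde}, so the trace theorems apply, but one must verify the weighted Hölder exponents close exactly with \eqref{eq:chanillopq} and that no boundary terms at $t=0$ or $t=\infty$ are lost when integrating by parts against the $t^\alpha$ weight (again reducible to Lemma~\ref{pr:pc:maximal}-type decay). Apart from this, the argument is a direct transcription of the Coifman--Rochberg--Weiss proof above with $\Rz_i$ replaced by $\lapms{s}$ and the ordinary Poisson kernel replaced, where needed, by the Poisson--Bessel kernel.
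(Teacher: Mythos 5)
Your overall template is the right one, and you correctly reduce by duality and self-adjointness to the estimate
$\mathcal{C} = \bigl|\int_{\R^n}\varphi\,(f\,\lapms{s}h - h\,\lapms{s}f)\bigr|$. You also correctly flag, in your ``expected main obstacle'' paragraph, exactly where the difficulty lies: you need the Riesz potential $\lapms{s}$ to become a first-order-in-$t$ operation on the extension side so the classical product rule produces the cancellation. But you never actually resolve this, and the device you reach for --- representing $\lapms{s}f$ as a $t$-derivative or $t^{1-\sigma}\partial_t$ of a Poisson extension --- does not work. On the extension, $-\partial_t F^1 = \laps{1}F^1$ and $-t^{1-s}\partial_t F^s\big|_{t=0} = c\,\laps{s}f$: $t$-derivatives of Poisson-type extensions realize the \emph{fractional Laplacian} $\laps{s}$ (a positive-order operator), not the \emph{Riesz potential} $\lapms{s}$ (a negative-order, smoothing operator). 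There is no first-order-in-$t$ operation on an extension whose trace is the Riesz potential, so your $\widetilde{(\lapms{s}F)}$ has no clean local realization and the cancellation you hope for is not available in the form you write it. Your $P_t^{?}$ placeholder is a symptom of this unresolved point, and the proposal as written does not close.

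The paper's resolution, which is the key idea you are missing, is a simple substitution \emph{before} extending: set $u := \lapms{s}f$ and $v := \lapms{s}h$, so that $\mathcal{C} = \bigl|\int_{\R^n}(\laps{s}u\,v - u\,\laps{s}v)\,\varphi\bigr|$ with $\|\laps{s}u\|_{L^p} = \|f\|_{L^p}$ and $\|\laps{s}v\|_{L^{q'}} = \|h\|_{L^{q'}}$. Now \emph{only} the fractional Laplacian appears, and one takes the Caffarelli--Silvestre extension $U = P_t^s u$, $V = P_t^s v$, $\Phi = P_t^s\varphi$, all with the \emph{same} weight $t^{1-s}$. The cancellation is then literal: $\partial_t\bigl(t^{1-s}(\partial_t U\,V - U\,\partial_t V)\bigr) = -t^{1-s}\nabla_x\cdot(\nabla_x U\,V - U\,\nabla_x V)$, using the product rule for $\partial_t$ (the cross terms $t^{1-s}\partial_t U\,\partial_t V$ cancel) together with $\partial_t(t^{1-s}\partial_t U) = -t^{1-s}\lap_x U$. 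After a second $t$-integration by parts on the remaining term (needed because $\partial_t$ against the BMO black box only gives $\laps{\nu}u$ for $\nu < s$), one lands in the setting of Proposition~\ref{pr:BMOusualestimate}, and the exponent $\frac1q=\frac1p-\frac sn$ enters only at the very end through the Sobolev embeddings $\|v\|_{L^{p'}}\aleq\|\laps{s}v\|_{L^{q'}}$ and $\|u\|_{L^q}\aleq\|\laps{s}u\|_{L^p}$ --- not, as in your step four, through a weighted H\"older in $(x,t)$. Finally, note the range issue: the theorem states $s\in(0,n)$, but the proof in this framework only covers $s\in[0,1)$ (with an indicated route to $s<2$ by iterating, and higher $s$ requiring higher-order extensions); you implicitly restrict to $s<1$ without flagging it, so you should state that limitation explicitly.
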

By duality, setting $u := \lapms{s} f$, Theorem~\ref{th:chanillo} is a consequence of the following proposition. It is stated for $s \in [0,1)$. With little extra work (iterating the integration-by-parts procedure) one can extend this to $s \in [0,2)$. For higher order $s$ one first needs a suitable higher-order extension replacing the one by Caffarelli and Silvestre \cite{CaffarelliSilvestre07}. This is done in \cite{Yang-2013,Roncal-Stinga-2016}. So we think it is likely to obtain the full Theorem~\ref{th:chanillo} with this method, but we will make no attempt to prove this here.
\begin{proposition}
Let $s \in [0,1)$, and $p$, $q$ as in Theorem~\ref{th:chanillo}, $q' = \frac{q}{q-1}$ then
\[
\int_{\R^n} \brac{\laps{s} u\, v - u \laps{s} v} \varphi \aleq [\varphi]_{BMO}\ \|\laps{s} u\|_{L^p}\ \|\laps{s} v\|_{L^{q'}} .
\]
\end{proposition}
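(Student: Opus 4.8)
The plan is to follow exactly the template already set for the Jacobian and div-curl estimates: move everything to the upper half-space via the \emph{generalized} Poisson extension $P_t^s$, use an integration-by-parts identity to redistribute the fractional derivative $\laps{s}$, extract the ``cancellation'' from a product rule for the extended operator, and finish with the trace-type estimates of Section~\ref{s:bbestimates}. Concretely, let $U(x,t):=P_t^s u(x)$, $V(x,t):=P_t^s v(x)$, and $\Phi(x,t):=P_t^1\varphi(x)$ be the harmonic extension of $\varphi$. Recall from \eqref{eq:spoissonpde} that $U,V$ satisfy $\dv_{\R^{n+1}}(t^{1-s}\nabla U)=0$ with $-t^{1-s}\partial_t U|_{t=0}=c\,\laps{s}u$, and similarly for $V$; while $\Phi$ is harmonic with $-\partial_t\Phi|_{t=0}=c\,\laps{1}\varphi$. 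The expression $\int_{\R^n}(\laps s u\,v-u\laps s v)\varphi$ should be rewritten, using these boundary identifications and integration by parts in the half-space, as a half-space integral in which the weight $t^{1-s}$ and the operators $\partial_t$ are distributed; the antisymmetric combination $\laps s u\,v - u\laps s v$ is precisely what makes the ``diagonal'' term (both derivatives on $U$, resp.\ $V$) cancel, leaving a manageable expression.

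The key steps, in order. First, write $\int_{\R^n}(\laps s u\,v-u\laps s v)\,\varphi$ as a boundary integral and apply the divergence theorem on $\R^{n+1}_+$ with the weight $t^{1-s}$: this produces a bulk integral of the form $\int_{\R^{n+1}_+} t^{1-s}\big(\nabla U\cdot(\ldots) - \nabla V\cdot(\ldots)\big)\Phi$ plus terms where a derivative falls on $\Phi$. Using $\dv(t^{1-s}\nabla U)=0$ and $\dv(t^{1-s}\nabla V)=0$, the terms where the full gradient hits $U$ against $V$ (and vice versa) combine antisymmetrically and cancel, leaving an integral in which $\nabla U$ is paired with $\nabla V$ and with $\nabla_{\R^{n+1}}\Phi$, i.e.\ schematically $\mathcal C\aleq \int_{\R^{n+1}_+} t^{1-s}\,|\nabla_{\R^{n+1}}U|\,|\nabla_{\R^{n+1}}V|\,|\nabla_{\R^{n+1}}\Phi|$ — with the right powers of $t$ because $s<1$ means $1-s\in(0,1)$ and we have room to spare. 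Second, as in the BMO-Jacobian proof, integrate by parts once more in the $t$-direction to gain an extra $\partial_t$ and a weight $t$, then use the $s$-harmonic equation $t^{1-s}\partial_{tt}U = -(1-s)t^{-s}\partial_t U - t^{1-s}\lap_x U$ (and similarly for $V$) together with harmonicity of $\Phi$ to convert every ``$t$-derivative that would otherwise cause trouble'' into an $x$-derivative, which can then be integrated by parts in the $x_j$ directions (no boundary terms, by the decay in Proposition~\ref{pr:pc:maximal}). This yields an estimate of the form $\mathcal C \aleq \int_{\R^{n+1}_+} t\,|\nabla_{\R^{n+1}}\Phi|\,\big(|\nabla_x\nabla U|\,|\nabla V| + |\nabla U|\,|\nabla_x\nabla V|\big)\cdot(\text{appropriate }t\text{-weights})$, where the power of $t$ multiplying $\nabla U$ and $\nabla V$ is chosen (using $s_i := s$, $p_i := p$ resp.\ $q'$) so that the relevant trace/Carleson characterization applies. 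Third, apply the trace theorems: the $t$-weighted area integral of $|\nabla_{\R^{n+1}}\Phi|$ against a Carleson-type measure is controlled by $[\varphi]_{BMO}$ (Proposition~\ref{pr:BMOusualestimate}), and the $t^{1-s}$-weighted $L^p$, resp.\ $L^{q'}$, area integrals of $\nabla U$, $\nabla V$ are equivalent to $\|\laps s u\|_{L^p}$, resp.\ $\|\laps s v\|_{L^{q'}}$ (the $s$-analogue of Proposition~\ref{pr:Sobolevspaces}/\ref{pr:BMOusualestimate}). Hölder in the half-space with exponents $p$, $q'$, and $\infty$ (for the BMO/Carleson factor) then closes the argument, giving exactly $\mathcal C\aleq[\varphi]_{BMO}\,\|\laps s u\|_{L^p}\,\|\laps s v\|_{L^{q'}}$.

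The main obstacle I expect is bookkeeping the fractional weights consistently with the extra integration by parts. Unlike the harmonic ($s=1$) case, the extension is degenerate-elliptic with weight $t^{1-s}$, so the second-order $t$-identity carries an additional first-order term $t^{-s}\partial_t U$; one must check that after the $x$-integration by parts every resulting term has a $t$-power lying in the admissible range for the trace characterization — this is where the restriction $s\in[0,1)$ is genuinely used (for $s\ge 1$ one would, as the paper notes for Theorem~\ref{th:chanillo}, need to iterate the procedure or use a higher-order extension). A secondary technical point is justifying the boundary-term vanishing both at $t=0$ and $t=\infty$ for the $s$-harmonic extensions of $u,v$ paired with the harmonic $\Phi$; this should follow from the decay estimates of Proposition~\ref{pr:pc:maximal} applied to $P_t^s$ and $P_t^1$, exactly as in the Jacobian proof, but the degenerate weight means one should track that $t^{1-s}|\nabla U|\,|\nabla V|\,|\Phi| \to 0$ and its $\partial_t$-integrated-by-parts companion $t\,|\cdots|\to 0$ at both ends. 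Once these weight-accounting issues are handled, the cancellation itself is, as in the earlier sections, just the classical product rule $\nabla(UV)=U\nabla V+V\nabla U$ applied inside the antisymmetric combination.
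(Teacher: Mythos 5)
The overall framework you propose — extend via $P_t^s$, integrate by parts in $t$, extract cancellation from the product rule, conclude with trace estimates — is the same as the paper's, and one of the cancellation computations you name (the ``diagonal'' term $\partial_t U\,\partial_t V$ dropping out of $\partial_t\big(t^{1-s}(\partial_t U\, V - U\,\partial_t V)\big)$) is indeed the key step. But your prediction of the \emph{outcome} of that cancellation is wrong in a way that hides an essential final step. You claim the cancellation leaves a schematic form $\mathcal C\aleq\int_{\R^{n+1}_+}t^{1-s}|\nabla_{\R^{n+1}}U|\,|\nabla_{\R^{n+1}}V|\,|\nabla_{\R^{n+1}}\Phi|$, with both $U$ and $V$ differentiated. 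That is the structure of the Da~Lio--Rivi\`ere three-term commutator $H_s(a,b)$, where the operator-level identity $L_s(AB)-L_s(A)B-A\,L_s(B)=2\,t^{1-s}\nabla_{\R^{n+1}}A\cdot\nabla_{\R^{n+1}}B$ does put a derivative on each factor. Chanillo's two-term commutator $\laps{s}u\,v-u\,\laps{s}v$ is weaker: the cancellation gives
\[
\partial_t\big(t^{1-s}(\partial_t U\,V-U\,\partial_t V)\big)=-t^{1-s}\nabla_x\!\cdot\!\big(\nabla_x U\,V-U\,\nabla_x V\big),
\]
which after integrating the $\nabla_x$ onto $\Phi$ always leaves one of $U$ or $V$ \emph{undifferentiated}. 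The same pattern persists through the second integration by parts that you correctly flag as necessary (to upgrade the lone $\partial_t U$ into $\nabla_x\nabla_{\R^{n+1}}U$ so that $\nu=s$ is allowed in the trace estimates).

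Because one of $u,v$ survives undifferentiated, the trace estimates (Proposition~\ref{pr:BMOusualestimate}) do not land you directly on the claimed right-hand side; they give
\[
\mathcal C\aleq[\varphi]_{BMO}\big(\|\laps{s}u\|_{L^p}\,\|v\|_{L^{p'}}+\|u\|_{L^q}\,\|\laps{s}v\|_{L^{q'}}\big),
\]
and one must then invoke the Sobolev embedding $\|v\|_{L^{p'}}\aleq\|\laps{s}v\|_{L^{q'}}$ and $\|u\|_{L^q}\aleq\|\laps{s}u\|_{L^p}$, which is exactly where the Chanillo scaling relation $\frac1q=\frac1p-\frac{s}{n}$ from \eqref{eq:chanillopq} enters. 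Your proposal never uses this relation and never invokes Sobolev, so even if the bookkeeping of the $t$-weights were carried out carefully, the argument could not close. A second, more minor deviation: you take $\Phi=P_t^1\varphi$ while extending $u,v$ by $P_t^s$; the paper extends all three by $P_t^s$, which keeps the weights $t^{1-s}$ uniform in the half-space calculus. Using a mixed extension is not obviously fatal, since the equation for $\Phi$ is never used, only its gradients, but it is extra friction you would have to resolve explicitly, and it is not what the paper does.
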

\begin{proof}
Let $U(x,t) := P_t^{s} u(x)$, $V(x,t) := P_t^{s} v(x)$, $\Phi(x,t) := P_t^s \varphi(x)$ the Caffarelli-Silvestre extension, as in \eqref{eq:spoissonpde}.
Then with the integration-by-parts formula in $t$,
\[
\mathcal{C} := \left |\int_{\R^n} \brac{\laps{s} u\, v - u \laps{s} v} \varphi \right |= c\left |\int_{\R^{n+1}_+} \partial_t \brac{t^{1-s}\brac{\partial_t U\, V - U\, \partial_t V} \Phi}  \right |.
\]
By two cancellation effects and since $\partial_{t} (t^{1-s} \partial_t U) = -t^{1-s} \lap_x U$,
\[
 \partial_t \brac{t^{1-s}\brac{\partial_t U\, V - U\, \partial_t V}} = -t^{1-s} \brac{\lap_x U\, V - U\, \lap_x V} = -t^{1-s} \nabla_x \cdot \brac{\nabla_x U\, V - U\, \nabla_x V}.
\]
Thus,
\[
 \mathcal{C} \aleq \left |\int_{\R^{n+1}_+} t^{1-s}\brac{\nabla_x U\, V - U\, \nabla_x V} \nabla_x \Phi \right | + \left |\int_{\R^{n+1}_+} t^{1-s}\brac{\partial_t U\, V - U\, \partial_t V} \Phi_t \right |
\]
By our assumption $s < 1$, and as we shall see below the first term is already in a good shape and can be estimated by Proposition~\ref{pr:BMOusualestimate}.
The second term needs one more step, because with $\partial_t U$ and Proposition~\ref{pr:BMOusualestimate} we only get an estimate in terms of $\laps{\nu} u$ for $\nu < s$. 
So we use again the integration-by-parts in $t$,
\[
  \int_{\R^{n+1}_+} t^{1-s}\brac{\partial_t U\, V - U\, \partial_t V} \Phi_t  = \frac{1}{s} \int_{\R^{n+1}_+} t^s\, \partial_{t} \brac{ t^{1-s}\brac{\partial_t U\, V - U\, \partial_t V} t^{1-s}\Phi_t} .
\]
Again we observe a cancellation, 
\[
  t^s\partial_{t} \brac{ t^{1-s}\brac{\partial_t U\, V - U\, \partial_t V} t^{1-s}\Phi_t} = -t^{2-s} \brac{\lap_x U\, V - U\, \lap_x V} \Phi_t - t^{2-s}\brac{\partial_t U\, V - U\, \partial_t V} \lap_x \Phi.
\]
With yet another integration by parts in $x$-direction, since  $\lap_x = \nabla_x \cdot \nabla_x$ we arrive at
\[
\begin{split}
 \mathcal{C} \aleq &\int_{\R^{n+1}_+} t^{1-s}\brac{|\nabla_x U|\ |V| + |U|\, |\nabla_x V|} |\nabla_{\R^{n+1}}\Phi| \\
 &+ \int_{\R^{n+1}_+} t^{2-s} \brac{|\nabla_{\R^{n+1}} \nabla_{x}U|\, | V| + |U|\, |\nabla_{\R^{n+1}} \nabla_{x} V|}\ |\nabla_{\R^{n+1}} \Phi|\\
 &+ \int_{\R^{n+1}_+} t^{2-s} |\nabla_{\R^{n+1}} U|\, | \nabla_{\R^{n+1}} V|\ |\nabla_{\R^{n+1}} \Phi|.
\end{split}
 \]
By Proposition~\ref{pr:BMOusualestimate}, 
\[
\mathcal{C} \aleq [\varphi]_{BMO}\ \brac{\|\laps{s} u\|_{L^p}\ \|v\|_{L^{p'}} + \|u\|_{L^q}\ \|\laps{s} v\|_{L^{q'}}}.
\]
We conclude by the relation between $p$ and $q$ \eqref{eq:chanillopq} and Sobolev-inequality:
\[
 \|v\|_{L^{p'}} \aleq \|\laps{s} v\|_{L^{q'}}, \quad \|u\|_{L^{q}} \aleq \|\laps{s} u\|_{L^{p}}.
\]
\end{proof}

\section{Coifman-McIntosh-Meyer type commutator estimate}\label{s:CMcIM}
Now we treat commutators in terms of H\"older norms, namely we consider
\[
 [\laps{s}, g](f) = \laps{s}(gf) - g \laps{s}f,
\]
and its (nontrivial) zero-order version
\[
 [\Rz_i, g](f) = \Rz_i(gf) - g \Rz_if.
\]
The estimate \eqref{eq:lapsgfglapsflip} is probably most close to the Coifman-Meyer commutator estimates and Kato-Ponce type estimates, see \cite{Coifman-Meyer-1986,Coifman-McIntosh-Meyer-1982,Kato-Ponce-1988}. The estimates \eqref{eq:lapsgfglapsfhoel} and \eqref{eq:rieszgf} seem to be new.
The limit case $\sigma = 0$ for \eqref{eq:rieszgf} is the Coifman-Rochberg-Weiss theorem, Theorem~\ref{th:CRW}.

\begin{theorem}\label{th:CMcIM}
Let $s \in (0,1]$ and $p \in (1,\infty)$. Then,
\begin{equation}\label{eq:lapsgfglapsflip}
\| [\laps{s}, g](f)\|_{L^p(\R^n)} \aleq [g]_{\lip} \|\lapms{1-s} f \|_{L^p(\R^n)}.
\end{equation}
More generally, for $\sigma \in [s,1]$, $f,g \in C_c^\infty(\R^n)$,
\begin{equation}\label{eq:lapsgfglapsfhoel}
\| [\laps{s}, g](f)\|_{L^p(\R^n)} \aleq \brac{[\laps{\sigma} g]_{BMO} + [g]_{C^\sigma}} \|\lapms{\sigma-s} f \|_{L^p(\R^n)}.
\end{equation}
Also, for $q_1,q_2,p \in (1,\infty)$, $\frac{1}{q_1} + \frac{1}{q_2} = \frac{1}{p}$, $\sigma \in [s,1)$,
\begin{equation}\label{eq:lapsgfglapsfintermediate}
\| [\laps{s}, g](f)\|_{L^p(\R^n)} \aleq \|\laps{\sigma} g\|_{L^{q_1}(\R^n)}\ \|\lapms{\sigma-s} f \|_{L^{q_2}(\R^n)}.
\end{equation}
For $\sigma < 1$, any $i =1,\ldots,n$,
\begin{equation}\label{eq:rieszgf}
 \| [\Rz_i, g] f \|_{L^p(\R^n)} \aleq [\laps{\sigma} g]_{BMO}\,  \|\lapms{\sigma} f\|_{L^p(\R^n)}.
\end{equation}
Also, for $q_1,q_2,p \in (1,\infty)$, $\frac{1}{q_1} + \frac{1}{q_2} = \frac{1}{p}$, $\sigma \in [0,1)$,
\begin{equation}\label{eq:rieszgfintermediate}
 \| [\Rz_i, g] f \|_{L^p(\R^n)} \aleq \|\laps{\sigma} g\|_{L^{q_1}(\R^n)}\ \|\lapms{\sigma} f\|_{L^{q_2}(\R^n)}.
\end{equation}
\end{theorem}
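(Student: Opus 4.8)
The plan is to reduce all six inequalities to trilinear bounds that — up to the choice of $t$-weights — are already treated in Sections~\ref{s:Chanillo} and \ref{s:CRW}. For $p\in(1,\infty)$, $\tfrac1p+\tfrac1{p'}=1$, I test against $h\in C_c^\infty(\R^n)$ with $\|h\|_{L^{p'}}\le1$ and use self-adjointness of $\laps{s}$ and skew-adjointness of $\Rz_i$ (i.e. $\int g\,\Rz_if=-\int\Rz_ig\,f$):
\[
 \int_{\R^n} h\,[\laps{s},g](f)=\int_{\R^n} g\brac{f\,\laps{s}h-h\,\laps{s}f},\qquad
 \int_{\R^n} h\,[\Rz_i,g](f)=-\int_{\R^n} g\brac{f\,\Rz_ih+h\,\Rz_if}.
\]
It then suffices to bound the right-hand sides by $\brac{[\laps{\sigma}g]_{BMO}+[g]_{C^\sigma}}\|\lapms{\sigma-s}f\|_{L^p}$ and $[\laps{\sigma}g]_{BMO}\|\lapms{\sigma}f\|_{L^p}$ respectively (and by the obvious $L^{q_1}$--$L^{q_2}$ splittings for the intermediate estimates); note that \eqref{eq:lapsgfglapsflip} is just the endpoint $\sigma=1$ of \eqref{eq:lapsgfglapsfhoel}, since $\lapms{1-s}=\lapms{\sigma-s}$ there and $[\laph g]_{BMO}+[g]_{C^1}\aleq[g]_{\lip}$, so it needs no separate argument.

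For the $\laps{s}$-commutators I extend $f,g,h$ by the Caffarelli--Silvestre operator $P_t^s$ to $F,G,H$ on $\R^{n+1}_+$ and apply the integration-by-parts identity of Section~\ref{s:Chanillo} verbatim (with $\varphi\leftrightarrow g$, $u\leftrightarrow h$, $v\leftrightarrow f$):
\[
 \int_{\R^n} g\brac{f\,\laps{s}h-h\,\laps{s}f}=c\int_{\R^{n+1}_+}\partial_t\brac{t^{1-s}\brac{\partial_t H\,F-H\,\partial_t F}\,G}.
\]
Expanding $\partial_t$, using $\partial_t(t^{1-s}\partial_t F)=-t^{1-s}\lap_x F$ together with the same two cancellation effects as in Section~\ref{s:Chanillo}, and integrating by parts in $x$ (no boundary terms, by Proposition~\ref{pr:pc:maximal}), I reach
\[
 \mathcal{C}\aleq\int_{\R^{n+1}_+}t^{1-s}\,|\nabla_{\R^{n+1}}G|\brac{|F|\,|\nabla_{\R^{n+1}}H|+|H|\,|\nabla_{\R^{n+1}}F|},
\]
which for $\sigma=1$ already has the right shape: Proposition~\ref{pr:BMOusualestimate} reads off $[g]_{\lip}$ from $t^{1-s}|\nabla_{\R^{n+1}}G|$, $\|\lapms{1-s}f\|_{L^p}$ from $t^{1-s}|\nabla_{\R^{n+1}}F|$, and $\|h\|_{L^{p'}}$ from $|H|$, giving \eqref{eq:lapsgfglapsflip}. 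For $s\le\sigma<1$ one first performs one (or, if needed, more) additional integrations by parts in $t$ — as already indicated in Section~\ref{s:Chanillo} for passing from $s<1$ to $s<2$ — to transfer a factor $t^{\sigma-s}$ from the $F$-factor onto $G$; the same product-rule identities turn every resulting term into one carrying at most a $|\nabla_{\R^{n+1}}\nabla_x\,\cdot|$ on the differentiated factor, whereupon Proposition~\ref{pr:BMOusualestimate} (for \eqref{eq:lapsgfglapsfhoel}) or the $L^{q}$-trace characterisations of Section~\ref{s:poissonchar} (for \eqref{eq:lapsgfglapsfintermediate}) produce the stated norms $[\laps{\sigma}g]_{BMO}+[g]_{C^\sigma}$, resp. $\|\laps{\sigma}g\|_{L^{q_1}}$, paired with $\|\lapms{\sigma-s}f\|$ and $\|h\|$.

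For the Riesz-transform commutators the reduced form $-\int g(f\Rz_ih+h\Rz_if)$ is precisely the quantity $\mathcal{C}$ of the proof of Theorem~\ref{th:CRW}: using the harmonic extensions, the operator $\tilde{\Rz}_i$ on $\R^{n+1}_+$, and the identities \eqref{eq:ptrz1}--\eqref{eq:ptrz3}, that proof already yields
\[
 \mathcal{C}\aleq\max_{\tilde F\in\{F,\tilde{\Rz}_iF\}}\ \max_{\tilde H\in\{H,\tilde{\Rz}_iH\}}\int_{\R^{n+1}_+}t\,|\nabla_{\R^{n+1}}G|\brac{|\tilde F|\,|\nabla_{\R^{n+1}}\tilde H|+|\tilde H|\,|\nabla_{\R^{n+1}}\tilde F|}.
\]
To obtain \eqref{eq:rieszgf} I do not stop at the weight $t$: I integrate by parts in $t$ again and use harmonicity once more (so $\partial_{tt}=-\lap_x$ and a further $x$-integration by parts remove the bad terms) to move a factor $t^{\sigma}$ off the $F$-factor onto $G$, arriving at integrals of the form $t^{1+\sigma}\,|\nabla_x\nabla_{\R^{n+1}}G|\cdot(\text{factors in }\tilde F,\tilde H)$. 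Proposition~\ref{pr:BMOusualestimate}, now used with differentiability parameter $\sigma$ rather than $1$, identifies the $G$-Carleson integral with $[\laps{\sigma}g]_{BMO}$ and the remaining factors with $\|\lapms{\sigma}\tilde f\|_{L^p}\|\tilde h\|_{L^{p'}}$; $L^p$-boundedness of $\Rz_i$ removes the tildes, and the supremum over $h$ gives \eqref{eq:rieszgf}. The intermediate bound \eqref{eq:rieszgfintermediate} is the same computation with the $L^{q}$-trace estimates of Section~\ref{s:poissonchar} in place of the BMO trace.

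The main obstacle lies entirely in the bookkeeping of the middle step: choosing exactly how many extra $t$-integrations by parts to carry out and how to split the resulting power of $t$ among $F$, $G$, $H$ so that the constraints $\sigma\in[s,1]$, $\sigma\in[s,1)$, $\sigma\in[0,1)$ and $\tfrac1{q_1}+\tfrac1{q_2}=\tfrac1p$ land inside the admissibility ranges of the trace characterisations of Section~\ref{s:poissonchar}. The cancellation algebra is routine — it is the product-rule identity already used in Sections~\ref{s:CLMSdivcurl}, \ref{s:CRW} and \ref{s:Chanillo} — and no boundary terms ever appear, by the decay of the Poisson extensions in Proposition~\ref{pr:pc:maximal}.
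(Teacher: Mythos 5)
Your reduction to trilinear bounds and the choice of harmonic (resp. Caffarelli--Silvestre) extensions match the paper exactly, and your observation that \eqref{eq:lapsgfglapsflip} is the $\sigma=1$ endpoint of \eqref{eq:lapsgfglapsfhoel} is fine. However, there is a genuine gap in the middle.

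For the $\laps{s}$-commutator estimates you stop after the first integration by parts at
\[
 \mathcal{C}\aleq\int_{\R^{n+1}_+}t^{1-s}\,|\nabla_{\R^{n+1}}G|\,\brac{|F|\,|\nabla_{\R^{n+1}}H|+|H|\,|\nabla_{\R^{n+1}}F|},
\]
and claim that this ``already has the right shape'' for $\sigma=1$, with the trace theorems reading off $[g]_{\lip}$ from $t^{1-s}|\nabla_{\R^{n+1}}G|$. This step fails: $\nabla_{\R^{n+1}}G$ contains the component $\partial_t G$, and the H\"older trace characterization for $\partial_t P^s_t g$ in Proposition~\ref{pr:hoelder} (and accordingly Propositions~\ref{pr:Hoelderusualestimate}, \ref{pr:Lpqest}) only controls it by $[g]_{C^\nu}$ for $\nu<s$. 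Since the theorem requires $\nu=\sigma\in[s,1]$ throughout (not just $\sigma=1$), the bare $\partial_t G$ term is never admissible. Concretely, even for $s=1$, $\sup_t|\partial_t G(x,t)|\aleq[g]_{\lip}$ would amount to $\|\laph g\|_{L^\infty}\aleq\|\nabla g\|_{L^\infty}$, which is false. The paper avoids exactly this by continuing with further $t$-integrations by parts in both pieces $\mathcal{C}_1$ and $\mathcal{C}_2$ to reach the claim \eqref{eq:CMgoal}, whose integrand carries only $t^{2-s}|\nabla_x G|$ or $t^{2-s}|\nabla_{\R^{n+1}}\nabla_x G|$ on the $g$-factor; these are the structures admissible for $\nu\in(0,1]$ and $\nu\in(0,1+s)$ respectively in Proposition~\ref{pr:Hoelderusualestimate}. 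Your remark that ``one first performs additional integrations by parts in $t$ for $s\le\sigma<1$'' hints at the right mechanism, but misattributes its purpose (weight transfer rather than removing $\partial_t G$) and omits it precisely in the case $\sigma=1$ where it is indispensable. The cancellation in $\mathcal{C}_1$ (that $F_t\,t^{1-s}H_t - t^{1-s}F_t\,H_t=0$ after the second $t$-integration by parts) is the specific structure you are missing.

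Two smaller issues: you invoke Proposition~\ref{pr:BMOusualestimate} (BMO) where the paper uses Proposition~\ref{pr:Hoelderusualestimate} (H\"older) to produce $[g]_{\lip}$ and $[\laps{\sigma}g]_{BMO}$. And for the Riesz-transform estimates \eqref{eq:rieszgf}--\eqref{eq:rieszgfintermediate} the paper's goal \eqref{eq:reiszgf:goal} already admits direct application of Propositions~\ref{pr:Hoelderusualestimate} and \ref{pr:Lpqest} with $\nu=\sigma<1=s$ (note the extension here is the genuine harmonic one, so $s=1$ and the $\nu<s$ constraint is automatic); your extra integration by parts to reach $t^{1+\sigma}|\nabla_x\nabla_{\R^{n+1}}G|$ is superfluous, and the stated weight does not follow from a further $t$-integration by parts but would have to come from the trace proposition's own weight split.
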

For $n=1$ and $s = 1$, the commutator in \eqref{eq:lapsgfglapsflip} is also called the first Calder\'{o}n commutator \cite{Calderon-1965}.
\begin{proof}[Proof of \eqref{eq:lapsgfglapsflip}, \eqref{eq:lapsgfglapsfhoel}, and \eqref{eq:lapsgfglapsfintermediate}]
Let again $F(x,t) := P^s_t f(x)$, and $G,H$ likewise be the $P_t^s$-extension of $g$, $h$. Integration by parts gives
\[
\begin{split}
 \mathcal{C} := &\int_{\R^n} g\, f\, \laps{s} h - g\, \laps{s} f\, h\\
=& \int_{\R^{n+1}_+}\partial_t \brac{G \brac{ F\, t^{1-s}\partial_t H - t^{1-s}\partial_t F\, H}}.
\end{split}
\]
We claim that
\begin{equation}\label{eq:CMgoal}
\begin{split}
 \mathcal{C}  \aleq& \int_{\R^{n+1}_+}t^{2-s}\, |\nabla_{\R^{n+1}} \nabla_x G| \brac{|\nabla_{\R^{n+1}} F|\, |H| + |\nabla_{\R^{n+1}} H|\, |F|}\\
& +\int_{\R^{n+1}_+}t^{2-s}\, |\nabla_x G|\, |\nabla_{\R^{n+1}} F|\, |\nabla_{\R^{n+1}} H|.
\end{split}
 \end{equation}
Once we confirm this, we argue with Proposition~\ref{pr:Hoelderusualestimate} for \eqref{eq:lapsgfglapsflip}, \eqref{eq:lapsgfglapsfhoel} and with Proposition~\ref{pr:Lpqest} for \eqref{eq:lapsgfglapsfintermediate}. Taking in the resulting estimate the supremum over all $h$ with $\|h\|_{L^{p'}} \leq 1$ we obtain \eqref{eq:lapsgfglapsflip}, \eqref{eq:lapsgfglapsfhoel}  and \eqref{eq:lapsgfglapsfintermediate}, respectively. 

It remains to show \eqref{eq:CMgoal}. Observe a first cancellation
\[
\begin{split}
 &\partial_t \brac{G \brac{ F\, t^{1-s}\partial_t H - t^{1-s}\partial_t F\, H}}\\
= &{t^{1-s} G_t \brac{ F\, H_t - F_t\, H}} + G \brac{ F\, \partial_t (t^{1-s}\partial_t H) - \partial_t(t^{1-s}\partial_t F)\, H},
\end{split}
\]
and since $\partial_t (t^{1-s} F_t) = -t^{1-s} \lap_x F$,
\[
= {t^{1-s} G_t \brac{ F\, H_t - F_t\, H}} - G t^{1-s}\brac{ F\, \lap_x H - \lap_x F\, H}.
\]
With another cancellation in the second term,
\[
= {t^{1-s} G_t \brac{ F\, H_t - F_t\, H}} - G t^{1-s} \nabla_x \cdot \brac{ F\, \nabla_x H - \nabla_x F\, H}\\
\]
Using integration-by-parts in $x$ we decompose $\mathcal{C} = \mathcal{C}_1 + \mathcal{C}_2$ with
\[
\mathcal{C}_1  := \int_{\R^{n+1}_+}t^{1-s} G_t \brac{ F\, H_t - F_t\, H }, \quad \mathcal{C}_2 := \int_{\R^{n+1}_+}t^{1-s} \nabla_{x}G \cdot \brac{ F\, \nabla_{x} H - \nabla_{x} F\, H }.
\]
As for the second term, integration by parts in $t$-direction, gives 
\[
 \mathcal{C}_2 = -\frac{1}{2-s} \int_{\R^{n+1}_+}t^{2-s} \partial_t \brac{\nabla_{x}G \cdot \brac{ F\, \nabla_{x} H - \nabla_{x} F\, H }}.
\]
Now the only term that is not already of a form needed for \eqref{eq:CMgoal} is the case where the $\partial_t$ hits $\nabla_x H$ or $\nabla_x F$. But then we perform another integration-by-parts in $x$-direction,
\[
\begin{split}
 &\int_{\R^{n+1}_+}t^{2-s}\ \nabla_{x}G \cdot \brac{ F\, \nabla_{x} H_t - \nabla_{x} F_t\, H }\\
 =&-\int_{\R^{n+1}_+}t^{2-s}\ \lap_{x}G \brac{ F\, H_t - F_t\, H } -\int_{\R^{n+1}_+}t^{2-s}\ \nabla_{x}G \cdot \brac{ \nabla_{x}F\,  H_t - F_t\, \nabla_{x} H }.
\end{split}
 \]
This is clearly of the form needed for \eqref{eq:CMgoal}. Thus $\mathcal{C}_2$ is estimated.

As for $\mathcal{C}_1$, integration by parts tells us
\[
\mathcal{C}_1  = \int_{\R^{n+1}_+}t^{1-s} G_t \brac{ F\, H_t - F_t\, H }, = -\frac{1}{s}\int_{\R^{n+1}_+}t^{s} \partial_t\brac{t^{1-s} G_t \brac{ F\, t^{1-s} H_t - t^{1-s} F_t\, H }}.
\]
Now, a second cancellation happens, since $F_t\, t^{1-s} H_t - t^{1-s} F_t\, H_t = 0$,
\[
 = \frac{1}{s}\int_{\R^{n+1}_+}t^{2-s}  \lap_x G \brac{ F\, H_t - F_t\, H } +\frac{1}{s}\int_{\R^{n+1}_+}t^{2-s}  G_t \brac{ F\, \lap_x H - \lap_x F\, H }.	
\]
and for the second term a further cancellation $0 = \nabla F\cdot \nabla H - \nabla F\cdot \nabla H$,
\[
 = \frac{1}{s}\int_{\R^{n+1}_+}t^{2-s}  \lap_x G \brac{ F\, H_t - F_t\, H } +\frac{1}{s}\int_{\R^{n+1}_+}t^{2-s}  G_t \nabla \cdot \brac{ F\, \nabla H - \nabla F\, H }.	
\]
Integrating by parts in $x$ we obtain an estimate of the form \eqref{eq:CMgoal}.
\end{proof}

\begin{proof}[Proof of \eqref{eq:rieszgf} and \eqref{eq:rieszgfintermediate}]
Let $F,G, \Phi$ be the harmonic extension of $f,g, \varphi$. By duality it suffices to show
\[
\mathcal{C} := \int_{\R^n}  g\ f \Rz_i[\varphi] + g \Rz_i [f] \varphi  \aleq \begin{cases}
                                                                              [g]_{C^\sigma}\ \brac{[\laps{\sigma} g]_{BMO} + [D^{\sigma} g]_{BMO}}  \|\lapms{\sigma} f\|_{L^p(\R^n)}\ \|\varphi\|_{L^{p'}(\R^n)}\\
                                                                              \|\laps{\sigma} g\|_{L^{q_1}(\R^n)}\ \|\lapms{\sigma} f\|_{L^{q_2}(\R^n)}\ \|\varphi\|_{L^{p'}(\R^n)}.
                                                                             \end{cases}
\]
We estimated $\mathcal{C}$ in the proof of Theorem~\ref{th:CRW} (note that the role of $\Phi$ and $G$ are exchanged there). Setting
\[
 \mathcal{C}  := \int_{\R^{n+1}_+} t\, \partial_{tt}  \brac{G\, F\, \tilde{\Rz}_i\Phi + G\, \tilde{\Rz}_iF\, \Phi },
\]
and we have
\begin{equation}\label{eq:reiszgf:goal}
 \mathcal{C} \aleq \max_{\tilde{F} \in \{F,\, \tilde{\Rz}_i F\}} \max_{\tilde{\Phi} \in \{\Phi,\, \tilde{\Rz}_i \Phi\}}	\int_{\R^{n+1}_+} t\, |\nabla_{\R^{n+1}} G| \brac{ |\nabla_{\R^{n+1}} F| |\Phi| + |F| |\nabla_{\R^{n+1}} \Phi|}
\end{equation}
The claim follows now from Proposition~\ref{pr:Hoelderusualestimate} and Proposition~\ref{pr:Lpqest}.
\end{proof}

\section{Fractional Leibniz rule}\label{s:leibniz}
The Leibniz rule implies 
\[
                                                H_{\nabla}(f,g) := \nabla (fg) - \nabla f\, g - f\, \nabla g \equiv 0.
\]
If one replaces $\nabla$ with $\laps{s}$ and defines for $s > 0$,
\[
H_s(f,g) := \laps{s} (fg) - \laps{s}f\, g - f\, \laps{s} g,
\]
it may be that $H_s(f,g) \neq 0$. For example,
\[
 H_2(f,g) = 2\, \nabla f\cdot \nabla g.
\]
However, there are the so-called \emph{fractional} Leibniz-rules, such as \eqref{eq:interleibniz} below. Originally, they are due to Kenig-Ponce-Vega \cite{Kenig-Ponce-Vega-1993}, see also \cite{Grafakos-Maldonado-Naibo-2014,Bourgain-Li-2014}. Our extension method also shows a limit estimate, \eqref{eq:limitleibniz}, which was announced in \cite[(5.29)]{Schikorra-epsilon} and proven (in the arxiv-version of that paper) with para-product arguments.

\begin{theorem}\label{th:HsBMOest}
For any $s \in (0,1]$, $p \in (1,\infty)$, $f$, $\varphi \in C_c^\infty(\R^n)$,
\begin{equation}\label{eq:limitleibniz}
 \|H_{s} (f, \varphi)\|_{L^p(\R^n)} \aleq \|\laps{s} f\|_{L^p(\R^n)}\ [\varphi]_{BMO}
\end{equation}
Also we have an intermediate estimate: for any $t \in (0,s)$, $p,p_1,p_2 \in (1,\infty)$, $q,q_1,q_2 \in [1,\infty]$ such that
\[
 \frac{1}{p} = \frac{1}{p_1} + \frac{1}{p_2}, \quad \frac{1}{q} = \frac{1}{q_1} + \frac{1}{q_2},
\]
it holds that
\begin{equation}\label{eq:interleibniz}
\|H_{s} (f, \varphi)\|_{L^{(p,q)}(\R^n)} \aleq \|\laps{s-t} f\|_{L^{(p_1,q_1)}(\R^n)}\ \|\laps{t} \varphi\|_{L^{(p_2,q_2)}(\R^n)},
\end{equation}
\end{theorem}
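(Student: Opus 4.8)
The plan is to mimic the harmonic-extension / integration-by-parts strategy used throughout the paper, now applied to the bilinear form $H_s(f,\varphi)$. Let me extend $f$ and $\varphi$ by the generalized Poisson operator $P_t^s$, writing $F(x,t)=P_t^s f(x)$ and $\Phi(x,t)=P_t^s\varphi(x)$, so that both solve the degenerate equation $\dv_{\R^{n+1}}(t^{1-s}\nabla \cdot)=0$ and the Caffarelli--Silvestre boundary identification $-t^{1-s}\partial_t F|_{t=0}=c\,\laps{s}f$ holds. By duality, it suffices to estimate $\mathcal{C}:=\left|\int_{\R^n}\brac{\laps{s}(f\varphi)-\laps{s}f\,\varphi-f\,\laps{s}\varphi}\,h\right|$ against the claimed norm of $h$ (i.e. $L^{p'}$ or $L^{(p',q')}$ with the appropriate splitting). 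First I would rewrite $\mathcal{C}$ using the boundary identification and an integration by parts in $t$: the term $\laps{s}(f\varphi)$ corresponds to the product $F\Phi$ extended, but the crucial point (as in the Riesz-transform proof) is that $P_t^s(f\varphi)\ne F\Phi$, so I must work with the genuine extension $\Psi:=P_t^s(f\varphi)$ and compare. Cleanly, $\mathcal{C}=\left|\int_{\R^{n+1}_+}\partial_t\brac{t^{1-s}\brac{\partial_t\Psi\,H^? - \dots}}\right|$ — more precisely I would instead route everything through the $h$-extension $H(x,t)=P_t^s h(x)$ and express all three of the $\laps{s}$-terms as boundary integrals, so that $\mathcal{C}=c\left|\int_{\R^{n+1}_+}\partial_t\brac{t^{1-s}\brac{(\partial_t F)\,\Phi\,H + F\,(\partial_t\Phi)\,H - \partial_t(F\Phi)\sim\cdot }}\right|$ after using that the sum of the three boundary terms telescopes.

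The key algebraic observation — the ``cancellation effect'' — is exactly the product rule: if $\Psi$ were $F\Phi$, then $\partial_t(F\Phi)=\partial_t F\,\Phi+F\,\partial_t\Phi$, so the combination $\partial_t\Psi-\partial_tF\,\Phi-F\,\partial_t\Phi$ vanishes identically at the harmonic level for $s=1$ and more generally produces a lower-order term. The honest way to see this with the $P_t^s$ extension: write the integrand in the bulk as $t^{1-s}\partial_t\brac{\brac{\partial_t(FH\Phi) - \text{symmetrizations}}}$ and use $\partial_t(t^{1-s}\partial_t F)=-t^{1-s}\lap_x F$ (the defining equation) to convert the ``bad'' second-$t$-derivative terms into $\lap_x=\nabla_x\cdot\nabla_x$ terms, then integrate by parts in $x$ (no boundary terms in the $x$-directions by the decay from Lemma~\ref{pr:pc:maximal}). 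After one or two such rounds — exactly parallel to the $\mathcal{C}_1,\mathcal{C}_2$ bookkeeping in the Coifman--McIntosh--Meyer proof of \eqref{eq:lapsgfglapsflip} — the target reduces to showing
\begin{equation*}
\mathcal{C}\aleq\int_{\R^{n+1}_+}t^{2-s}\,\abs{\nabla_{\R^{n+1}}\nabla_x F}\,\abs{\nabla_{\R^{n+1}}\Phi}\,\abs{H} + \int_{\R^{n+1}_+}t^{2-s}\,\abs{\nabla_{\R^{n+1}}F}\,\abs{\nabla_{\R^{n+1}}\nabla_x\Phi}\,\abs{H} + (\text{a cross term}).
\end{equation*}
The essential structural gain over a single commutator is that one full $x$-derivative has been moved onto a factor while one $t$-weight has been gained, so the surviving extension factors carry derivative orders summing correctly to $s$: one gets $\laps{s-t}f$ paired with $\laps{t}\varphi$ (for the intermediate estimate) and $\laps{s}f$ paired with $[\varphi]_{BMO}$ (for the limit estimate).

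To close, I would invoke the trace/Poisson estimates assembled in the paper as black boxes: Proposition~\ref{pr:BMOusualestimate} to turn $\int t^{\cdot}\abs{\nabla_{\R^{n+1}}\Phi}\cdot(\dots)$ into $[\varphi]_{BMO}$ times $L^p\times L^{p'}$ norms of the remaining factors, giving \eqref{eq:limitleibniz}; and Proposition~\ref{pr:Lpqest} together with the identification of the $t^{1-\frac1p-s}\nabla$-square-function with $[\,\cdot\,]_{W^{s,p}}$ / $\laps{s}$-norms in Lorentz scales for \eqref{eq:interleibniz}, choosing the weight split so that $f$ receives differentiability $s-t$ (with exponent pair $(p_1,q_1)$) and $\varphi$ receives $t$ (with $(p_2,q_2)$), the Hölder exponents matching $\frac1p=\frac1{p_1}+\frac1{p_2}$ and $\frac1q=\frac1{q_1}+\frac1{q_2}$ plus the dual index from $h$. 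The main obstacle I anticipate is the bookkeeping: unlike the Riesz-transform case there is no clean operator identity like \eqref{eq:ptrz1}--\eqref{eq:ptrz3}, so I must carefully track which of the three $\laps{s}$-terms contributes which boundary term and verify that, after integrating by parts in $t$, the product-rule cancellation genuinely kills the top-order piece — i.e. that $\partial_t\brac{\partial_tF\,\Phi+F\,\partial_t\Phi}$ minus the corresponding piece from $\Psi$ is controlled by $\lap_x$-type terms and a single harmless cross term $\abs{\nabla_{\R^{n+1}}F}\,\abs{\nabla_{\R^{n+1}}\Phi}$. A secondary subtlety is checking that all $t$-integrations by parts produce no boundary contributions at $t=0$ or $t=\infty$, which follows from the decay and maximal-function estimates quoted in Lemma~\ref{pr:pc:maximal}, but must be stated since the weights $t^{1-s}$, $t^{2-s}$ are singular or degenerate at the respective ends.
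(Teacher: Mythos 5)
Your outline matches the paper's own proof: duality against a test function $g\in L^{p'}$, Poisson extensions $F=P^s_tf$, $\Phi=P^s_t\varphi$, $G=P^s_tg$, an integration by parts in $t$ via the boundary identification in \eqref{eq:spoissonpde}, the product rule plus the defining PDE $\partial_t\brac{t^{1-s}\partial_t\,\cdot}=-t^{1-s}\lap_x\,\cdot$ to realize the cancellation, a further $x$-integration by parts, and finally Propositions~\ref{pr:BMOcharacterization}, \ref{pr:BMOusualestimate}, and \ref{pr:Lpqest}. Two small points worth tidying: the detour via $\Psi:=P^s_t(f\varphi)$ is unnecessary — by self-adjointness $\int \laps{s}(f\varphi)\,g=\int f\varphi\,\laps{s}g$, so everything routes through $G$ and the extension of the product $f\varphi$ never appears; and the paper does split into $s<1$ versus $s=1$, because the trace estimates demand derivative order strictly below $s$ in the $\partial_t$-slots, which forces one extra $t$-integration by parts precisely when $s=1$, so the ``bookkeeping obstacle'' you anticipate is real but is discharged by the same $\mathcal{C}_1/\mathcal{C}_2$ two-round device you already cite from Section~\ref{s:CMcIM}.
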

\begin{proof}
We only show the BMO-estimate, the intermediate estimate follows with the same argument using Proposition~\ref{pr:Lpqest} instead of Proposition~\ref{pr:BMOusualestimate}.

By duality we need to show
\[
 \mathcal{C} := \left |\int_{\R^n} f\ \varphi\ \laps{s} g - \laps{s} f\ \varphi\ g- f\ \laps{s} \varphi\ g\right | \aleq [\varphi]_{BMO}\ \|\laps{s} f\|_{L^p(\R^n)}\ \|g\|_{L^{p'}(\R^n)}.
\]
Letting $F(x,t) := P_t^s f(x)$, $G(x,t) := P_t^s g(x)$ and $\Phi(x,t) := P_t^s \varphi(x)$, an integration by parts in $t$ gives
\[
 \mathcal{C} \aleq \left |\int_{\R^{n+1}_+} \partial_t \brac{t^{1-s}F\ \Phi\ \partial_t G - t^{1-s}\partial_t F\ \Phi\ G- t^{1-s} F\ \partial_t \Phi\ G}\right |.
\]
We compute,
\[
\begin{split}
 &\partial_t \brac{t^{1-s}F\ \Phi\ \partial_t G - t^{1-s}\partial_t F\ \Phi\ G- t^{1-s} F\ \partial_t \Phi\ G}\\
  =&t^{1-s}\, \brac{\lap_x (F \Phi)\ G-F\ \Phi\ \lap_x G }\\
  &- 2 \nabla_x F\cdot \nabla_x \Phi\ G-2t^{1-s}\, \partial_t F\ \partial_t \Phi\ G.
\end{split}
 \]
The first term integrates to zero when integrating in $x$,
\[
 \int_{\R^{n+1}_+} t^{1-s}\, \brac{\lap_x (F \Phi)\ G-F\ \Phi\ \lap_x G } = 0.
\]
So we have 
\[
 \mathcal{C} \aleq \left |\int_{\R^{n+1}_+}  t^{1-s}\nabla_x F\cdot \nabla_x \Phi\ G\right |+\left |\int_{\R^{n+1}_+} t^{1-s}\, \partial_t F\ \partial_t \Phi\ G \right |.
\]
For $s < 1$, the first term already can be estimated by Proposition~\ref{pr:BMOcharacterization}
\[
\mathcal{C}_1 := \left |\int_{\R^{n+1}_+}  t^{1-s}\nabla_x F\cdot \nabla_x \Phi\ G\right | \aleq [\varphi]_{BMO}\ \|\laps{s} f\|_{L^{p}}\ \| g\|_{L^{p'}}.
\]
For $s = 1$, by another integration-by-parts in $t$-direction,
\[
 \mathcal{C}_1 = \left |\int_{\R^{n+1}_+}  t\, \partial_t (\nabla_x F\cdot \nabla_x \Phi\ G)\right | \aleq \int_{\R^{n+1}_+}  t\, |\nabla_{\R^{n+1}} \Phi|\, \brac{|\nabla_{\R^{n+1}}\nabla_x F| \ |G| + |\nabla_{\R^{n+1}} F| \ |\nabla_{\R^{n+1}} G| }.
\]
Indeed, the only term not immediately in this constellation can be transformed into the right form by an integration-by-parts in $x$-direction 
\[
 \int_{\R^{n+1}_+}  t\, \nabla_x F\cdot \nabla_x \partial_t \Phi\ G =-\int_{\R^{n+1}_+}  t\, \nabla_x \cdot (\nabla_x F\  G)\ \partial_t \Phi.
\]
Thus also for $s=1$, again with the help of Proposition~\ref{pr:BMOcharacterization}, 
\[
 \mathcal{C}_1 \aleq [\varphi]_{BMO}\ \|\laps{s} f\|_{L^{p}}\ \| g\|_{L^{p'}}.
\]
For the remaining term
\[
 \mathcal{C}_2 := \left |\int_{\R^{n+1}_+} t^{1-s}\, \partial_t F\ \partial_t \Phi\ G \right |
\]
with an integration-by-parts in $t$-direction,
\[
\mathcal{C}_2= \frac{1}{s}\left |\int_{\R^{n+1}_+} t^s \partial_t \brac{t^{1-s}\, \partial_t F\ t^{1-s}\partial_t \Phi\ G} \right |.
\]
In view of $\partial_t (t^{1-s} \partial_t F) = -ct^{1-s} \lap_x F$, this can be estimated by
\[
\mathcal{C}_2 \aleq \int_{\R^{n+1}_+}  t^{2-s}\, |\nabla_{\R^{n+1}} \Phi|\, \brac{|\nabla_{\R^{n+1}}\nabla_x F| \ |G| + |\nabla_{\R^{n+1}} F| \ |\nabla_{\R^{n+1}} G| }.
\]
Indeed, the only term not in this form can be treated as above,
\[
 \int_{\R^{n+1}_+} t^{2-s}\, \partial_t F\ t^{1-s}\lap_x \Phi\ G = -\int_{\R^{n+1}_+} t^{2-s}\, \nabla_x \Phi \cdot \nabla_x \brac{\partial_t F\  G}.
\]
We conclude with Proposition~\ref{pr:BMOcharacterization}.
\end{proof}

\section{Da~Lio-Rivi\`{e}re three-term commutator}\label{s:dalioriviere}
Another limit-space estimate of the three-term commutator $H_s(f,g)$ from Section~\ref{s:leibniz}, 
\[
H_s(f,g) := \laps{s} (fg) - \laps{s}f\, g - f\, \laps{s} g,
\]
is due to Da~Lio and Rivi\`{e}re, \cite{DR1dSphere,DR1dMan}. 
They showed that $\lapv H_{\frac{1}{2}}(f,g)$ appears as a natural replacement for the Jacobian structure for 1/2-harmonic maps. See also \cite{Schikorra-SNHarmS10,DndMan,Schikorra-epsilon} for higher order analogues and extensions. 

In \cite[Theorem 1.2]{DR1dSphere} the following three-term commutator estimate is proven.
\begin{theorem}[Da~Lio-Rivi\`{e}re \cite{DR1dSphere}]\label{th:DR}
For $a,b \in C_c^\infty(\R^n)$,
\[
\|(-\lap)^{\frac{1}{4}} H_{\frac{1}{2}} (a,b) \|_{\mathcal{H}^1(\R^n)} \aleq \|\lapv a\|_{L^2(\R^n)}\ \|\lapv b\|_{L^2(\R^n)}.
\]
Here, $\mathcal{H}^1(\R^n)$ denotes the Hardy-space.
\end{theorem}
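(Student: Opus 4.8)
The plan is to run the same scheme as for the fractional Leibniz rule, Theorem~\ref{th:HsBMOest}, but now with the Caffarelli--Silvestre parameter $s=\tfrac12$, exploiting that the degenerate operator $L:=\dv_{\R^{n+1}}\!\big(t^{1/2}\,\nabla_{\R^{n+1}}\,\cdot\,\big)$ obeys an honest product rule. First, by $\mathcal H^1$--$BMO$ duality and density of test functions, it suffices to prove, for every $\varphi\in C_c^\infty(\R^n)$,
\[
 \mathcal C:=\Bigl|\int_{\R^n}\varphi\;\lapv H_{\frac12}(a,b)\Bigr|
 =\Bigl|\int_{\R^n}(\lapv\varphi)\;H_{\frac12}(a,b)\Bigr|
 \aleq[\varphi]_{BMO}\,\|\lapv a\|_{L^2(\R^n)}\,\|\lapv b\|_{L^2(\R^n)} .
\]

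\noindent Next I would extend with the $s=\tfrac12$ Poisson operator of \eqref{eq:spoissonpde}: put $A:=P_t^{\frac12}a$, $B:=P_t^{\frac12}b$ and, crucially, $W:=P_t^{\frac12}(\lapv\varphi)$, so that $LA=LB=LW=0$ in $\R^{n+1}_+$ and $\lapv=(-\lap)^{\frac14}$ is (up to a constant) the boundary operator $u\mapsto -\lim_{t\to0}t^{1/2}\partial_t(P_t^{\frac12}u)$. The source of the three--term cancellation is the product rule $L(UV)=U\,LV+V\,LU+2\,t^{1/2}\,\nabla U\cdot\nabla V$, which for the $L$--harmonic functions $A,B$ reduces to $L(AB)=2\,t^{1/2}\,\nabla A\cdot\nabla B$. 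Applying Green's formula for $L$ to the pair $AB$ and $W$, using this identity together with the Caffarelli--Silvestre boundary identification and the ordinary product rule on $\R^n\times\{0\}$ (so that $t^{1/2}\partial_t(AB)\big|_{t=0}=-c\,(\lapv a\,b+a\,\lapv b)$), I expect to obtain after a short computation the clean identity
\[
 \int_{\R^n}(\lapv\varphi)\;H_{\frac12}(a,b)
 \;=\;c\int_{\R^{n+1}_+} \bigl(P_t^{\frac12}\lapv\varphi\bigr)\;t^{1/2}\,\nabla A\cdot\nabla B\;dx\,dt ,
\]
with all boundary contributions at $t=0$ and $t\to\infty$ vanishing since $a,b\in C_c^\infty$ and $\lapv\varphi$ is smooth and decaying, cf.\ Lemma~\ref{pr:pc:maximal}.

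\noindent It then remains to read everything off through trace estimates. By Proposition~\ref{pr:Sobolevspaces} applied with $s=\tfrac12$ and $p=2$ (so that the $t$--weight is $t^{0}$), $\int_{\R^{n+1}_+}|\nabla A|^2\aeq\|\lapv a\|_{L^2(\R^n)}^2$ and likewise for $B$. For $\varphi$ one uses that $t^{1/2}(-\lap)^{\frac14}P_t^{\frac12}$ is convolution with a mean--zero, $L^1$ approximate identity, whence the pointwise bound $\big|t^{1/2}\,P_t^{\frac12}(\lapv\varphi)(x)\big|\aleq[\varphi]_{BMO}$ holds on all of $\R^{n+1}_+$ (a maximal--function estimate of the type collected in Section~\ref{s:bbestimates}, in the spirit of Proposition~\ref{pr:BMOcharacterization}). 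Combining these with Cauchy--Schwarz,
\[
 \mathcal C\aleq\int_{\R^{n+1}_+}\bigl|t^{1/2}P_t^{\frac12}\lapv\varphi\bigr|\,|\nabla A|\,|\nabla B|
 \aleq[\varphi]_{BMO}\int_{\R^{n+1}_+}|\nabla A|\,|\nabla B|
 \aleq[\varphi]_{BMO}\,\|\lapv a\|_{L^2}\,\|\lapv b\|_{L^2},
\]
which is the desired estimate.

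\noindent The genuinely delicate point --- what I expect to be the main obstacle --- is precisely the factor $\lapv$ in front of $H_{\frac12}(a,b)$, i.e.\ the fact that the target space is the Hardy space $\mathcal H^1$ rather than merely $L^1$: it forces one to carry the half--derivative $\lapv\varphi$ through the extension and to control $t^{1/2}P_t^{\frac12}(\lapv\varphi)$ uniformly by $[\varphi]_{BMO}$, which is the step where the actual harmonic analysis sits. An alternative --- avoiding $\lapv\varphi$ altogether --- is to keep $\varphi$, use the same Green identity with $\Phi:=P_t^{\frac12}\varphi$ in place of $W$, and then, exactly as in the proofs of \eqref{eq:BMOjacobian} and Theorem~\ref{th:HsBMOest}, perform one further integration by parts in $t$ so that a second $x$--derivative is transferred from $\Phi$ onto $A$ or $B$ (a ``second cancellation effect''); this leaves an integral of the form $\int_{\R^{n+1}_+}t^{\alpha}|\nabla\Phi|\,\bigl(|\nabla\nabla_x A|\,|\nabla B|+|\nabla A|\,|\nabla\nabla_x B|\bigr)$ which the $BMO$--trace inequalities of Section~\ref{s:bbestimates} dispatch. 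In either route the only real work beyond the above is the bookkeeping of $t$--weights ensuring that $a$ and $b$ see only their $\dot W^{1/2,2}$--energy while $\varphi$ is tested through a $BMO$--admissible Carleson quantity.
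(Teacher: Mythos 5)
Your proposal reproduces the paper's strategy up to and including the key identity
\[
\int_{\R^n}(\lapv\varphi)\,H_{\frac12}(a,b)\;=\;c\int_{\R^{n+1}_+}W\;t^{1/2}\,\nabla_{\R^{n+1}}A\cdot\nabla_{\R^{n+1}}B
\]
(the harmonic extension, the product rule $L(AB)=2t^{1/2}\nabla A\cdot\nabla B$, and the Green identity all match the paper's $s<1$ proof, where $W$ is called $\tilde\Phi$). However, the final estimate of your main route is false. After pulling out $|t^{1/2}W|\aleq[\varphi]_{BMO}$ you are left needing $\int_{\R^{n+1}_+}|\nabla_{\R^{n+1}}A|\,|\nabla_{\R^{n+1}}B|\aleq\|\lapv a\|_{L^2}\|\lapv b\|_{L^2}$, and you justify this via Cauchy--Schwarz and the claim $\int_{\R^{n+1}_+}|\nabla_{\R^{n+1}}A|^2\aeq\|\lapv a\|_{L^2}^2$. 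This is not what Proposition~\ref{pr:Sobolevspaces} gives. The $\nabla_x$--part is fine (\eqref{eq:Hspequivalencenablax} with $\nu=\tfrac12<1$), but the $\partial_t$--part would require \eqref{eq:Hspequivalencedt} at $\nu=s=\tfrac12$, which is explicitly excluded: that estimate requires $\nu<s$. And indeed the constraint is not merely technical: for the $s=\tfrac12$ extension one has $\partial_tA(x,t)\sim -c\,t^{-1/2}\lapv a(x)$ as $t\to0^+$, so $\int_0^\infty|\partial_tA(x,t)|^2\,dt$ diverges whenever $\lapv a(x)\neq 0$, and hence $\int_{\R^{n+1}_+}|\nabla_{\R^{n+1}}A|^2=+\infty$. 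Your main route therefore stalls exactly at the point where the paper observes that the $\partial_tA\,\partial_tB$ piece ``needs one more step.''

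The fix is precisely the step you mention only as an ``alternative'': after splitting $\nabla_{\R^{n+1}}A\cdot\nabla_{\R^{n+1}}B=\nabla_xA\cdot\nabla_xB+\partial_tA\,\partial_tB$, you must perform one further integration by parts in $t$ on the second piece (using $\int_0^\infty f\,dt=-\tfrac1s\int_0^\infty t^s\partial_t(t^{1-s}f)\,dt$) and convert the $t$--derivatives landing on $A$ and $B$ into $\nabla_x\nabla_{\R^{n+1}}$ via the conjugate equation $\partial_t(t^{1-s}\partial_tA)=-t^{1-s}\lap_xA$. This raises the $t$--weight and shifts one $t$--derivative to an $x$--derivative, so that the resulting trace characterization (with a second--order derivative and weight $t^{2-s}$ or $t^{3-s}$, as in the paper's display \eqref{eq:hopefuls}) lands back at the correct Sobolev order $s$. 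Relatedly, you identify the ``genuinely delicate point'' as the pointwise bound $|t^{1/2}P_t^{1/2}\lapv\varphi|\aleq[\varphi]_{BMO}$; that bound is actually unproblematic (the kernel $t^{1/2}\laps{1/2}p_t^{1/2}$ is a mean--zero, integrable dilation family, so the standard BMO argument applies). The true obstacle is the normal derivative $\partial_tA\,\partial_tB$ at the endpoint $\nu=s$, which is what the additional $t$--integration by parts is designed to circumvent, and which the paper singles out explicitly as the reason the $s=1$ case ``was somewhat unexpected and required special care.''
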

For the proof, Da~Lio and Rivi\`ere used the theory of Triebel-Lizorkin spaces and paraproducts. Extending their techniques, the following was shown in \cite{Schikorra-epsilon} (for a proof see the arxiv-version). Again the original proof requires a lengthy computation with Triebel spaces and paraproducts. In particular the $s=1$-case was somewhat unexpected and required special care. Now it just follows from integration by parts.
\begin{theorem}
Let $s \in (0,1]$, $p \in (1,\infty)$, $p' = \frac{p}{p-1}$, $q \in [1,\infty]$, $q' = \frac{q}{q-1} \in [1,\infty]$. Then for any $a,b \in C_c^\infty(\R^n)$,
\begin{equation}\label{eq:Hsbmoest}
\int_{\R^n} H_s(a,b)\, \laps{s} \varphi \aleq [\varphi]_{BMO}\, \|\laps{s} a\|_{L^{(p,q)}(\R^n)}\|\laps{s} b\|_{L^{(p',q')}(\R^n)}.
\end{equation}
In particular, by the duality of Hardy-space $\mathcal{H}^1$ and BMO,
\[
\|\laps{s} \brac{ H_s(a,b)}\|_{\mathcal{H}^1} \aleq \|\laps{s} a\|_{L^{(p,q)}}\|\laps{s} b\|_{L^{(p',q')}}.
\]
\end{theorem}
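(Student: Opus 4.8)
The plan is to use the duality between the Hardy space $\mathcal{H}^1$ and $BMO$: once the bilinear estimate \eqref{eq:Hsbmoest} is established, the $\mathcal{H}^1$-bound for $\laps{s}(H_s(a,b))$ is immediate. So I fix $a,b,\varphi\in C_c^\infty(\R^n)$ and must control
\[
 \mathcal{C} := \int_{\R^n} \brac{\laps{s}(ab) - \laps{s} a\, b - a\, \laps{s} b}\,\laps{s}\varphi .
\]
Following the recipe of the paper I pass to the Caffarelli--Silvestre extensions $A := P_t^s a$, $B := P_t^s b$, $\Phi := P_t^s\varphi$: each solves $\dv_{\R^{n+1}}(t^{1-s}\nabla\,\cdot\,) = 0$ on $\R^{n+1}_+$, has the prescribed boundary value at $t=0$, and has co-normal derivative $-t^{1-s}\partial_t(\cdot)|_{t=0} = c\,\laps{s}(\cdot)$, cf.\ \eqref{eq:spoissonpde}. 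The Lorentz exponents $(p,q)$, $(p',q')$ with $q,q'$ possibly $\infty$ will cause no extra trouble: Proposition~\ref{pr:Lpqest} replaces the Lebesgue-space traces throughout.

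As everywhere in this paper, the cancellation responsible for $H_s$ is produced by the classical product rule, now in the extended variables: although $A$ and $B$ are separately $s$-harmonic, their product is not, and using $\dv(t^{1-s}\nabla A) = \dv(t^{1-s}\nabla B) = 0$ one computes $\dv_{\R^{n+1}}(t^{1-s}\nabla(AB)) = 2\, t^{1-s}\, \nabla_{\R^{n+1}} A\cdot\nabla_{\R^{n+1}} B$. Feeding this into the three terms of $\mathcal{C}$ via the self-adjointness of $\laps{s}$ and Green's identity on $\R^{n+1}_+$ -- the Poisson extensions decay at infinity, see Lemma~\ref{pr:pc:maximal}, so no far-away boundary term appears, and the $t=0$-boundary contributions reassemble, up to a harmless constant, to $\mathcal{C}$ -- and recalling that the Caffarelli--Silvestre extension of $\laps{s}\varphi$ is $\laps{s}_x\Phi = P_t^s(\laps{s}\varphi)$, one arrives (up to a harmless constant) at the identity
\[
 \mathcal{C} = \int_{\R^{n+1}_+} t^{1-s}\, \brac{\laps{s}_x\Phi}\,\, \nabla_{\R^{n+1}} A\cdot\nabla_{\R^{n+1}} B .
\]
This is the $s$-harmonic Wente-type identity $\int_{\R^n}\varphi\, H_s(a,b) = \mathrm{const}\cdot\int_{\R^{n+1}_+} t^{1-s}\,\Phi\, \nabla A\cdot\nabla B$, in the same spirit as the proofs of Theorems~\ref{th:clms:new} and~\ref{th:HsBMOest}, applied with $\varphi$ replaced by $\laps{s}\varphi$.

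It then remains to redistribute derivatives so that the trace theorems of Section~\ref{s:poissonchar} apply. Writing $t^{1-s}\nabla_{\R^{n+1}} A\cdot\nabla_{\R^{n+1}} B = \dv_{\R^{n+1}}(A\, t^{1-s}\nabla_{\R^{n+1}} B)$ (licit since $B$ is $s$-harmonic), integrating by parts, symmetrising in $A\leftrightarrow B$, and commuting $\laps{s}_x$ off $\Phi$ -- by self-adjointness in $x$ and the fractional Leibniz rule in the $x$-variables -- onto the single factors, one produces a genuine $\nabla_{\R^{n+1}}\Phi$ together with the honest extensions $\laps{s}_x A = P_t^s(\laps{s} a)$ and $\laps{s}_x B = P_t^s(\laps{s} b)$ of $\laps{s} a\in L^{(p,q)}$ and $\laps{s} b\in L^{(p',q')}$, plus a three-term-commutator remainder one differential order lower (treated by the same mechanism). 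For the endpoint $s=1$ one needs, exactly as in Theorem~\ref{th:HsBMOest}, one further integration by parts in $t$ (inserting the weight $t$ and a $\partial_t$) together with $\partial_{tt}\Phi = -\lap_x\Phi$ and one more integration by parts in $x$. The outcome is an integrand of the familiar shape
\[
 t^{2-s}\,\abs{\nabla_{\R^{n+1}}\Phi}\,\abs{\nabla_{\R^{n+1}}\nabla_x\brac{\laps{s}_x A}}\,\abs{\nabla_{\R^{n+1}} B} + (\text{permutations and lower-order terms}),
\]
to which Proposition~\ref{pr:BMOusualestimate} applies in the $\varphi$-slot and Proposition~\ref{pr:Lpqest} in the $\laps{s} a$- and $\laps{s} b$-slots, boundedness of the Riesz transforms on $L^{(p,q)}$ absorbing the tangential derivatives; this gives $\mathcal{C}\aleq[\varphi]_{BMO}\,\|\laps{s} a\|_{L^{(p,q)}}\,\|\laps{s} b\|_{L^{(p',q')}}$, which is \eqref{eq:Hsbmoest}, and the $\mathcal{H}^1$-estimate follows by duality.

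The step I expect to be the main obstacle is precisely this last redistribution. Because a $BMO$ function need not be bounded, the factor $t^{1-s}\laps{s}_x\Phi$ cannot be estimated as it stands, so one is forced into integrations by parts that throw up ``forbidden'' boundary terms in which $\varphi$ (or $\laps{s}\varphi$) appears undifferentiated, such as $\int_{\R^n}\laps{s}\varphi\, a\,\laps{s} b$; these must cancel against one another, which -- as throughout the paper -- they do by the product rule, but the bookkeeping is heavier than in the earlier sections: two fractional Laplacians are in play simultaneously (one from $\laps{s}\varphi$, one distributed through the product $ab$), and the extra $\laps{s}_x$ has to be moved through the extended products so that each of $a$ and $b$ ends up carrying exactly one $s$-order derivative -- not $\|\laps{2s} b\|$ on one side and $\|a\|$ on the other. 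Achieving this balance, and the $s=1$ endpoint, are the only genuinely non-routine points.
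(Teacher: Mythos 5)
Your first half is sound and is also how the paper starts: passing to the Caffarelli--Silvestre extensions and exploiting the product rule for the degenerate divergence operator $L_s(\cdot) = \dv_{\R^{n+1}}(t^{1-s}\nabla\,\cdot\,)$ to get $L_s(AB) = 2\,t^{1-s}\,\nabla_{\R^{n+1}}A\cdot\nabla_{\R^{n+1}}B$, one does arrive (for $s<1$) at the Wente-type identity
$\mathcal{C} = c\int_{\R^{n+1}_+} t^{1-s}\,\tilde\Phi\,\nabla_{\R^{n+1}}A\cdot\nabla_{\R^{n+1}}B$ with $\tilde\Phi = P_t^s(\laps{s}\varphi) = \laps{s}_x\Phi$; this is precisely the paper's intermediate step. (For $s=1$ the paper works with $\Phi = P_t^1\varphi$ and $t\partial_t$; you describe this correctly.)

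The gap lies in the redistribution, and it is not a bookkeeping issue but a conceptual one. You propose to ``commute $\laps{s}_x$ off $\Phi$ --- by self-adjointness in $x$ and the fractional Leibniz rule in the $x$-variables --- onto the single factors'' so as to produce a genuine $\nabla_{\R^{n+1}}\Phi$ together with the extensions $P_t^s(\laps{s}a)$ and $P_t^s(\laps{s}b)$. This is circular: what you invoke as ``the fractional Leibniz rule'' is exactly the kind of estimate this theorem is proving, and the ``three-term-commutator remainder one differential order lower'' that you promise to ``treat by the same mechanism'' is again the same object, so the recursion never grounds out. It is also unnecessary and, as stated, does not match the black boxes: after moving $\laps{s}_x$ onto $A$, the integrand contains $\nabla_{\R^{n+1}}\nabla_x\bigl(P_t^s(\laps{s}a)\bigr)$, and applying Proposition~\ref{pr:Lpqest} to that factor would return $\|\laps{s_1}(\laps{s}a)\|$ with $s_1>0$ strictly, i.e.\ strictly more than $s$ derivatives on $a$; the exponent $s_1=0$ you would need to recover $\|\laps{s}a\|$ is excluded. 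The trace theorems already convert raw derivatives of $A$ into $\laps{s}a$-norms; applying $\laps{s}_x$ to $A$ first is double-counting.

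What the paper does instead is keep $\tilde\Phi = P_t^s(\laps{s}\varphi)$ untouched (never attempting to recover $\nabla_{\R^{n+1}}\Phi$ for the leading terms), and instead extend the black box: the last sentence of Proposition~\ref{pr:BMOusualestimate} allows $|\nabla_{\R^{n+1}}\Phi^s|$ to be replaced by $|t^{s-1}P_t^s(\laps{s}\varphi)|$ or $|t^{s}\nabla_{\R^{n+1}}P_t^s(\laps{s}\varphi)|$ in all its estimates, because $t^{s-1}\tilde\Phi$ generates the same Carleson measure characterizing $[\varphi]_{BMO}$. With this, the split $\nabla A\cdot\nabla B = \nabla_xA\cdot\nabla_xB + \partial_tA\,\partial_tB$ handles the first term directly, and the second term requires one further integration by parts in $t$ (and then in $x$, using $\partial_t(t^{1-s}\partial_t\,\cdot)=-t^{1-s}\lap_x(\cdot)$ to trade $\partial_t$ for $\nabla_x$) --- but \emph{only} classical derivatives ever move, never a fractional Laplacian. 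That adapted BMO black box and the $\partial_t$-to-$\nabla_x$ trade are what your proposal is missing; without them, the argument you outline does not close.
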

\begin{proof}[Proof for $s=1$]
Assume that $s = 1$, and let $A$, $B$, $\Phi$ be the harmonic extensions as in \eqref{eq:poissonpde} of $a$, $b$, $\varphi$, respectively.

We set
\[
 \mathcal{C} := \left |\int_{\R^n} H_1(a,b) \laps{1} \varphi \right |= \left |\int_{\R^n}  a\ b\ (-\lap) \varphi - \laps{1} a\ b\ \laps{1}\varphi - a\ \laps{1} b\ \laps{1}\varphi \right |.
\]

We show the following estimate from which the claim follows via Proposition~\ref{pr:BMOusualestimate}.
\begin{equation}\label{eq:hopeful}
 |\mathcal{C}| \aleq \int_{\R^{n+1}_+} t |\nabla_{\R^{n+1}} \Phi|\, \brac{|\nabla_{\R^{n+1}} A|\, |\nabla_{\R^{n+1}}^2 B| + |\nabla_{\R^{n+1}}^2 A|\, |\nabla_{\R^{n+1}} B| }. 
\end{equation}

To obtain \eqref{eq:hopeful} we use the integration-by-parts in $t$,
\[
\begin{split}
\mathcal{C}=&\left |\int_{\R^{n+1}_+} t\, \partial_{tt} \brac{A\, B\, \partial_{tt} \Phi - A\, \partial_t B\, \partial_t \Phi- \partial_t A\, B\, \partial_t \Phi}\right |\\
=&\left |\int_{\R^{n+1}_+} t\, \partial_{tt} \brac{(A\, B)\, \partial_{tt} \Phi - \partial_t (A\, B)\, \partial_t \Phi} \right |.
\end{split}
\]
In the next step a cancellation occurs. By the product-rule for $\partial_t$,
\[
=\left |\int_{\R^{n+1}_+} t\, \partial_{t} \brac{(A\, B)\, \partial_{ttt} \Phi - \partial_{tt} (A\, B)\, \partial_t \Phi} \right |.
\]
Due to the harmonicity of the extensions \eqref{eq:poissonpde} we may replace $\partial_{tt} \Phi$ by $-\lap_x \Phi$, and then use integration by parts on $\lap_x$ which does not give boundary values since it is in tangential direction,
\[
\begin{split}
=&\left |\int_{\R^{n+1}_+} t\, \partial_{t} \brac{(A\, B)\, (-\lap_x) \partial_{t} \Phi - \partial_{tt} (A\, B)\, \partial_t \Phi} \right |\\
=&\left |\int_{\R^{n+1}_+} t\, \partial_{t} \brac{\lap_{x,t} (A\, B)\, \partial_{t} \Phi} \right |.
\end{split}
\]
Since $A$ and $B$ are harmonic, $\lap_{x,t} (A\, B) = 2 \nabla_{\R^{n+1}} A \cdot \nabla_{\R^{n+1}} B$, and thus
\[
\begin{split}
=& 2\left |\int_{\R^{n+1}_+} t\, \partial_{t} \brac{\nabla_{\R^{n+1}}A\cdot \nabla_{\R^{n+1}}B\, \partial_{t} \Phi}\right |\\
\leq &2\left |\int_{\R^{n+1}_+} t\, \partial_{t} (\nabla_{\R^{n+1}}A\cdot \nabla_{\R^{n+1}}B)\, \partial_{t} \Phi\right | + 2\left |\int_{\R^{n+1}_+} t\, \nabla_{\R^{n+1}}A\cdot \nabla_{\R^{n+1}}B\, \partial_{tt} \Phi \right |.
\end{split}
\]
Again replacing $\partial_{tt} \Phi$ by $-\lap_x \Phi$ and using integration by parts in $x$ for the second term, we arrive at
\[
 =2\left |\int_{\R^{n+1}_+} t\, \partial_{t} (\nabla_{\R^{n+1}}A\cdot \nabla_{\R^{n+1}}B)\, \partial_{t} \Phi \right |+2\left |\int_{\R^{n+1}_+} t\, \nabla_x \brac{\nabla_{\R^{n+1}}A\cdot \nabla_{\R^{n+1}}B}\, \cdot \nabla_x \Phi \right |.
\]
This proves \eqref{eq:hopeful}.
\end{proof}

\begin{proof}[Proof for $s<1$]
Assume that $s < 1$. Set $\tilde{\varphi} := \laps{s} \varphi$, and let $A$, $B$, $\tilde{\Phi}$ be the $s$-harmonic extensions of $a$, $b$, $\tilde{\varphi}$, respectively. That is \[A(x,t) = P_t^s a(x),\ B(x,t) = P_t^s b(x),\ \tilde{\Phi}(x,t) = P_t^s \tilde{\varphi}(x),\] where $P_t^s$ is the Caffarelli-Silvestre Poisson operator as in \eqref{eq:spoissonpde}. 

This time we aim for the following estimate:
\begin{equation}\label{eq:hopefuls}
\begin{split}
 \mathcal{C} \aleq& \int_{\R^{n+1}_+} t^{1-s} |\tilde{\Phi}|\, |\nabla_{x} A|\, |\nabla_{x} B|\\
 &+ \int_{\R^{n+1}_+} t^{2-s} |\tilde{\Phi}|\, \brac{|\nabla_{x} \nabla_{\R^{n+1}} A|\, |\nabla_{\R^{n+1}} B| + |\nabla_{\R^{n+1}} A|\, |\nabla_{x} \nabla_{\R^{n+1}} B|}  \\
 &+ \int_{\R^{n+1}_+} t^{3-s} |\nabla_{\R^{n+1}} \tilde{\Phi}|\, \brac{|\nabla_{\R^{n+1}} \nabla_x A|\, |\nabla_{\R^{n+1}} B| + |\nabla_{\R^{n+1}} A|\, |\nabla_{\R^{n+1}} \nabla_x B|}
 \end{split}
\end{equation}
Observe that $\tilde{\Phi} = P_t \laps{s} \varphi$, thus Proposition~\ref{pr:BMOusualestimate} applied to \eqref{eq:hopefuls} implies \eqref{eq:Hsbmoest} for $s < 1$.

It remains to establish \eqref{eq:hopefuls}. We use integration-by-parts in $t$ and the representation of $\laps{s} a = c\lim_{t \to 0} t^{1-s} \partial_t A$ from \eqref{eq:spoissonpde},
\[
\begin{split}
\mathcal{C} :=&\left |\int_{\R^n} \brac{\laps{s} (ab) - a \laps{s} b - \laps{s} a\ b} \tilde{\varphi} \right |\\
=&\left |\int_{\R^{n+1}_+}  \partial_{t} \brac{t^{1-s}  A\, B\, \partial_{t} \tilde{\Phi} - t^{1-s} A\,  \partial_t B\, \tilde{\Phi}- t^{1-s} \partial_t A\, B\, \tilde{\Phi}}\right |\\
=&\left |\int_{\R^{n+1}_+}  \partial_{t} \brac{t^{1-s}  (A\, B)\, \partial_{t} \tilde{\Phi} - t^{1-s} \partial_t (AB) \tilde{\Phi}} \right |.
\end{split}
\]
Again we use the product rule for $\partial_t$ and have a cancellation
\[
 =\left |\int_{\R^{n+1}_+}  (A\, B)\, \partial_{t} \brac{t^{1-s} \partial_{t} \tilde{\Phi}} - \partial_{t} \brac{t^{1-s} \partial_t (AB)}\, \tilde{\Phi}\right |.
\]
Since $\partial_{t} \brac{t^{1-s} \partial_{t} \tilde{\Phi}} = -t^{1-s}\lap_x \tilde{\Phi}$ and with an integration by parts in $x$,
\[
 =\left |\int_{\R^{n+1}_+}  L_s(A\, B)\, \tilde{\Phi}\right |,
\]
where we set
\[
  L_s (AB) := t^{1-s}\lap_x (A\, B)\, + \partial_{t} \brac{t^{1-s} \partial_t (AB)}.
\]
By \eqref{eq:spoissonpde}, $L_s(A) = L_s (B) = 0$. On the other hand, we have the product rule
\[
  L_s (A\, B)-L_s(A)\, B-A\, L_s(B) = 2 t^{1-s}\, \nabla_{\R^{n+1}} A \cdot \nabla_{\R^{n+1}} B. 
\]
Consequently,
\[
 \mathcal{C} \leq  2\left |\int_{\R^{n+1}_+} t^{1-s} \nabla_{x} A \cdot \nabla_{x} B\, \tilde{\Phi}\right |+2\left |\int_{\R^{n+1}_+} t^{1-s} \partial_t A \, \partial_t B\, \tilde{\Phi} \right |.
\]
The first term is already of the form in \eqref{eq:hopefuls}. As for the second term, we use the following integration by parts formula in $t$-direction
\[
 \int_{0}^{\infty} f(t)\ dt = -\frac{1}{s} \int_0^\infty t^{s} \partial_t \brac{t^{1-s} f(t)}\ dt.
\]
Thus,
\[
\begin{split}
 &\int_{\R^{n+1}_+} t^{1-s} \partial_t A \, \partial_t B\, \tilde{\Phi}\\
 =&\frac{1}{s}\int_{\R^{n+1}_+} t^s \partial_{t} \brac{t^{1-s} \partial_t A \ t^{1-s} \partial_t B\ \tilde{\Phi}}\\
 =&-\frac{1}{s}\int_{\R^{n+1}_+} t^{2-s} \brac{\lap_x A\ \partial_t B\ \tilde{\Phi}} -\int_{\R^{n+1}_+} t^{2-s} \brac{\partial_t A\ \lap_x  B\ \tilde{\Phi}}\\
 &+\frac{1}{s}\int_{\R^{n+1}_+} t^{2-s}  \partial_t A \  \partial_t B\ \partial_{t} \tilde{\Phi}\\
 =&-\frac{1}{s}\int_{\R^{n+1}_+} t^{2-s} \brac{\lap_x A\ \partial_t B\ \tilde{\Phi}} -\int_{\R^{n+1}_+} t^{2-s} \brac{\partial_t A\ \lap_x  B\ \tilde{\Phi}}\\
 &+\frac{1}{2s^2} \int_{\R^{n+1}_+} t^{2s} \partial_{t} \brac{t^{1-s}  \partial_t A \  t^{1-s}\partial_t B\ t^{1-s} \partial_{t} \tilde{\Phi}}\\
\end{split}
 \]
Now we finish by integrating by parts if $\partial_t$ hits $t^{1-s} \partial_{t} \tilde{\Phi}$.
\end{proof}

\section{\texorpdfstring{$L^1$}{L1}-estimate for a double-commutator}\label{s:new}
The Coifman-Rochberg-Weiss theorem, Theorem~\ref{th:CRW}, fails on $L^1$. More generally, it seems that there is no reason that an $L^1$-analogon of the Riesz-transform estimates in \eqref{eq:rieszgf} holds. As an application of our techniques, we show here a replacement estimate that estimates the commutators of those commutators in $L^1$. Denote with $\Hz$ the Hilbert transform (i.e. the one-dimensional Riesz transform $\Rz_1$).
\begin{theorem}\label{th:newtheorem}
For $s_1, s_2 \in (0,1)$ and $s_1 + s_2 = 1$ and any $p \in (1,\infty)$, $q \in [1,\infty]$ we have for any $f,g \in C_c^\infty(\R^n)$,
\begin{equation}\label{eq:new1}
 \left \|[f,\Hz](\laph g)-[g,\Hz](\laph f) \right \|_{L^1(\R)} \aleq \|\laps{s_1} f\|_{L^{(p,q)}(\R)}\ \|\laps{s_2} g\|_{L^{(p',q')}(\R)}.
 \end{equation}
and
\begin{equation}\label{eq:new2}
\left \|\mathcal{H} \brac{[f,\Hz](\laph g)+[g,\Hz](\laph f)} \right \|_{L^1(\R)} \aleq \|\laps{s_1} f\|_{L^{(p,q)}(\R)}\ \|\laps{s_2} g\|_{L^{(p',q')}(\R)}.
\end{equation}
\end{theorem}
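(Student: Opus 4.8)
The plan is to deduce both \eqref{eq:new1} and \eqref{eq:new2} from a single $BMO$-dual estimate, and then to prove that estimate by the harmonic-extension mechanism used throughout the paper. Set $\Psi_\pm := [f,\Hz](\laph g) \pm [g,\Hz](\laph f)$. Using the one-dimensional identity $\Hz\laph = \partial_x$ and the antisymmetry $\int_\R (\Hz u)\, v = -\int_\R u\, (\Hz v)$ one first records the elementary a priori facts that $\Psi_- \in L^1(\R)$ and $\Hz\Psi_+ \in L^1(\R)$ for $f,g \in C_c^\infty$, so that only the quantitative bounds are in question. The key reduction is: it suffices to show that for every $\varphi \in C_c^\infty(\R)$,
\begin{equation}\label{eq:new-bmo}
 \left| \int_\R \Psi_\pm\, \varphi \right| \aleq [\varphi]_{BMO}\, \|\laps{s_1} f\|_{L^{(p,q)}(\R)}\, \|\laps{s_2} g\|_{L^{(p',q')}(\R)} .
\end{equation}
Granting \eqref{eq:new-bmo}, estimate \eqref{eq:new1} follows by testing against bounded $\varphi$ (since $[\varphi]_{BMO} \aleq \|\varphi\|_{L^\infty}$ and $\|\Psi_-\|_{L^1(\R)} = \sup\{|\int_\R \Psi_-\varphi| : \varphi \in C_c^\infty(\R),\ \|\varphi\|_{L^\infty} \le 1\}$), while \eqref{eq:new2} follows by testing against $\varphi = \Hz\psi$ with $\psi \in C_c^\infty(\R)$, moving $\Hz$ back onto $\Psi_+$, and invoking $\Hz\colon L^\infty \to BMO$ together with $\Hz\psi \in VMO$ and a routine density argument.

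To prove \eqref{eq:new-bmo}, I would harmonically extend $F := P_t^1 f$, $G := P_t^1 g$, $\Phi := P_t^1 \varphi$ on $\R^2_+$, and use $\tilde{\Rz}_1 \Phi := P_t^1(\Hz\varphi)$ as the extension of $\Hz\varphi$, as in Section~\ref{s:CRW}; recall $\partial_t(\tilde{\Rz}_1\Phi) = -c\,\partial_x\Phi$ by \eqref{eq:ptrz1}. Using $\Hz\laph = \partial_x$, the boundary identities of \eqref{eq:poissonpde} (which allow us to replace $\laph g,\laph f$ by $-c\,\partial_t G,\ -c\,\partial_t F$ at $t=0$), and the antisymmetry of $\Hz$, one rewrites
\[
 \int_\R \Psi_\pm\, \varphi = \int_\R \Big[ (F\,\partial_x G \pm G\,\partial_x F)\,\Phi - c\,(F\,\partial_t G \pm G\,\partial_t F)\, \tilde{\Rz}_1\Phi \Big]_{t=0}\, dx .
\]
Two integrations by parts in $t$ (licensed by the decay in Lemma~\ref{pr:pc:maximal}) then lift this boundary integral to $\R^2_+$ at the cost of a factor $t$ and two $t$-derivatives, i.e.\ $\int_{\R^2_+} t\, \partial_{tt}[\,\cdots\,]$.

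The heart of the matter is the cancellation bookkeeping, carried out exactly as in the CLMS, Coifman-Rochberg-Weiss, and Da~Lio-Rivi\`{e}re computations above: one expands $\partial_{tt}$, invokes harmonicity ($\partial_{tt} = -\partial_{xx}$) whenever $\partial_{tt}$ hits an extension, and integrates by parts in $x$ (no boundary terms). The two families of terms, those accompanied by $\Phi$ and those accompanied by $\tilde{\Rz}_1\Phi$, are linked via $\partial_t(\tilde{\Rz}_1\Phi) = -c\,\partial_x\Phi$ and its consequences \eqref{eq:ptrz2}, \eqref{eq:ptrz3}; the $\pm$ sign is chosen precisely so that in the resulting Leibniz expansion the ``bad'' terms, in which all remaining derivatives land on $\Phi$, cancel in pairs. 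What survives is an estimate of the form
\[
 \left| \int_\R \Psi_\pm\, \varphi \right| \aleq \int_{\R^2_+} t\, |\nabla_{\R^2}\Phi|\, \Big( |\nabla_{\R^2}\nabla_x F|\, |\nabla_{\R^2} G| + |\nabla_{\R^2} F|\, |\nabla_{\R^2}\nabla_x G| \Big),
\]
in which $\Phi$ always carries exactly one derivative and the remaining two are split between $F$ and $G$. Applying H\"older in $(x,t)$ with $\frac1p + \frac1{p'} = 1$ and distributing the $t$-weight, the three resulting weighted $\R^2_+$-integrals are matched --- via the trace characterizations, Proposition~\ref{pr:BMOusualestimate} and Proposition~\ref{pr:Lpqest} --- to $[\varphi]_{BMO}$, $\|\laps{s_1} f\|_{L^{(p,q)}}$, and $\|\laps{s_2} g\|_{L^{(p',q')}}$ respectively; it is here that the hypothesis $s_1 + s_2 = 1$ enters, being exactly the balance condition that makes the three $t$-weights compatible. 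This yields \eqref{eq:new-bmo}, hence \eqref{eq:new1} and \eqref{eq:new2}.

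I expect the main obstacle to be precisely this cancellation bookkeeping: verifying that after repeated use of harmonicity and integration by parts in $x$ the combinations $\Psi_-$ and $\Psi_+$ are exactly the ones whose ``all derivatives on $\varphi$'' contributions cancel, and that the leftover $t$-weights close the trace estimates under the single constraint $s_1 + s_2 = 1$ (with no further restriction on $s_1,s_2$ beyond lying in $(0,1)$). A minor, routine point is the soft-analysis reduction of the first step, which rests on the a priori $L^1$-membership of $\Psi_-$ and $\Hz\Psi_+$ noted above together with the standard facts $[\,\cdot\,]_{BMO} \aleq \|\cdot\|_{L^\infty}$ and $\Hz(C_c^\infty) \subset VMO$.
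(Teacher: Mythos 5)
Your central step is the asserted BMO-dual estimate
\[
\Bigl| \int_\R \Psi_\pm\, \varphi \Bigr| \aleq [\varphi]_{BMO}\, \|\laps{s_1} f\|_{L^{(p,q)}(\R)}\, \|\laps{s_2} g\|_{L^{(p',q')}(\R)},
\]
where $\Psi_\pm:=[f,\Hz](\laph g)\pm[g,\Hz](\laph f)$. This is exactly the bound the paper records it could \emph{not} prove: the Remark directly after \eqref{eq:newjacobianform} says so and explains why. If it held, Theorem~\ref{th:newtheorem} would give a Hardy-space bound rather than an $L^1$ bound, which the paper deliberately does not claim. So this step has a genuine gap, and the rest of the reduction (testing against bounded $\varphi$, resp.\ $\varphi=\Hz\psi$) collapses with it.

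Concretely, the cancellation bookkeeping does not yield the form you claim. After one $t$-integration by parts, the rules $\tilde{\Hz}F_t=-F_x$, $F_t=\tilde{\Hz}F_x$, harmonicity, and $x$-integrations by parts, the paper arrives at the exact identity \eqref{eq:newjacobianform},
\[
\int_\R \Psi_-\, \varphi \;=\; 2\int_{\R^2_+} \det\bigl(\nabla_{\R^2} F, \nabla_{\R^2} G\bigr)\,\Phi,
\]
and for $\Psi_+$ the inner-product analogue \eqref{eq:new2:result}. In both, \emph{no} derivative falls on $\Phi$ and there is no $t$-weight; the only handle on $\Phi$ is $\|\Phi\|_{L^\infty(\R^2_+)}\aleq\|\varphi\|_{L^\infty(\R)}$ from \eqref{eq:tinftyLinfty}, after which Proposition~\ref{pr:Lpqest} with $(p_3,q_3)=(\infty,\infty)$, $s_3=0$, $s_1+s_2=1$ closes the $L^1$ duality. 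The cancellations triggered by the $\pm$ sign are the ones that collapse the expression to a Jacobian (resp.\ div--curl) pairing on the half-space; they do not shift a derivative onto $\Phi$. Your proposed intermediate bound $\int_{\R^2_+} t\,|\nabla_{\R^2}\Phi|\bigl(|\nabla_{\R^2}\nabla_x F|\,|\nabla_{\R^2}G|+|\nabla_{\R^2}F|\,|\nabla_{\R^2}\nabla_x G|\bigr)$ therefore does not appear, and trying to manufacture it from the Jacobian identity by a further $t$-integration by parts produces a nonzero boundary contribution at $t=0$ (schematically $\int_\R(fg'-f'g)\varphi$), because the integral is over the half-space only; the Remark phrases the obstruction as the failure of the odd reflection $\tfrac{t}{|t|}\Phi(|t|,x)$ to lie in $BMO(\R^2)$, even though the even reflection does by Proposition~\ref{pr:BMOextension}. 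As a secondary point, even if your intermediate bound were available, Proposition~\ref{pr:BMOusualestimate} (with $s=1$, $\ell=1$) would deliver $[\varphi]_{BMO}\,\|\laps{1}f\|\,\|\laps{1}g\|$, not $\|\laps{s_1}f\|_{L^{(p,q)}}\,\|\laps{s_2}g\|_{L^{(p',q')}}$ with $s_1+s_2=1$ and $s_1,s_2\in(0,1)$; the derivative count does not match the statement. The correct route is the one the paper takes: keep $\Phi$ underived, bound it in $L^\infty$, and apply Proposition~\ref{pr:Lpqest}.
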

\begin{proof}[Proof of \eqref{eq:new1}]
Let
\[
\begin{split}
\mathcal{C} :=& \left |\int_{\R} \brac{[f,\Hz]\laph g-[g,\Hz]\laph f} \varphi \right |\\
 =&\left |\int_{\R} f\ \Hz \laph g\ \varphi + f\ \laph g\ \Hz \varphi - \Hz \laph f\ g\ \varphi - \laph f\ g\ \Hz \varphi \right |.
\end{split}
\]
For the theorem to be proven, by duality, it suffices to show
\begin{equation}\label{eq:newgoal}
 \mathcal{C} \aleq \|\laps{s_1} f\|_{L^{(p,q)}(\R)}\ \|\laps{s_2} g\|_{L^{(p',q')}(\R)}\ \|\varphi\|_{L^\infty(\R)}.
\end{equation}
Let $F := P_t^1 f$, $G := P_t^1 g$, $\Phi := P_t^1 \varphi$ be the respective harmonic extensions. Then, as above, via integration by parts in $t$,
\[
 \mathcal{C} \aleq \left |\int_{\R^2_+} \partial_t \brac{F\, \tilde{\Hz}G_t\, \Phi + F\,  G_t\,  \tilde{\Hz} \Phi - \tilde{\Hz} F_t\, G\, \Phi- F_t\, G\, \tilde{\Hz} \Phi} \right |.
\]
Recall the rules for derivatives of the harmonic extensions of Hilbert transforms:
\begin{equation}\label{eq:hilbertrules}
 \tilde{\Hz} F_t = -F_x, \quad F_t = \tilde{\Hz}F_x.
\end{equation}
Then
\[
\mathcal{C} =\left |\int_{\R^2_+} \partial_t \brac{-F\, G_x\, \Phi + F\, G_t\, \tilde{\Hz} \Phi + F_x\, G\, \Phi- F_t\, G\, \tilde{\Hz} \Phi} \right |.
\]
We compute
\[
\begin{split}
 \mathcal{I} := &\partial_t \brac{-F\, G_x\, \Phi + F\, G_t\, \tilde{\Hz} \Phi + F_x\, G\, \Phi- F_t\, G\, \tilde{\Hz} \Phi}\\
=& \partial_t \brac{-F\, G_x\, \Phi  + F_x\, G\, \Phi}+ \partial_t \brac{F\, G_t\, \tilde{\Hz} \Phi- F_t\, G\, \tilde{\Hz} \Phi}\\
\end{split}
\]
and with a cancellation in the second term,
\[
\begin{split}
=& \brac{-F_t\, G_x\, \Phi  + F_x\, G_t\, \Phi} + \brac{-F\, G_{xt}\, \Phi  + F_{xt}\, G\, \Phi}+ \brac{-F\, G_x\, \Phi_t  + F_x\, G\, \Phi_t}\\
&+ \brac{F\, G_{tt}\, \tilde{\Hz} \Phi- F_{tt}\, G\, \tilde{\Hz} \Phi} + \brac{F\, G_t\, \tilde{\Hz} \Phi_t- F_t\, G\, \tilde{\Hz} \Phi_t}
 \end{split}
 \]
We use \eqref{eq:hilbertrules}, the fact that $\partial_{tt} F = - \partial_{xx} F$,
\begin{equation}\label{eq:new:s1}
\begin{split}
=& \brac{-F_t\, G_x\, \Phi  + F_x\, G_t\, \Phi} + \brac{-F\, G_{xt}\, \Phi  + F_{xt}\, G\, \Phi}+ \brac{-F\, G_x\, \Phi_t  + F_x\, G\, \Phi_t}\\
&+ \brac{-F\, G_{xx}\, \tilde{\Hz} \Phi+ F_{xx}\, G\, \tilde{\Hz} \Phi} + \brac{-F\, G_t\, \Phi_x+ F_t\, G\, \Phi_x}
 \end{split}
 \end{equation}
Next, again with the help of \eqref{eq:hilbertrules},
\[
 -F\, G_{xx}\, \tilde{\Hz} \Phi = -(F\, G_{x}\, \tilde{\Hz} \Phi)_x + F_x\, G_x\, \tilde{\Hz} \Phi+ F\, G_{x}\, \Phi_t
\]
\[
 F_{xx}\, G\, \tilde{\Hz} \Phi = (F_{x}\, G\, \tilde{\Hz} \Phi)_x - F_{x}\, G_x\, \tilde{\Hz} \Phi - F_{x}\, G\, \Phi_t
\]
Plugging this into \eqref{eq:new:s1}, more terms cancel,
\begin{equation}\label{eq:new:s2}
\begin{split}
\mathcal{I}=& \brac{-F_t\, G_x\, \Phi  + F_x\, G_t\, \Phi} + \brac{-F\, G_{xt}\, \Phi  + F_{xt}\, G\, \Phi}\\
&+ \brac{
(F_{x}\, G\, \tilde{\Hz} \Phi)_x-(F\, G_{x}\, \tilde{\Hz} \Phi)_x 
} + \brac{-F\, G_t\, \Phi_x+ F_t\, G\, \Phi_x}.
 \end{split}
 \end{equation}
We repeat this strategy with
\[
 -F\, G_{xt}\, \Phi  = -(F\, G_{t}\, \Phi)_x+F_x\, G_{t}\, \Phi+F\, G_{t}\, \Phi_x,
\]
\[
 F_{xt}\, G\, \Phi = (F_{t}\, G\, \Phi)_x - F_{t}\, G_x\, \Phi - F_{t}\, G\, \Phi_x.
\]
This we plug into \eqref{eq:new:s2}, and arrive at
\[
\begin{split}
\mathcal{I}=& 2\brac{F_x\, G_t\, \Phi-F_t\, G_x\, \Phi} + \brac{F_{t}\, G\, \Phi-F\, G_{t}\, \Phi+
F_{x}\, G\, \tilde{\Hz} \Phi-F\, G_{x}\, \tilde{\Hz} \Phi 
}_x.
 \end{split}
 \]
The second term vanishes when integrating in $x$, and thus
\begin{equation}\label{eq:newjacobianform}
 \mathcal{C} = \left |\int_{\R^2_+}  \mathcal{I} \right | = 2\left |\int_{\R^2_+} \det(\nabla_{\R^2} F, \nabla_{\R^2} G)\ \Phi \right |.
\end{equation}
With Proposition~\ref{pr:Lpqest} we obtain \eqref{eq:newgoal}.
\end{proof}

\begin{remark}
In \eqref{eq:newjacobianform}, having the determinant structure, one might hope to use the Hardy-BMO duality, in form of Theorem~\ref{th:CLMSjac}, to obtain (in view of Proposition~\ref{pr:BMOextension}) an estimate only in terms of $[\varphi]_{BMO}$ instead of $\|\varphi\|_{L^\infty}$. If that was the case, then in Theorem~\ref{th:newtheorem} we had a Hardy-space bound instead of merely the $L^1$ bound. However, we were not able to do this, the reason being that the integral is on the half-space and for a reflection argument we would need to estimate $\frac{t}{|t|} \Phi(|t|,x)$ in $BMO$. However, even though $\Phi(|t|,x)$ is in $BMO$, see Proposition~\ref{pr:BMOextension}, there is no reason $\frac{t}{|t|} \Phi(|t|,x)$ belongs to $BMO$ as well.
\end{remark}

\begin{proof}[Proof of \eqref{eq:new2}]
As in the proof of \eqref{eq:new1} let $F := P_t^1 f$, $G := P_t^1 g$, $\Phi := P_t^1 \varphi$ be the respective harmonic extensions. Then,
\[
\begin{split}
\mathcal{C} :=& \left | \int_{\R} \brac{[f,\Hz](\laph g)+[g,\Hz](\laph f)}\, \mathcal{H} \varphi \right |\\
=&\left |\int_{\R^2_+} \partial_t \brac{F\, \tilde{\Hz}G_t\, \tilde{\Hz}\Phi - F\, G_t\ \Phi + \tilde{\Hz}F_t\,G\,  \tilde{\Hz}\Phi - F_t\, G\, \Phi} \right |.
\end{split}
\]
This time we compute,
\[
 \begin{split}
\mathcal{I} := &\partial_t \brac{F\, \tilde{\Hz}G_t\, \tilde{\Hz}\Phi - F\, G_t\ \Phi + \tilde{\Hz}F_t\,G\,  \tilde{\Hz}\Phi - F_t\, G\, \Phi} \\
=&\partial_t \brac{-(F\, G)_x\, \tilde{\Hz}\Phi } + \partial_t \brac{ - (F\, G)_t\ \Phi} \\
=&-\brac{F\, G\, \tilde{\Hz}\Phi }_{xt} + \partial_t \brac{(F\, G)\, \Phi_t  - (F\, G)_t\ \Phi} \\
=&-\brac{F\, G\, \tilde{\Hz}\Phi }_{xt} + (F\, G)\, \Phi_{tt}  - (F\, G)_{tt}\ \Phi \\
=&-\brac{F\, G\, \tilde{\Hz}\Phi }_{xt} - (F\, G)\, \Phi_{xx}  - (F\, G)_{tt}\ \Phi \\  
 \end{split}
\]
Thus, since $(FG)_{xx} + (FG)_{tt} = 2\nabla_{\R^2} F \cdot \nabla_{\R^2} G$, we have
\begin{equation}\label{eq:new2:result}
 \mathcal{C} = 2\left |\int_{\R^2_+} \nabla_{\R^2} F\cdot \nabla_{\R^2} G\ \Phi\right |.
\end{equation}
We conclude as we did for \eqref{eq:new1}.
\end{proof}
\begin{remark}
Actually, our computations show that the left-hand sides in \eqref{eq:new1} and \eqref{eq:new2} are essentially the same estimates, more precisely,
\[
  \left \|[f,\Hz](\laph g)-[g,\Hz](\laph f) \right \|_{L^1(\R)} = \left \|\mathcal{H} \brac{[f,\Hz](\Hz\laph g)+[\Hz g,\Hz](\laph f)} \right \|_{L^1(\R)}.
\]
To see this, replace in \eqref{eq:new2:result} $g$ with $\Hz g$, that is $\tilde{G} := \tilde{\Hz}G$. Then in view of \eqref{eq:hilbertrules}, the equation \eqref{eq:new2:result} becomes \eqref{eq:newjacobianform},
\[
 2\left |\int_{\R^2_+} \nabla_{\R^2} F\cdot \nabla_{\R^2} G\ \Phi\right | = 2\left |\int_{\R^2_+} \det(\nabla_{\R^2} F,\nabla_{\R^2} \tilde{G})\ \Phi\right |.
\]

\end{remark}

\section{Trace theorems}\label{s:poissonchar}
Characterizations of function spaces via the Poisson potential have a long tradition, see in particular Stein's books \cite{Stein-Singular-Integrals, Stein-Harmonic-Analysis}. In this section we gather such characterizations for the Poisson operator. We postpone references to literature and proofs to Appendix~\ref{s:proofpoisson}. 

Recall the definition of the Poisson extension operator from Section~\ref{s:poissonoperator}.
Let $F^s(x,t) = P_t^s f(x)$ for some $s \in (0,2)$ and $f \in C_c^\infty(\R^n)$. With $\mathcal{M} f$ we denote the Hardy-Littlewood maximal function
\[
 \mathcal{M} f(x) = \sup_{B \ni x} |B|^{-1} \int_B |f|\ dz
\]
where the supremum is over balls $B$ containing $x$.

\begin{proposition}[Pointwise estimates]\label{pr:pc:maximal}
We have for any $k \in \N_0$,
\begin{equation}\label{eq:tinfty}
\sup_{(x,t) \in \R^{n+1}_+} t^{n+k} |\nabla^k_{\R^{n+1}} F^s(x,t)| \leq C_s\, \|f\|_{L^1(\R^n)},
\end{equation}
\begin{equation}\label{eq:tinftyLinfty}
\sup_{(x,t) \in \R^{n+1}_+} t^{k}|\nabla^k_{\R^{n+1}} F^s(x,t)| \leq C_s\, \|f\|_{L^\infty(\R^n)}.
\end{equation}
Also, we have the following estimates in terms of the maximal function,
\begin{equation}\label{eq:possionmaximal}
 \sup_{(y,t):\, |y-x| < t} |F^s(y,t)| \leq C_s\, \mathcal{M}f(x).
\end{equation}
For any $s\leq 1$,
\begin{equation}\label{eq:possionmaximalpt}
 \sup_{(y,t):\, |y-x| < t} |t^{1-s}\partial_t F^s(y,t)| \leq C_s\, \mathcal{M}(\laps{s} f)(x).
\end{equation}
Finally, denoting $\nabla^s = \nabla \lapms{1-s} = \Rz \laps{s}$ what is sometimes called a ``fractional gradient'' (i.e. the vectorial Riesz transform $\Rz$ applied the $\laps{s}$), we have
\begin{equation}\label{eq:possionmaximalptx}
 \sup_{(y,t):\, |y-x| < t} |t^{1-s}\nabla_{x} F^s(y,t)| \leq C_s\, \mathcal{M}(\nabla^{s} f)(x).
\end{equation}
Finally, for any $\sigma > 0$,
\begin{equation}\label{eq:maximalpotential}
\sup_{t > 0} t^{\sigma} |P_t^s f|(x) \aleq \mathcal{M}(\lapms{\sigma} f)(x),
\end{equation}
where $\lapms{\sigma}$ is the Riesz potential.
\end{proposition}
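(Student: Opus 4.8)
The plan is to split the statement into two parts that are proved by quite different mechanisms: the pointwise decay bounds \eqref{eq:tinfty}--\eqref{eq:tinftyLinfty}, obtained by differentiating the explicit kernel, and the maximal-function bounds \eqref{eq:possionmaximal}--\eqref{eq:maximalpotential}, obtained by dominating the relevant convolution kernels by radially decreasing integrable functions --- after, in the last three cases, moving a Fourier multiplier off the kernel and onto $f$. Throughout I write $F^s=p^s_t*f$ with $p^s_t(y):=C_{n,s}\,t^s(|y|^2+t^2)^{-(n+s)/2}$, the kernel from \eqref{eq:poissonkernel}. For the pointwise bounds I would first record the elementary estimate $|\nabla^k_{\R^{n+1}}p^s_t(y)|\le C_k\,t^{-k}\,p^s_t(y)$: each differentiation of the factor $t^s$ produces $t^{-1}$ times it, and, since $w\mapsto|w|^{-(n+s)}$ is homogeneous of degree $-(n+s)$ and smooth on $\R^{n+1}\setminus\{0\}$, each derivative of $(|y|^2+t^2)^{-(n+s)/2}$ costs a factor $(|y|^2+t^2)^{-1/2}\le t^{-1}$. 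Differentiating under the integral sign and using the scaling identities $\|p^s_t\|_{L^1(\R^n)}=\|p^s_1\|_{L^1(\R^n)}=C_s<\infty$ (finite because $s>0$) and $\|p^s_t\|_{L^\infty(\R^n)}=C_s\,t^{-n}$, one gets \eqref{eq:tinfty} by pairing with $\|f\|_{L^1}$ and \eqref{eq:tinftyLinfty} by pairing with $\|f\|_{L^\infty}$.

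Next I would treat the basic maximal estimate \eqref{eq:possionmaximal}. For fixed $t$ the kernel is radially decreasing, $p^s_t(\cdot)=t^{-n}\phi(|\cdot|/t)$ with $\phi(r)=C_{n,s}(1+r^2)^{-(n+s)/2}$ decreasing and --- since $s>0$ --- integrable over $\R^n$; for $|y-x|<t$, splitting into the cases $|x-z|\le 2t$ and $|x-z|>2t$ gives $p^s_t(y-z)\le C\,p^s_{2t}(x-z)$, hence $|F^s(y,t)|\le C(p^s_{2t}*|f|)(x)$, and the classical fact that convolution with a radially decreasing $L^1$ dilation family is controlled by $\mathcal M$ (Stein's books) yields \eqref{eq:possionmaximal}. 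I will reuse this in the nontangential form $\sup_{|y-x|<t}|\eta_t*g(y)|\aleq\|\eta\|_{L^1}\,\mathcal M g(x)$ for any $\eta$ radially decreasing and integrable and any $g\in L^1_{\loc}$ of mild decay.

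For the three remaining estimates I pass to Fourier variables: $\widehat{P^s_t f}(\xi)=\theta_s(t|\xi|)\widehat f(\xi)$ with $\theta_s(r)=c_s\,r^{s/2}K_{s/2}(r)$ the Poisson--Bessel multiplier, $\theta_s(0)=1$, exponentially decaying. From $\tfrac{d}{dr}\big(r^{s/2}K_{s/2}(r)\big)=-r^{s/2}K_{s/2-1}(r)$ and $K_{-\nu}=K_\nu$ one computes $r^{1-s}\theta_s'(r)=-c\,\theta_{2-s}(r)$, i.e.
\[
 t^{1-s}\partial_t P^s_t f = -c\,P^{2-s}_t\big(\laps{s}f\big),
\]
and since $s\le 1$ gives $2-s\in[1,2)$, applying \eqref{eq:possionmaximal} in order $2-s$ to $\laps{s}f$ proves \eqref{eq:possionmaximalpt}. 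The same bookkeeping, using that $\nabla^s=\Rz\laps{s}$ and $\lapms{\sigma}$ have symbols $i\xi|\xi|^{s-1}$ and $|\xi|^{-\sigma}$, gives
\[
 t^{1-s}\nabla_x P^s_t f = Q_t(\nabla^s f),\qquad t^{\sigma}P^s_t f = S_t\big(\lapms{\sigma}f\big),
\]
where $Q_t,S_t$ are the radial multipliers $\psi(t|\xi|)=(t|\xi|)^{1-s}\theta_s(t|\xi|)$ and $\mu(t|\xi|)=(t|\xi|)^{\sigma}\theta_s(t|\xi|)$. Both $\psi$ (for $s\le 1$) and $\mu$ (for $\sigma>0$) are bounded on $[0,\infty)$, smooth on $(0,\infty)$, carry only a positive-power singularity at the origin ($r^{1-s}$, respectively $r^{\sigma}$) and decay exponentially; hence their space-side kernels are bounded near $0$ and decay like $|y|^{-n-(1-s)}$, respectively $|y|^{-n-\sigma}$, so they admit radially decreasing $L^1$ majorants, and after the scaling $q_t(y)=t^{-n}q_1(y/t)$ the reused lemma gives $\sup_{|y-x|<t}|Q_t g(y)|\aleq\mathcal M g(x)$ and $\sup_{t>0}|S_t g(x)|\aleq\mathcal M g(x)$; taking $g=\nabla^s f$ and $g=\lapms{\sigma}f$ yields \eqref{eq:possionmaximalptx} and \eqref{eq:maximalpotential}.

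The main obstacle is the single substantial step in the paragraph above: the claim that a bounded radial Fourier multiplier which is smooth away from the origin, has a $|\xi|^{\gamma}$-type singularity there with $\gamma>0$, and decays exponentially at infinity produces a convolution kernel that is bounded near the origin and decays like $|y|^{-n-\gamma}$, hence is dominated by a radially decreasing integrable function. This is classical --- it is exactly the behaviour of the Poisson and Bessel kernels analysed in Stein's books, and can also be read off from the identification of Besov/Triebel spaces --- and it is precisely the point where the ``deep'' harmonic analysis of the method is concentrated, as announced in the introduction; everything else is bookkeeping with the explicit kernel and the two elementary Bessel-function identities used above, with no appeal to Littlewood--Paley theory.
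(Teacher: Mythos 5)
Your proposal is correct, and it follows the same overall reduction as the paper: every nontangential estimate is brought to Stein's lemma that convolution with a radially decreasing $L^1$ dilation family is dominated by the Hardy--Littlewood maximal function, after rewriting $t^{1-s}\partial_t P_t^s f$, $t^{1-s}\nabla_x P_t^s f$, and $t^\sigma P_t^s f$ as ``good kernel applied to the right derivative of $f$.'' What differs is \emph{how} the kernel/multiplier identities are established. For \eqref{eq:possionmaximalpt} the paper works in physical space: it derives the integral representation $t^{1-s}\partial_t P_t^s f = c\,(|\cdot|^2+t^2)^{(2-s-n)/2}\ast\lap f$ from the ODE satisfied in $t$, then moves a fractional Laplacian onto the kernel via the Dyda--Kuznetsov--Kwasnicki identity $\laps{\gamma}(|x|^2+1)^{(\gamma-n)/2}=c\,(|x|^2+1)^{(-\gamma-n)/2}$ to identify a bounded $L^1$ kernel. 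You instead differentiate the Bessel multiplier on the Fourier side, obtaining the cleaner closed-form identity $t^{1-s}\partial_t P_t^s f=-c\,P_t^{2-s}(\laps{s}f)$, which reduces \eqref{eq:possionmaximalpt} directly to \eqref{eq:possionmaximal} in order $2-s$; this is arguably a more transparent route, and it is in fact the same identity in disguise (applying the DKK formula with $\gamma=2-s$ to the paper's kernel produces exactly $p_t^{2-s}$). For \eqref{eq:possionmaximalptx} and \eqref{eq:maximalpotential} the paper is slightly more explicit, exhibiting the kernel as $\laps{1-s}$ or $\laps{\sigma}$ of the Bessel kernel and using the rough bound \eqref{eq:roughestimate} to check it is radial, bounded, and integrable, whereas you appeal to the classical correspondence between a positive-power singularity $|\xi|^\gamma$ of a bounded, exponentially decaying radial symbol at $\xi=0$ and decay $|y|^{-n-\gamma}$ of the kernel at infinity. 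That is the only point you don't carry out in detail, but as you note it is the same classical fact the paper is implicitly invoking, and it is correct (split the symbol into $|\xi|^\gamma\chi(\xi)$ plus a Schwartz remainder and use $\mathcal F^{-1}|\xi|^\gamma=c_\gamma|y|^{-n-\gamma}$); so there is no gap, merely a choice to cite rather than recompute. One small caution: your kernel-decay argument for \eqref{eq:possionmaximalptx} uses $\gamma=1-s>0$, so it is stated cleanly for $s<1$ (at $s=1$ the multiplier becomes $e^{-|\xi|}$ and the estimate is immediate), which matches the range actually used in the paper.
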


As usual, the norm of the Lebesgue-spaces $L^p$ is defined as
\[
 \|f\|_{L^p(\R^n)} = \brac{\int_{\R^n} |f|^p}^{\frac{1}{p}} \quad p \in [1,\infty),
\]
\[
 \|f\|_{L^\infty(\R^n)} = \esssup_{x \in \R^n} |f(x)|.
\]
A finer scale than Lebesgue-spaces are the Lorentz-spaces $L^{(p,q)}$, $q \in [1,\infty]$ -- see, e.g., \cite{Hunt-1966, Tartar-2007, GrafakosCF}. For $q = p$ they are the same as Lebesgue spaces, $L^{(p,p)} = L^p$. They are defined as follows. 
For measurable functions $f: \R^n \rightarrow \R$ the \emph{decreasing rearrangement} $f^\ast(t)$, $t  > 0$, is defined as
\[
 f^\ast (t):= \inf\left\{s>0: {\mathcal L}^n(\{|f|> s\})\leq t\right\}.
\]
Here, $\mathcal L^n$ denotes the Lebesgue measure. The Lorentz-space norm $\|\cdot \|_{L^{(p,q)}}$ is given by
\begin{equation*}
 \|f\|_{L^{(p,q)}} := \begin{cases}
                      \left(\int\limits_0^\infty (t^{1/p} f^\ast (t)) ^q \frac {dt}{t}\right)^{1/q}  
			&\text{if } p,q \in[1, \infty),\\
                      \sup\limits _{t>0} t^{1/p} f^\ast (t)&\text{if } q=\infty.
                     \end{cases}
\end{equation*}

The fractional Sobolev spaces $W^{\nu,p}$, $\nu \in (0,1)$, have the seminorm
\[
 [f]_{\dot{W}^{\nu,p}(\R^n)} = \brac{\int_{\R^n}\int_{\R^n}  \frac{|f(x)-f(y)|^p}{|x-y|^{n+\nu p}}\ dx\, dy}^{\frac{1}{p}}.
\]
If $p \neq 2$, another fractional Sobolev space, sometimes denoted with $\dot{H}^{\nu,p}$ is defined via the seminorm $\|\laps{\nu} f\|_{L^p(\R^n)}$.

We turn to characterizations for Sobolev spaces. In the following, it is crucial to observe the different orders up to which the characterization holds. 
The general rule is: the order of the derivative on the extension $F^s(x,t)$ has to be strictly larger than the order of the Sobolev space we want to characterize. 
However, and this is very important to observe when $s \neq 1$, the $t$-direction derivatives $t^{1-s} \partial_t$ count only as being ``of order $s$''. This is by construction of the Poisson potentials $P_t^s$: they are supposed to satisfy $\lim_{t\to 0} t^{1-s} \partial_t F^s(x,t) = c \laps{s} f$.

\begin{proposition}[Fractional Sobolev spaces]\label{pr:Sobolevspaces}
The following holds whenever $p \in (1,\infty)$, $q \in [1,\infty]$.

For $s \in (0,2)$, $\nu \in (0,1)$,
\begin{equation}\label{eq:Wspequivalencenablax}
 \brac{\int_{\R^n} \int_{0}^\infty |t^{1-\frac{1}{p}-\nu} \nabla_{x} F^s(x,t)|^p\ dt\ dx}^{\frac{1}{p}} \aeq [f]_{\dot{W}^{\nu,p}(\R^n)}.
\end{equation}
For $s \in (0,2)$, $\nu \in (0,2)$,
\begin{equation}\label{eq:Wspequivalencenablaxx}
 \brac{\int_{\R^n} \int_{0}^\infty |t^{2-\frac{1}{p}-\nu} \nabla^2_{x} F^s(x,t)|^p\ dt\ dx}^{\frac{1}{p}} \aeq [f]_{\dot{W}^{\nu,p}(\R^n)}.
\end{equation}
For $s \in (0,2)$, $\nu \in (0,1)$, $\nu < s$, 
\begin{equation}\label{eq:Wspequivalencedt}
 \brac{\int_{\R^n} \int_{0}^\infty |t^{1-\frac{1}{p}-\nu} \partial_t F^s(x,t)|^p\ dt\ dx}^{\frac{1}{p}} \aeq [f]_{\dot{W}^{\nu,p}(\R^n)}.
\end{equation}

For $s \in (0,2)$, $\nu \in [0,1)$,
\begin{equation}\label{eq:Hspequivalencenablax}
 \left \|x \mapsto \brac{\int_{0}^\infty |t^{\frac{1}{2}-\nu} \nabla_{x} F^s(x,t)|^2\ dt}^{\frac{1}{2}} \right \|_{L^{(p,q)}(\R^n)} \aeq \|\laps{\nu} f\|_{L^{(p,q)}(\R^n)}.
\end{equation}
For $s \in (0,2)$, $\nu \in [0,1)$, $\nu < s$ 
\begin{equation}\label{eq:Hspequivalencedt}
 \left \|x \mapsto  \brac{\int_{0}^\infty |t^{\frac{1}{2}-\nu}\partial_t F^s(x,t)|^2\ dt}^{\frac{1}{2}} \right \|_{L^{(p,q)}(\R^n)} \aeq \|\laps{\nu} f\|_{L^{(p,q)}(\R^n)}.
\end{equation}
For $s \in (0,2)$, $\nu \in [0,2)$,
\begin{equation}\label{eq:Hspequivalencenablaxx}
 \left \|x \mapsto \int_{\R^n} \brac{\int_{0}^\infty |t^{\frac{3}{2}-\nu} \nabla^2_{x} F^s(x,t)|^2\ dt}^{\frac{1}{2}} \right \|_{L^{(p,q)}(\R^n)} \aeq \|\laps{\nu} f\|_{L^{(p,q)}(\R^n)}.
\end{equation}
The estimates \eqref{eq:Hspequivalencenablax}, \eqref{eq:Hspequivalencedt}, \eqref{eq:Hspequivalencenablaxx} also hold for $\nu < 0$ with $\laps{\nu} f$ replaced by the Riesz potential $\lapms{|\nu|} f$.
\end{proposition}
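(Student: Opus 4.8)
The plan is to pass to the Fourier side, recognize every quantity on the left as a continuous Littlewood--Paley piece of $f$ (or of a fractional power of $f$), and then invoke the classical square-function and Besov/Triebel--Lizorkin characterizations of the Poisson--Bessel semigroup, found in Stein's books \cite{Stein-Singular-Integrals, Stein-Harmonic-Analysis} and reproduced in Appendix~\ref{s:proofpoisson}. First I would record that, by the kernel in \eqref{eq:poissonkernel}, $\widehat{P^s_t f}(\xi) = \theta_s(t\abs{\xi})\,\hat f(\xi)$ for a fixed radial profile $\theta_s$; solving the Bessel-type ODE inside \eqref{eq:spoissonpde} one gets $\theta_s(r) = c_s\, r^{s/2} K_{s/2}(r)$ (with $\theta_1(r) = e^{-r}$). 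The facts about $\theta_s$ I would extract and use: it is smooth and strictly positive on $(0,\infty)$, with $\theta_s$ and all its derivatives decaying exponentially as $r \to \infty$; $\theta_s(0) = 1$; and $\theta_s(r) = 1 - c_s r^{s} + o(r^{s})$ near $r=0$, hence $\theta_s'(r)\aeq -r^{s-1}$ as $r\to 0^+$ with $\theta_s'<0$ throughout $(0,\infty)$.

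Next I would differentiate under the Fourier integral to write $t^{m}\nabla_x^{m} F^s(\cdot,t) = \psi^{(m)}_t * f$ and $t\,\partial_t F^s(\cdot,t) = \psi^{(0)}_t * f$, where $\psi_t(y) = t^{-n}\psi(y/t)$ and $\widehat{\psi^{(m)}}(\eta) = i^{m}\eta^{\otimes m}\theta_s(\abs{\eta})$, $\widehat{\psi^{(0)}}(\eta) = \abs{\eta}\,\theta_s'(\abs{\eta})$. By the properties of $\theta_s$, each $\psi^{(j)}$ is an admissible Littlewood--Paley kernel: smooth and rapidly decaying away from the origin, vanishing at the origin to order $m$ (resp. order $s$), and with $\int_0^\infty \abs{\widehat{\psi^{(j)}}(\eta e_1)}^2\,\frac{d\eta}{\eta}$ a positive finite constant (the Tauberian nondegeneracy). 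For the Sobolev-seminorm equivalences \eqref{eq:Wspequivalencenablax}, \eqref{eq:Wspequivalencenablaxx}, \eqref{eq:Wspequivalencedt} the point is then Fubini: since $t^{m}\nabla_x^{m}F^s = \psi^{(m)}_t* f$,
\[
\int_{\R^n}\int_0^\infty \abs{t^{m-\frac1p-\nu}\nabla_x^{m} F^s(x,t)}^{p}\,dt\,dx = \int_0^\infty t^{-\nu p}\,\vrac{\psi^{(m)}_t* f}_{L^p(\R^n)}^{p}\,\frac{dt}{t},
\]
and likewise with $\psi^{(0)}$ for \eqref{eq:Wspequivalencedt}; the right-hand side is the continuous, $\psi^{(j)}$-based form of the homogeneous Besov seminorm $[f]_{\dot B^{\nu}_{p,p}}$, which is admissible exactly for $\nu<1$, $\nu<2$, $\nu<s$ respectively (the vanishing orders just found), and which equals a constant times $[f]_{\dot W^{\nu,p}(\R^n)}$ on those ranges by the classical identifications.

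For the $g$-function equivalences \eqref{eq:Hspequivalencenablax}, \eqref{eq:Hspequivalencedt}, \eqref{eq:Hspequivalencenablaxx} I would first factor out a fractional power, rewriting the multiplier of $t^{m-\nu}\nabla_x^{m}F^s$ as $\widehat{\Psi^{(m)}}(t\xi)\,\abs{\xi}^{\nu}$ with $\widehat{\Psi^{(m)}}(\eta) = i^{m}\eta^{\otimes m}\abs{\eta}^{-\nu}\theta_s(\abs{\eta})$, so that $t^{m-\nu}\nabla_x^{m}F^s(\cdot,t) = \Psi^{(m)}_t * g$ with $g := \laps{\nu} f$ (and $g := \lapms{|\nu|} f$ when $\nu<0$); $\Psi^{(m)}$ is again admissible, of vanishing order $m-\nu>0$ at the origin (forcing $\nu<1$, resp. $\nu<2$), exponentially decaying and nondegenerate, while the $\partial_t$-version gives $\widehat{\Psi^{(0)}}(\eta) = \abs{\eta}^{1-\nu}\theta_s'(\abs{\eta})$ of vanishing order $s-\nu$, which is where $\nu<s$ enters in \eqref{eq:Hspequivalencedt}. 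The left-hand sides then equal $\vrac{S_{\Psi^{(j)}} g}_{L^{(p,q)}}$ with $S_{\Psi} g(x) := \big(\int_0^\infty \abs{\Psi_t * g(x)}^2\,\frac{dt}{t}\big)^{1/2}$, and I would quote the vector-valued ($L^2(dt/t)$-valued) Littlewood--Paley--Calder\'on--Zygmund estimate $\vrac{S_{\Psi}g}_{L^p}\aleq\vrac{g}_{L^p}$ for $1<p<\infty$, the reverse bound via the reproducing identity $g = c^{-1}\int_0^\infty (\Psi_t)^{\ast}\Psi_t\, g\,\frac{dt}{t}$ together with Cauchy--Schwarz in $t$ and duality, and finally real interpolation between two exponents $p_0<p<p_1$ to pass to the Lorentz scale $L^{(p,q)}$, $1\le q\le\infty$ (using the duality of Lorentz spaces for the lower bound); the $\nu<0$ statements are the identical computation with $\lapms{|\nu|} f$ in place of $\laps{\nu} f$.

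The part I expect to be the real obstacle is the appendix-level verification that the Bessel profile $\theta_s$ really has the claimed regularity, exponential decay, positivity and sign of $\theta_s'$, and — more to the point — that the kernels $\psi^{(j)}$, $\Psi^{(j)}$, which are only finitely smooth at the origin whenever $\nu$ or $s$ is non-integral, still satisfy all the admissibility hypotheses (cancellation, decay, Tauberian nondegeneracy) of the Besov and continuous-square-function characterizations; once that is granted, the rest is the classical machinery. The careful bookkeeping of the vanishing orders of the $\psi^{(j)}$ is precisely what produces the restriction $\nu<s$ in \eqref{eq:Wspequivalencedt} and \eqref{eq:Hspequivalencedt}.
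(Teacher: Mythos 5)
Your proposal takes essentially the same approach as the paper: both identify the Poisson--Bessel extension as a Fourier multiplier of radial profile $\theta_s(t|\xi|)$, read the admissible ranges of $\nu$ off the vanishing order of the relevant derived multiplier at the origin (which is exactly where the Fourier growth estimates of Section~\ref{s:fouriertransform} enter), and then reduce to a general convolution-operator characterization of Besov and Triebel--Lizorkin spaces. The paper simply cites Bui--Candy \cite[Theorems 1.1, 1.3]{BuiCandy-2015} and notes that their cancellation condition (C1) must be verified, whereas you unpack what that verification means for the Bessel profile and reassemble the conclusion from classical square-function theory together with explicit real interpolation to reach the Lorentz scale; the mathematical content is the same.
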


We also record the following characterizations in terms of so-called nontangential square functions.
\begin{proposition}[Square function estimates]\label{pr:Lorentzspaces}
For $1 < p < \infty$, $q \in [1,\infty]$ and $s \in (0,1]$, and any $\nu \in [0,s)$,
\begin{equation}\label{eq:squarefctchar}
\left \|x \mapsto \left (\int_{(y,t): |y-x|<t} t^{1-2\nu-n}| \partial_t F^s(y,t) |^2\ dy\, dt \right )^{\frac{1}{2}} \right \|_{L^{(p,q)}(\R^n)} \aeq \|\laps{\nu} f\|_{L^{(p,q)}(\R^n)}.
\end{equation}
If we replace $|\partial_t F^s(y,t)|$ by $|\nabla_x F^s(y,t)|$ this estimate holds for any $\nu \in [0,1)$.

Moreover, for $s \in (0,1]$, $\nu \in (0,1+s)$,
\begin{equation}\label{eq:squarefctnablachar}
\left \|x \mapsto \left (\int_{(y,t): |y-x|<t} t^{3-2\nu-n}|\nabla_x \nabla_{\R^{n+1}} F^s(y,t) |^2\ dy\, dt \right )^{\frac{1}{2}} \right \|_{L^{(p,q)}(\R^n)} \aeq \|\laps{\nu} f\|_{L^{(p,q)}(\R^n)}
\end{equation}
\end{proposition}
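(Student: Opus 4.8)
The plan is to reduce all three equivalences to the corresponding \emph{vertical} square-function characterizations already recorded in Proposition~\ref{pr:Sobolevspaces}, using the classical equivalence between Lusin area integrals and Littlewood--Paley $g$-functions. Fix $\nu$ in the range allowed by the statement and set $h := \laps{\nu} f$. Since $P^s_t$ and $\laps{\nu}$ (and $\lapms{\nu}$, $\nu \ge 0$) commute as $x$-Fourier multipliers, writing $f = \lapms{\nu} h$ lets us rewrite each left-hand side as
\[
 S_\theta h(x) := \brac{\int_{(y,t):\, |y-x| < t} |\theta_t h(y)|^2\ \frac{dy\, dt}{t^{n+1}}}^{\frac{1}{2}},
\]
where $\theta_t$ is, respectively, $t^{1-\nu}\, \partial_t P^s_t \lapms{\nu}$, or $t^{1-\nu}\, \nabla_x P^s_t \lapms{\nu}$, or $t^{2-\nu}\, \nabla_x\nabla_{\R^{n+1}} P^s_t \lapms{\nu}$. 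A scaling check shows that in each case $\theta_t$ is a dilation family $\theta_t h = \psi_t \ast h$ with $\psi_t(z) = t^{-n}\psi(z/t)$, $\psi$ radial, and Fourier profile $\widehat{\psi}(\xi) = m(|\xi|)$; the weights in $t$ are tuned so that $m(0) = 0$ and $m(r) \to 0$ as $r \to \infty$, with the behaviour near $r = 0$ dictated by the Caffarelli--Silvestre symbol of $P^s_t$ (whose $t$-derivative behaves like $(t|\xi|)^{s-1}$ near the origin, cf.\ \eqref{eq:spoissonpde}). The convergence of $\int_0^\infty |m(r)|^2\,\frac{dr}{r}$ is precisely what the restrictions $\nu < s$, $\nu < 1$, and $0 < \nu < 1 + s$ encode.

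It therefore suffices to prove $\|S_\theta h\|_{L^{(p,q)}(\R^n)} \aeq \|h\|_{L^{(p,q)}(\R^n)}$. Let $g_\theta h(x) := \brac{\int_0^\infty |\theta_t h(x)|^2\,\frac{dt}{t}}^{\frac{1}{2}}$ be the associated vertical square function. Fubini's theorem gives the exact identity $\int_{\R^n} (S_\theta h)^2 = c_n \int_{\R^n} (g_\theta h)^2$, so the two coincide on $L^2 = L^{(2,2)}$; combined with Plancherel and the nondegeneracy $\int_0^\infty |m(r)|^2\,\frac{dr}{r} = c \neq 0$ this identifies the common value with $\|h\|_{L^2}$ up to constants --- which is exactly what Proposition~\ref{pr:Sobolevspaces} (equations \eqref{eq:Hspequivalencedt}, \eqref{eq:Hspequivalencenablax}, \eqref{eq:Hspequivalencenablaxx} for $p=q=2$, rewritten via $f = \lapms{\nu}h$) records. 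For general $p \in (1,\infty)$ one invokes the standard square-function theorem, $\|S_\theta h\|_{L^p} \aeq \|g_\theta h\|_{L^p} \aeq \|h\|_{L^p}$, valid for any Littlewood--Paley family $\theta_t$ with the usual size, smoothness and cancellation bounds; for $s = 1$ this is classical and is in Stein's books \cite{Stein-Singular-Integrals, Stein-Harmonic-Analysis}, and for $s \in (0,2)$ it follows by the same vector-valued Calder\'on--Zygmund argument once the asymptotics of the Poisson--Bessel symbol of \cite{CaffarelliSilvestre07} are granted. Finally, the Lorentz-space version with $q \in [1,\infty]$ follows by real interpolation, since both $h \mapsto S_\theta h$ and its reverse estimate are of strong type $(p_i,p_i)$ on an open interval of exponents around any prescribed $p$.

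I expect the main obstacle to be the lower bound $\|h\|_{L^{(p,q)}} \aleq \|S_\theta h\|_{L^{(p,q)}}$. Whereas the upper bound only uses size, smoothness and cancellation of the kernel $\psi$, the lower bound needs a Calder\'on-type reproducing formula adapted to $\theta_t$, hence a careful verification that $\int_0^\infty |m(r)|^2\,\frac{dr}{r}$ is a finite, nonzero constant and that $m$ is smooth with enough decay; this is the one place where the Bessel-function asymptotics of the Caffarelli--Silvestre symbol genuinely enter, and it is there that the stated constraints on $\nu$ are forced. The remaining steps --- the Fubini identity for the $L^2$ case, the interpolation to Lorentz spaces, and the rewriting via $f = \lapms{\nu} h$ --- are routine, and for $s = 1$ everything can simply be quoted from \cite{Stein-Singular-Integrals, Stein-Harmonic-Analysis}.
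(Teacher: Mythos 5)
Your proposal matches the paper's approach: both reduce to Stein's nontangential square-function theorem (cited in the paper as \cite[Chapter I, \textsection 8.23, p.~46]{Stein-Harmonic-Analysis}, which already gives the Lorentz-space equivalence directly) and then verify that the relevant convolution family $q_t$ is a radial Littlewood--Paley family with the required size, decay, and cancellation. The only real difference is in how the kernel hypotheses are verified: you work on the Fourier side using the Caffarelli--Silvestre symbol asymptotics of $P^s_t$, whereas the paper works in physical space, writing
\[
 t^{1-\sigma}\partial_t p_t^s \ast f = q_t \ast \laps{\sigma} f
\]
via the explicit representation $t^{1-s}\partial_t P_t f = c\int_{\R^n}(|x-z|^2+t^2)^{\frac{2-s-n}{2}}\Delta f(z)\,dz$ of \eqref{eq:tptptfrep}; both routes are valid, and the Fourier bounds you need are in fact recorded in the paper's appendix (Section~\ref{s:fouriertransform}). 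Your final interpolation step from $L^p$ to $L^{(p,q)}$ is unnecessary given the cited Lorentz-space form of Stein's theorem, but it is a correct fallback. The constraints $\nu < s$, $\nu < 1$, $\nu < 1+s$ arise in both arguments from the same source, namely finiteness of the square-function integral near $t = 0$.
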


Next, we consider BMO. Denoting $(f)_B \equiv \mvint_{B} f:= |B|^{-1} \int_B f$ the mean value integral on $B$, the BMO-seminorm is given by
\begin{equation}\label{eq:BMOseminorm}
 [f]_{BMO} = \sup_{B \subset \R^n} \mvint_{B} \abs{f-(f)_B},
\end{equation}
where the supremum is over balls $B \subset \R^n$. There is a well-known relation between BMO and certain Carleson-measures on $\R^{n+1}_+$. This takes the following form.
\begin{proposition}[BMO-Characterization]\label{pr:BMOcharacterization}
For $s \in (0,2)$,
\begin{equation}\label{eq:BMOequivalence}
 [f]_{BMO(\R^n)} \aeq  \sup_{B \subset \R^n} \brac{|B|^{-1}\int_{T(B)}  t|\nabla_{\R^{n+1}} F^s(y,t)|^2\, dy\, dt}^{\frac{1}{2}}.
\end{equation}
Here the supremum is over balls $B \subset \R^n$ and $T(B)$ is the ``tent'' over $B$ in $\R^n$, i.e. $T(B_r(x_0)) = \{(x,t) \in \R^{n+1}_+: |x-x_0| < r-t\}$.
\end{proposition}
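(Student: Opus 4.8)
The plan is to establish the two directions of \eqref{eq:BMOequivalence} separately; throughout one may assume $f\in C_c^\infty(\R^n)$, so that both sides are a priori finite and the content is their equivalence with constants depending only on $n$ and $s$. Write $B=B_r(x_0)$ for a generic ball, abbreviate the candidate Carleson measure by $d\mu:=t\,|\nabla_{\R^{n+1}}F^s(y,t)|^2\,dy\,dt$, and set $N:=\sup_{B}\brac{|B|^{-1}\mu(T(B))}^{\frac12}$ for the right-hand side of \eqref{eq:BMOequivalence}. For $s=1$ this is the classical Fefferman--Stein theorem (see Stein's books \cite{Stein-Harmonic-Analysis}); I indicate the argument in a way that makes the $s$-dependence visible.

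\textbf{The bound $N\aleq[f]_{BMO}$.} Fix $B=B_r(x_0)$, let $2B=B_{2r}(x_0)$, and split $f=f_1+f_2+(f)_{2B}$ with $f_1:=(f-(f)_{2B})\,\chi_{2B}$ and $f_2:=(f-(f)_{2B})\,\chi_{\R^n\setminus 2B}$; by linearity of $P_t^s$ the constant $(f)_{2B}$ does not contribute to $\nabla_{\R^{n+1}}F^s$. For the local part, the $L^2$-form of John--Nirenberg gives $\|f_1\|_{L^2}^2\aleq|B|\,[f]_{BMO}^2$, and the global Littlewood--Paley identity --- the case $\nu=0$, $p=q=2$ of \eqref{eq:Hspequivalencenablax} together with \eqref{eq:Hspequivalencedt} --- yields $\int_{\R^{n+1}_+}t\,|\nabla_{\R^{n+1}}F_1^s|^2\aeq\|f_1\|_{L^2}^2\aleq|B|\,[f]_{BMO}^2$. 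For the far part, the explicit kernel \eqref{eq:poissonkernel} gives $|\nabla_x P_t^s(z)|\aleq t^s(|z|^2+t^2)^{-\frac{n+s+1}{2}}$ and $|\partial_t P_t^s(z)|\aleq t^{s-1}(|z|^2+t^2)^{-\frac{n+s}{2}}$; since $|y-z|\aeq|x_0-z|$ whenever $(y,t)\in T(B)$ and $z\notin 2B$, a dyadic decomposition of $\R^n\setminus 2B$ and John--Nirenberg produce $|\nabla_{\R^{n+1}}F_2^s(y,t)|\aleq[f]_{BMO}\,t^{s-1}r^{-s}$ on $T(B)$ (the series $\sum_k k\,2^{-ks}$ converges since $s>0$), so that $\int_{T(B)}t\,|\nabla_{\R^{n+1}}F_2^s|^2\aleq[f]_{BMO}^2\,r^{-2s}|B|\int_0^r t^{2s-1}\,dt\aleq|B|\,[f]_{BMO}^2$. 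Summing, $\mu(T(B))\aleq|B|\,[f]_{BMO}^2$ uniformly in $B$, i.e.\ $N\aleq[f]_{BMO}$.

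\textbf{The bound $[f]_{BMO}\aleq N$.} Here I would use $\mathcal H^1$--$BMO$ duality. Polarizing (via Plancherel) the $L^2$-identity used above gives, for a finite linear combination $g$ of $\mathcal H^1$-atoms with $G^s:=P_t^sg$, the reproducing formula
\[
 \int_{\R^n} f\,g\;=\;c_s^{-1}\int_{\R^{n+1}_+}t\,\nabla_{\R^{n+1}}F^s\cdot\nabla_{\R^{n+1}}G^s\,dy\,dt,
\]
where $0<c_s<\infty$ (nonvanishing because the radial Fourier multiplier of $P_1^s$ is nonconstant, finite by its decay). Bounding the right-hand side by $c_s^{-1}\int_{\R^{n+1}_+}t\,|\nabla_{\R^{n+1}}F^s|\,|\nabla_{\R^{n+1}}G^s|$ and invoking the classical tent-space (Carleson) duality $\int_{\R^{n+1}_+}|\Psi|\,|\Theta|\,t^{-1}\,dy\,dt\aleq\|\Psi\|_{\mathcal C}\,\|\Theta\|_{\mathcal A}$, where $\|\Psi\|_{\mathcal C}^2:=\sup_B|B|^{-1}\int_{T(B)}|\Psi|^2\,\frac{dy\,dt}{t}$ and $\|\Theta\|_{\mathcal A}:=\bigl\|x\mapsto(\int_{(y,t):\,|y-x|<t}|\Theta|^2\,\frac{dy\,dt}{t^{n+1}})^{\frac12}\bigr\|_{L^1}$, applied with $\Psi=t\,\nabla_{\R^{n+1}}F^s$ and $\Theta=t\,\nabla_{\R^{n+1}}G^s$, gives $\|\Psi\|_{\mathcal C}=N$ and $\|\Theta\|_{\mathcal A}\aeq\|g\|_{\mathcal H^1}$ by the area-integral characterization of the Hardy space (the $p=1$ analogue of \eqref{eq:squarefctchar}). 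Hence $|\int_{\R^n}fg|\aleq N\,\|g\|_{\mathcal H^1}$; taking the supremum over $\|g\|_{\mathcal H^1}\le1$ and using $(\mathcal H^1)^\ast=BMO$ yields $[f]_{BMO}\aleq N$.

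\textbf{The main obstacle.} The first bound is bookkeeping once the kernel estimates and John--Nirenberg are granted. The delicate point is the converse, which rests on two classical facts in their $P_t^s$-versions: the reproducing identity (whose only subtlety is the verification $0<c_s<\infty$, i.e.\ non-degeneracy and decay of the generalized Poisson multiplier) and, above all, the area-integral characterization $\|\Theta\|_{\mathcal A}\aeq\|g\|_{\mathcal H^1}$ of the Hardy space --- note Proposition~\ref{pr:Lorentzspaces} is stated only for $p>1$. For $s=1$ this is Fefferman--Stein; for general $s\in(0,2)$ I would reduce to $s=1$ via the subordination $P_t^sf=\int_0^\infty P_\tau^1 f\,d\rho_{s,t}(\tau)$ with an explicit density $\rho_{s,t}$, or rerun the area-function argument directly from the pointwise kernel bounds of Proposition~\ref{pr:pc:maximal}. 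This $\mathcal H^1$-input for $s\ne1$ is where the real work lies; the rest is a routine adaptation of the classical scheme.
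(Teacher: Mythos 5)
Your proposal is correct in substance, and it is not a different route from the paper's but rather a worked-out version of it: the paper's ``proof'' of Proposition~\ref{pr:BMOcharacterization} consists entirely of the citation to Stein, \emph{Harmonic Analysis}, Ch.~IV, \S 4.3, Theorem~3 (the Fefferman--Stein Carleson-measure characterization of BMO) together with the remark in \S 4.4.3 that replaces the Poisson kernel by a general kernel $\varphi_t\ast f$ with integrability, decay, and cancellation; your two-step argument --- local/global splitting with John--Nirenberg and the $L^2$ square-function identity for the easy direction, and $\mathcal H^1$--BMO duality via the polarized Plancherel identity and tent-space duality for the converse --- is precisely the content of that theorem and remark, with the kernel estimates for $p_t^s$ checked explicitly. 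You also correctly isolate the one genuinely nontrivial ingredient for $s\neq1$: the $L^1$ (i.e.\ $\mathcal H^1$) endpoint of the area-integral characterization, which \eqref{eq:squarefctchar} as stated does not cover. One small caution on your suggested reduction: the subordination of $P_t^s$ is to the \emph{heat} semigroup (this is visible from \eqref{eq:Fourierpoisson}, which writes $\mathcal F(p_t^s)$ as an integral of Gaussians), not to the Poisson semigroup $P_\tau^1$; indeed for $s>1$ the multiplier $\widehat{p_1^s}(|\xi|)$ has vanishing derivative at $|\xi|=0$ and hence cannot be completely monotone, so a representation $p_t^s=\int_0^\infty p_\tau^1\,d\rho_{s,t}(\tau)$ with $\rho_{s,t}\ge0$ does not exist in that range. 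Your fallback --- rerunning the area-function argument directly from the pointwise kernel and cancellation estimates (Proposition~\ref{pr:pc:maximal} and \S\ref{s:fouriertransform}) --- is the robust route and is exactly what Stein's \S 4.4.3 remark provides.
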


As an interesting observation, we also state the following result which treats even reflection of the harmonic extension.
\begin{proposition}\label{pr:BMOextension}
Let $f \in C_c^{\infty}(\R^n)$. We set $F^e$ to be the harmonic extension to $\R^{n+1}_+$ evenly reflected to a function on $\R^{n+1}$. That is,
\[
 F^e(x,t) := P^1_{|t|} f(x).
\]
Then we have
\[
 [F^e]_{BMO(\R^{n+1})} \aleq \|f\|_{BMO(\R^n)}.
\]
\end{proposition}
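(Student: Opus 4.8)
The plan is to reduce the BMO bound for the even reflection $F^e$ on $\R^{n+1}$ to the Carleson-measure characterization of $BMO(\R^n)$ for $f$, namely Proposition~\ref{pr:BMOcharacterization}. First I would invoke the John--Nirenberg-type reduction: it suffices to bound, for every ball $\mathcal{B}=\mathcal{B}_r(z_0,t_0)\subset\R^{n+1}$, the mean oscillation $\mvint_{\mathcal B}|F^e-(F^e)_{\mathcal B}|$ by $\|f\|_{BMO(\R^n)}$. Because $F^e(x,t)=P^1_{|t|}f(x)$ is even in $t$, it is enough to treat balls with $t_0\ge 0$, and one splits into two regimes according to whether $\mathcal B$ is ``high'' ($t_0\ge 2r$, so $\mathcal B$ does not meet the slab $\{|t|<r\}$ and lives at scale $t_0$) or ``low'' ($t_0<2r$, so $\mathcal B$ is comparable to a half-ball touching the hyperplane $\{t=0\}$).

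In the high regime the estimate is soft: on $\mathcal B$ we have $t\approx t_0$, and by the Poisson-kernel gradient bound (this is essentially \eqref{eq:tinfty}/\eqref{eq:tinftyLinfty} applied after subtracting a local mean of $f$, or directly the standard fact that $|\nabla_{\R^{n+1}}P^1_tf(x)|\lesssim t^{-1}[f]_{BMO}$) one gets $\sup_{\mathcal B}|\nabla_{\R^{n+1}}F^e|\lesssim t_0^{-1}[f]_{BMO}$, and then $\mvint_{\mathcal B}|F^e-(F^e)_{\mathcal B}|\lesssim r\,\sup_{\mathcal B}|\nabla_{\R^{n+1}}F^e|\lesssim (r/t_0)[f]_{BMO}\le [f]_{BMO}$. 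One must also replace $\|f\|_{BMO}$ by $[f]_{BMO}$ plus a (harmless) reference to the fact that the constant function part drops out; I would just note that the harmonic extension commutes with subtracting constants, so WLOG one may compute oscillations and use $[f]_{BMO}$ throughout, then absorb the additive constant at the end.

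The low regime is the heart of the matter. Here $\mathcal B\subset \mathcal B_{Cr}(z_0,0)$ for a fixed dimensional constant $C$, and by evenness it is controlled by the oscillation of $F^1$ over the half-ball $\mathcal B^+_{Cr}(z_0,0)\cap\R^{n+1}_+$. By Poincaré's inequality on this half-ball, $\mvint |F^e-(F^e)_{\mathcal B}|^2\lesssim r^2\,\mvint |\nabla_{\R^{n+1}}F^1|^2$, and the region of integration is contained in the tent $T(B_{C'r}(z_0))$ over the $\R^n$-ball $B_{C'r}(z_0)$. Since on that region $t\lesssim r$, we bound $r^2|\nabla_{\R^{n+1}}F^1|^2\lesssim r\,t\,|\nabla_{\R^{n+1}}F^1|^2$, so
\[
 \mvint_{\mathcal B}|F^e-(F^e)_{\mathcal B}|^2
 \lesssim |B_{C'r}(z_0)|^{-1}\int_{T(B_{C'r}(z_0))} t\,|\nabla_{\R^{n+1}}F^1(y,t)|^2\,dy\,dt
 \lesssim [f]_{BMO(\R^n)}^2,
\]
the last step being exactly Proposition~\ref{pr:BMOcharacterization} with $s=1$. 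Taking square roots and combining with the high regime gives $[F^e]_{BMO(\R^{n+1})}\lesssim [f]_{BMO(\R^n)}\le \|f\|_{BMO(\R^n)}$.

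The main obstacle is the bookkeeping in the low regime: making precise that a generic $(n+1)$-ball meeting $\{|t|<2r\}$ is, after even reflection, comparable to a fixed dilate of a tent $T(B)$ over an $\R^n$-ball, and that the Poincaré constant and the volume comparisons $|\mathcal B|\approx |B|\cdot r\approx r^{n+1}$ are uniform. Once the geometry is pinned down, the only analytic input is the Carleson characterization already quoted as a black box, so no further harmonic analysis is needed; the high regime is then genuinely routine from the gradient decay estimate. A secondary (minor) point is verifying that the across-the-boundary matching of $F^e$ is Lipschitz in a weak enough sense that Poincaré applies on the full $(n+1)$-ball rather than just the half-ball — but evenness makes $F^e$ continuous with $\partial_t F^e$ odd (hence vanishing trace in the distributional sense along $\{t=0\}$), so $F^e\in W^{1,2}_{\mathrm{loc}}$ and Poincaré on the ball is legitimate.
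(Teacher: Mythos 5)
Your overall architecture (split into a regime away from $\{t=0\}$ and a regime touching it, handle the first by the pointwise gradient decay $|\nabla_{\R^{n+1}}P^1_t f|\lesssim t^{-1}[f]_{BMO}$, handle the second by the Carleson characterization) is sensible and the high regime is essentially correct. However, there is a genuine gap in the low regime. You claim that, since $t\lesssim r$ on the tent, one may bound $r^2|\nabla_{\R^{n+1}}F^1|^2\lesssim r\,t\,|\nabla_{\R^{n+1}}F^1|^2$. The inequality runs the other way: $t\lesssim r$ gives $r\,t\lesssim r^2$, so $r^2|\nabla F^1|^2\gtrsim r\,t\,|\nabla F^1|^2$, which is useless. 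This is not a fixable sign slip but a real obstruction: after Poincar\'e you would need to control
\[
\frac{1}{r^{n-1}}\int_{T(B_{C'r})}|\nabla_{\R^{n+1}}F^1(y,t)|^2\,dy\,dt
\]
by $[f]_{BMO}^2$, but for a genuine BMO datum the gradient satisfies only the sharp decay $|\nabla F^1|\lesssim t^{-1}[f]_{BMO}$, so the \emph{unweighted} integral $\int_{T(B)}|\nabla F^1|^2\,dy\,dt$ can diverge as $t\to 0$; it is precisely the extra factor of $t$ in the Carleson characterization that makes the quantity finite. (The finiteness you get for $f\in C_c^\infty$ depends on smoothness norms of $f$, not on $[f]_{BMO}$, so it cannot feed into the desired bound.) A rescue in the spirit of your approach is possible but requires replacing the Poincar\'e step: for instance, subtract the constant $F^1(z_0,r)$ rather than the ball average and estimate $|F^e(x,t)-F^1(z_0,r)|$ by integrating the gradient along a vertical-then-horizontal path using the pointwise bound $|\nabla F^1|\lesssim s^{-1}[f]_{BMO}$; this yields a factor $\log(r/|t|)$ whose square is integrable in $t$ over $(0,r)$, and one recovers the claim without ever invoking the Carleson characterization.

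The paper's argument is structured quite differently and sidesteps the issue. It first replaces $F^e$ by the explicitly averaged function $G(x,t):=\mvint_{B_{|t|}(x)}f$ via Lemma~\ref{la:meanvaluecompare}, which shows $\|F^e-G\|_{L^\infty(\R^{n+1})}\lesssim[f]_{BMO}$, and then proves the BMO bound for $G$ directly from the definition of BMO for $f$ by purely set-theoretic/averaging manipulations (Lemmas~\ref{la:BMOawayest} and~\ref{la:BMOcloseest}), never touching derivatives, square functions, or Carleson measures. That route is more elementary and avoids exactly the divergence your Poincar\'e step runs into.
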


We turn to H\"older- and Lipschitz spaces. We denote the H\"older semi-norm by, $\nu > 0$
\[
 [f]_{C^{\nu}(\R^n)} := \begin{cases} 
                         \sup\limits_{x \neq y \in \R^n} \frac{|\nabla^{\lfloor \nu \rfloor} f(x)-\nabla^{\lfloor \nu \rfloor}f(y)|}{|x-y|^{\nu-\lfloor \nu\rfloor}} \quad &\mbox{if $\nu \not \in \N$}\\
                         \|\nabla^\nu f\|_{L^\infty} \quad &\mbox{if $\nu \in \N$}\\
                        \end{cases}
\]
As usual we will denote $[f]_{\lip} = [f]_{C^{1}}$.

\begin{proposition}[H\"older spaces]\label{pr:hoelder}
For $\nu \in (0,s)$,
\begin{equation}\label{eq:Hoelderequivalence}
 \sup_{(x, t) \in \R^{n+1}_+ } t^{1-\nu}|\partial_t F^s(x,t)| \aeq [f]_{C^{\nu}(\R^n)}.
\end{equation}
and for any $\nu \in (0,1]$,
\begin{equation}\label{eq:Hoelderequivalencelimit}
 \sup_{(x, t) \in \R^{n+1}_+ } t^{1-\nu}|\nabla_{x} F^s(x,t)| \aleq [f]_{C^{\nu}(\R^n)}.
\end{equation}
\end{proposition}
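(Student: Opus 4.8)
\emph{Setup and strategy.}
Write $p^s_t(y)=C_{n,s}\,t^s(|y|^2+t^2)^{-(n+s)/2}$ for the kernel in \eqref{eq:poissonkernel}, so that $F^s(x,t)=(p^s_t\ast f)(x)$ and, by the normalization defining $C_{n,s}$, $\int_{\R^n}p^s_t\,dy=1$ for every $t>0$. Differentiating under the integral, $\int_{\R^n}\partial_t p^s_t=\int_{\R^n}\nabla_x p^s_t=0$. I would prove the two equivalences by treating separately the upper bounds (the ``$\aleq$'' directions, which give all of \eqref{eq:Hoelderequivalencelimit} and one half of \eqref{eq:Hoelderequivalence}) and the remaining lower bound in \eqref{eq:Hoelderequivalence}. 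It is enough to treat $\nu\in(0,1)$; the range $\nu\in[1,s)$ — nonempty only for $s>1$ — is reduced to it at the end by passing to $\nabla f$, whose $P^s_t$-extension is $\nabla_x F^s$.

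\emph{Upper bounds.}
Using $\int\partial_t p^s_t=\int\nabla_x p^s_t=0$, for $D\in\{\partial_t,\nabla_x\}$ I write
\[
 D F^s(x,t)=\int_{\R^n}D p^s_t(x-z)\,\brac{f(z)-f(x)}\,dz .
\]
From the explicit kernel one reads off the scaling bounds $|\partial_t p^s_t(y)|\aleq t^{s-1}(|y|+t)^{-(n+s)}$ and $|\nabla_x p^s_t(y)|\aleq t^{s}(|y|+t)^{-(n+s+1)}$. Inserting $|f(z)-f(x)|\le[f]_{C^\nu}|z-x|^\nu$ and substituting $z-x=tw$ turns the $z$-integral into $t^{\nu-1}$ times $\int_{\R^n}(1+|w|)^{-(n+s)}|w|^\nu\,dw$, resp. $\int_{\R^n}(1+|w|)^{-(n+s+1)}|w|^\nu\,dw$, which converge iff $\nu<s$, resp. $\nu<s+1$. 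Hence $t^{1-\nu}|\partial_t F^s(x,t)|\aleq[f]_{C^\nu}$ when $\nu<s$, and $t^{1-\nu}|\nabla_x F^s(x,t)|\aleq[f]_{C^\nu}$ for all $\nu\le1$ since $s>0$; this is the ``$\aleq$'' in \eqref{eq:Hoelderequivalence} together with all of \eqref{eq:Hoelderequivalencelimit}.

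\emph{Lower bound in \eqref{eq:Hoelderequivalence}.}
Set $A:=\sup_{(x,t)\in\R^{n+1}_+}t^{1-\nu}|\partial_t F^s(x,t)|$; I must show $[f]_{C^\nu}\aleq A$. Since $F^s(x,t)\to f(x)$ as $t\to0$ and $\nu-1>-1$,
\[
 |f(x)-F^s(x,\delta)|=\Bigl|\int_0^\delta\partial_t F^s(x,t)\,dt\Bigr|\le A\int_0^\delta t^{\nu-1}\,dt\aeq A\,\delta^\nu .
\]
The key step is to upgrade this $t$-decay to a tangential gradient bound $|\nabla_x F^s(x,t)|\aleq A\,t^{\nu-1}$. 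When $s=1$, $W:=\partial_t F^1$ is harmonic, and the classical interior gradient estimate on $B_{t/2}(x,t)\subset\R^{n+1}_+$ gives $|\nabla_{\R^{n+1}}W(x,t)|\aleq t^{-1}\sup_{B_{t/2}(x,t)}|W|\aleq t^{-1}\cdot A\,t^{\nu-1}=A\,t^{\nu-2}$. For general $s\in(0,2)$, differentiating the equation in \eqref{eq:spoissonpde} in $t$ shows that $W=\partial_t F^s$ solves $\dv_{\R^{n+1}}(t^{1-s}\nabla W)=(1-s)\,t^{-s-1}W$; rescaling $B_{t/2}(x,t)$ to the unit ball turns this into a uniformly elliptic equation with smooth coefficients independent of $(x,t)$, so the same interior estimate applies and again $|\nabla_x\partial_t F^s(x,t)|\aleq A\,t^{\nu-2}$. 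Because $\nabla_x F^s(x,\sigma)\to0$ as $\sigma\to\infty$ by the decay estimate \eqref{eq:tinfty} and $\nu-2<-1$,
\[
 |\nabla_x F^s(x,t)|=\Bigl|\int_t^\infty\nabla_x\partial_\sigma F^s(x,\sigma)\,d\sigma\Bigr|\aleq A\int_t^\infty\sigma^{\nu-2}\,d\sigma\aeq A\,t^{\nu-1}.
\]
Finally, for $x\ne y$ with $\delta:=|x-y|$, inserting the slice at height $\delta$ and controlling the horizontal segment between $(x,\delta)$ and $(y,\delta)$,
\[
 |f(x)-f(y)|\le|f(x)-F^s(x,\delta)|+\delta\sup_{|\xi-x|\le\delta}|\nabla_x F^s(\xi,\delta)|+|F^s(y,\delta)-f(y)|\aleq A\,\delta^\nu+\delta\cdot A\,\delta^{\nu-1}+A\,\delta^\nu\aeq A\,\delta^\nu ,
\]
which is exactly $[f]_{C^\nu}\aleq A$. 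The range $\nu\in[1,s)$ follows by applying the case just proved to $g:=\nabla f$: its $P^s_t$-extension is $\nabla_x F^s$, and combining the equivalence at order $\nu-1\in(0,s-1)$ with the bound $\sup_{(x,t)}t^{2-\nu}|\partial_t\nabla_x F^s(x,t)|\aleq A$ obtained above yields $[f]_{C^\nu}=[\nabla f]_{C^{\nu-1}}\aleq A$.

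\emph{Main obstacle.}
The kernel estimates in the upper bounds are routine bookkeeping. The heart of the matter is the lower bound, and within it the passage from the hypothesized $t$-decay of $\partial_t F^s$ to a pointwise bound on $\nabla_x F^s$: for the harmonic extension this is the classical interior estimate applied to the harmonic function $\partial_t F^1$, but for $s\ne1$ one must first identify the degenerate-elliptic (yet, after rescaling, uniformly elliptic) equation satisfied by $\partial_t F^s$, invoke the corresponding interior regularity, and then run the telescoping identity from $t=\infty$ using the decay \eqref{eq:tinfty}. That combination is where essentially all the work lies.
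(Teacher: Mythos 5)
Your proof takes a genuinely different and more self-contained route than the paper's. For \eqref{eq:Hoelderequivalence} the paper cites Stein \cite[V, \S4.2, Prop.~7]{Stein-Singular-Integrals} for $P^1_t$ (with the inhomogeneous norm $\|f\|_\infty+[f]_{C^\nu}$) and then invokes a rescaling $f_k(x)=k^{-\nu}f(kx)$, $k\to\infty$, to strip the $\|f\|_\infty$ term, remarking that the $s\neq 1$ case ``easily extends''. You instead prove both directions from scratch: the upper bound by the kernel-cancellation mechanism (which is exactly what the paper does for \eqref{eq:Hoelderequivalencelimit}, so that half coincides), and — this is the genuinely new content — the lower bound by observing that $W=\partial_t F^s$ satisfies the degenerate-elliptic equation $\dv_{\R^{n+1}}(t^{1-s}\nabla W)=(1-s)t^{-s-1}W$, which after rescaling $B_{t/2}(x,t)$ to unit scale becomes uniformly elliptic with smooth coefficients independent of $(x,t)$, so interior gradient estimates give $|\nabla_x\partial_t F^s|\aleq A\,t^{\nu-2}$, and then a telescoping argument from $t=0$ and from $t=\infty$ closes the estimate. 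This replaces the black-box citation of Stein with a transparent PDE argument, and in particular it actually supplies the missing justification for $s\neq1$ that the paper waves off. Your upper bound also streamlines the paper's kernel manipulations (the paper pulls out $|z|^\nu$ explicitly and defines $\kappa(z)=|z|^\nu\nabla_z(|z|^2+1)^{-(n+s)/2}$; you simply use the scaling bounds $|\partial_t p^s_t|\aleq t^{s-1}(|y|+t)^{-n-s}$, $|\nabla_x p^s_t|\aleq t^s(|y|+t)^{-n-s-1}$; the two computations are the same up to cosmetics).

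There is one small gap. Your upper-bound step uses $|f(z)-f(x)|\le[f]_{C^\nu}|z-x|^\nu$, which is only valid for $\nu\le1$; so as written you prove $t^{1-\nu}|\partial_t F^s|\aleq[f]_{C^\nu}$ only on $\nu\in(0,1]$. You announce that $\nu\in[1,s)$ (nonvacuous only when $s>1$) ``is reduced to it at the end by passing to $\nabla f$'', but the concluding paragraph carries out that reduction \emph{only for the lower bound} $[f]_{C^\nu}\aleq A$; the ``$\aleq$'' direction of \eqref{eq:Hoelderequivalence} for $\nu\in(1,s)$ is never established. The fix is standard and fits your template: since $\partial_t p^s_t$ is radial, $\int\partial_t p^s_t(y)\,y\,dy=0$ as well, so one may write $\partial_t F^s(x,t)=\int\partial_t p^s_t(x-z)\bigl(f(z)-f(x)-\nabla f(x)\cdot(z-x)\bigr)\,dz$ and use $|f(z)-f(x)-\nabla f(x)\cdot(z-x)|\le[\nabla f]_{C^{\nu-1}}|z-x|^\nu=[f]_{C^\nu}|z-x|^\nu$, after which the same scaling computation gives the factor $t^{\nu-1}$ with a convergent integral precisely when $\nu<s$. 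With that one line added, the proof is complete.
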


For sake of completeness, let us also mention that one can characterize the full range of Besov- and Triebel-Lizorkin spaces in terms of Poisson-type potentials $P^s_t \laps{\beta} f$ and $P^s_t \nabla f$. This follows from a general characterization of those spaces via convolution operators, recently given by Bui and Candy in \cite{BuiCandy-2015}. Again for $s \neq 1$, the maximal differential order of the spaces which can be characterized is different depending on whether we use $\partial_t F^s(x,t)$ or $\nabla_x F^s(x,t)$.

\begin{theorem}\label{th:BTcharacterization}
For any $\beta > \alpha$, $\beta > 0$, $0 < p,q <\infty$, $s > 0$,
\[
 \|f\|_{\dot{B}^{\alpha}_{p,q}} \aeq \brac{\int_0^\infty \brac{\int_{\R^n} |t^{-\frac{1}{q}-\alpha+\beta}  P^s_t \laps{\beta} f(x)|^p\ dx }^{\frac{q}{p}} dt}^{\frac{1}{q}}\\
\]
\[
 \|f\|_{\dot{F}^{\alpha}_{p,q}} \aeq \brac{\int_{\R^n}\brac{\int_0^\infty  |t^{-\frac{1}{q}-\alpha+\beta}  P^s_t \laps{\beta} f(x)|^q\ dt }^{\frac{p}{q}} dx}^{\frac{1}{p}}.
\]
If $\alpha < 1$,
\[
 \|f\|_{\dot{B}^{\alpha}_{p,q}} \aeq \brac{\int_0^\infty \brac{\int_{\R^n} |t^{-\frac{1}{q}-\alpha+1}  \nabla_x P^s_t f(x)|^p\ dx }^{\frac{q}{p}} dt}^{\frac{1}{q}}\\
\]
\[
 \|f\|_{\dot{F}^{\alpha}_{p,q}} \aeq \brac{\int_{\R^n}\brac{\int_0^\infty  |t^{-\frac{1}{q}-\alpha+1}  \nabla_x P^s_t f(x)|^q\ dt }^{\frac{p}{q}} dx}^{\frac{1}{p}}.
\]
Regarding derivatives in $t$, we have the following for $\alpha < s$,
\[
 \|f\|_{\dot{B}^{\alpha}_{p,q}} \aeq \brac{\int_0^\infty \brac{\int_{\R^n} |t^{-\frac{1}{q}-\alpha+1}  \partial_t P^s_t f(x)|^p\ dx }^{\frac{q}{p}} dt}^{\frac{1}{q}},
\]
\[
 \|f\|_{\dot{F}^{\alpha}_{p,q}} \aeq \brac{\int_{\R^n}\brac{\int_0^\infty  |t^{-\frac{1}{q}-\alpha+1}  \partial_t P^s_t f(x)|^q\ dt }^{\frac{p}{q}} dx}^{\frac{1}{p}}.
\]
Regarding two derivatives, for $\alpha < s+1$,
\[
 \|f\|_{\dot{B}^{\alpha}_{p,q}} \aeq \brac{\int_0^\infty \brac{\int_{\R^n} |t^{-\frac{1}{q}-\alpha+2}  \partial_t \nabla_{x} P^s_t f(x)|^p\ dx }^{\frac{q}{p}} dt}^{\frac{1}{q}},
\]
\[
 \|f\|_{\dot{F}^{\alpha}_{p,q}} \aeq \brac{\int_{\R^n}\brac{\int_0^\infty  |t^{-\frac{1}{q}-\alpha+2}  \partial_t \nabla_x P^s_t f(x)|^q\ dt }^{\frac{p}{q}} dx}^{\frac{1}{p}}.
\]
\end{theorem}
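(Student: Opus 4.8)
The plan is to deduce all of the stated equivalences from a single general convolution characterization of the homogeneous Besov and Triebel--Lizorkin scales, namely the one obtained by Bui and Candy \cite{BuiCandy-2015}. That result says that if $\psi$ is a complex-valued function that is sufficiently smooth, decays fast enough together with its derivatives, has a prescribed number of vanishing moments, and satisfies a non-degeneracy (Tauberian) condition --- roughly, $|\hat\psi(\xi)| \ageq 1$ on an annulus once the factor forcing the vanishing of $\hat\psi$ at the origin has been divided out --- then, writing $\psi_t(x) = t^{-n}\psi(x/t)$,
\[
 \|f\|_{\dot{B}^{\alpha}_{p,q}} \aeq \brac{\int_0^\infty \brac{\int_{\R^n} |t^{-\alpha}\, \psi_t \ast f(x)|^p\, dx}^{\frac{q}{p}} \frac{dt}{t}}^{\frac{1}{q}},
\]
with the roles of the inner $L^p$- and outer $L^q$-integrations interchanged for $\dot{F}^{\alpha}_{p,q}$, and this holds for every real $\alpha$ strictly smaller than the order of vanishing of $\hat\psi$ at the origin. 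Rewriting $\tfrac{dt}{t} = t^{-1}\, dt$ produces the $-\tfrac1q$ appearing in the weights of the statement.

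First I would record the Fourier-side description of the Poisson--Bessel operator: $\widehat{P^s_t f}(\xi) = \theta_s(t|\xi|)\, \hat f(\xi)$, where $\theta_s$ is smooth and strictly positive on $(0,\infty)$, strictly decreasing, normalised by $\theta_s(0) = 1$, and admits the expansion $\theta_s(r) = 1 - c_s r^s + O(r^{\min(2,2s)})$ as $r \to 0^+$ with $c_s \neq 0$; for $s = 1$ this is simply $\theta_1(r) = e^{-r}$. One reads off this profile either from the explicit modified-Bessel-function form of the Caffarelli--Silvestre kernel or directly from the equation $\dv(t^{1-s}\nabla F^s) = 0$, and the fact that the first correction is $r^s$ rather than $r^2$ is precisely what makes $t^{1-s}\partial_t$ behave ``as an operator of order $s$''. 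In physical space $P^s_t$ and its $x$- and $t$-derivatives decay like $|x|^{-(n+s)}$ or better, which supplies the decay input needed for \cite{BuiCandy-2015}.

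Next I would compute, for each of the four operators, the kernel it corresponds to and its order of vanishing at the origin. The multiplier of $P^s_t \laps{\beta}$ is $|\xi|^\beta \theta_s(t|\xi|) = t^{-\beta}\,(t|\xi|)^\beta \theta_s(t|\xi|)$, so $P^s_t\laps{\beta} f = t^{-\beta}(\psi_t \ast f)$ with $\hat\psi(\xi) = |\xi|^\beta \theta_s(|\xi|)$, vanishing to order exactly $\beta$; likewise $\nabla_x P^s_t f = t^{-1}(\eta_t \ast f)$ with $\hat\eta(\xi) = i\xi\, \theta_s(|\xi|)$ of order $1$, $\partial_t P^s_t f = t^{-1}(\rho_t \ast f)$ with $\hat\rho(\xi) = |\xi|\,\theta_s'(|\xi|) = -c_s s|\xi|^s + \ldots$ of order $s$, and $\partial_t \nabla_x P^s_t f = t^{-2}(\kappa_t \ast f)$ with $\hat\kappa(\xi) = i\xi|\xi|\, \theta_s'(|\xi|)$ of order $1+s$. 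In every case, dividing out the vanishing factor leaves a profile proportional to $\theta_s$ or $\theta_s'$, which is nonvanishing on $(0,\infty)$ by the positivity and strict monotonicity of $\theta_s$, so the Tauberian condition holds; combining this with the decay and smoothness of the kernels puts us in the scope of \cite{BuiCandy-2015} on exactly the ranges $\beta > \alpha$, $\alpha < 1$, $\alpha < s$, $\alpha < s+1$, respectively. Finally, absorbing the prefactors $t^{-\beta}$, $t^{-1}$, $t^{-1}$, $t^{-2}$ into the weight turns $t^{-\alpha}$ into the stated $t^{-\frac1q - \alpha + \beta}$, $t^{-\frac1q - \alpha + 1}$, $t^{-\frac1q - \alpha + 1}$, $t^{-\frac1q - \alpha + 2}$.

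The step I expect to be most delicate is the bookkeeping of the quantitative hypotheses of the Bui--Candy theorem for the Poisson--Bessel kernel and its $x$- and $t$-derivatives over the whole range $0 < p, q < \infty$ --- in particular for small $p$, where one needs more decay and more cancellation than the naive count gives, and where the merely polynomial (rather than exponential, as for $s = 1$) decay of $P^s_t$ in physical space must be checked to suffice. Secondarily, one must carefully identify the coefficient in the small-$r$ expansion $\theta_s(r) = 1 - c_s r^s + \ldots$ and verify $c_s \neq 0$, since this nonvanishing constant is exactly what forces the restriction $\alpha < s$ in place of $\alpha < 1$ in the two $\partial_t$-characterisations, and $\alpha < s+1$ in the mixed one.
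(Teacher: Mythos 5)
Your proposal follows exactly the route the paper takes: reduce to the Bui--Candy convolution characterization of $\dot B^\alpha_{p,q}$ and $\dot F^\alpha_{p,q}$, then verify the cancellation/vanishing-order and Tauberian hypotheses for each of the four kernels using the Fourier representation of the Poisson--Bessel kernel (the paper's Section~\ref{s:fouriertransform}). Your write-up is in fact more explicit than the paper's (which just cites Bui--Candy and points to the Fourier estimates), but it is the same argument with the same bookkeeping of vanishing orders $\beta$, $1$, $s$, $1+s$ and the resulting restrictions $\alpha<\beta$, $\alpha<1$, $\alpha<s$, $\alpha<s+1$.
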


\section{Useful blackbox estimates from \texorpdfstring{$\R^{n+1}_+$}{R(n+1)} to \texorpdfstring{$\R^n$}{R(n)}}\label{s:bbestimates}
As a consequence of Section~\ref{s:poissonchar} we obtain the following estimates. The proofs can be found in Appendix~\ref{s:proofbbestimates}. Recall that $\lapms{\sigma}$ denotes the Riesz potential, the inverse of $\laps{\sigma}$.

First we consider estimates with $L^p$-spaces.
\begin{proposition}[$L^p$-estimates]\label{pr:Lpqest}
Let $s \in (0,1]$. Take any $p_i \in (1,\infty)$ and $q_i \in [1,\infty]$, for $i \in \{1,2,3\}$, and such that 
\[
\sum_{i =1}^3 \frac{1}{p_i}=\sum_{i =1}^3 \frac{1}{q_i} = 1.
\]
Denote with $F^s(x,t) = P_t^s f(x)$, $G^s(x,t) = P_t^s g(x)$, $H^s(x,t) := P_t^s h(x)$ for  $f,g,h \in C_c^\infty(\R^n)$. .
For $s_1, s_2 \in [0,s)$ and $s_3 \geq 0$,
\[
\begin{split}
\int_{\R^{n+1}_+} t^{1-s_1-s_2+s_3}\ |\nabla_{\R^{n+1}} F|\, |\nabla_{\R^{n+1}} G|\, |H|
\aleq  \|\laps{s_1} f\|_{L^{(p_1,q_1)}}\ \|\laps{s_2} g\|_{L^{(p_2,q_2)}}\ \|\lapms{s_3} h\|_{L^{(p_3,q_3)}},
\end{split}
\]
and for $s_1 \in [0,s)$ and $s_2, s_3 \geq 0$,
\[
\begin{split}
\int_{\R^{n+1}_+} t^{1-s_1+s_2+s_3}\ |\nabla_{\R^{n+1}} F|\, |\nabla_{\R^{n+1}} G|\, |H|
\aleq  \|\laps{s_1} f\|_{L^{(p_1,q_1)}}\ \|\lapms{s_2} g\|_{L^{(p_2,q_2)}}\ \|\lapms{s_3} h\|_{L^{(p_3,q_3)}}.
\end{split}
\]
If $\nabla_{\R^{n+1}} F$ is replaced with $\nabla_x F$ then $s_1 \in [0,1)$ is allowed in the above two estimates.

For $s_1 \in (0,1+s)$, and $s_2, s_3 \geq 0$.
\[
\begin{split}
\int_{\R^{n+1}_+} t^{2-s_1-s_2+s_3}\ |\nabla_x \nabla_{\R^{n+1}} F|\, |\nabla_{\R^{n+1}} G|\, |H|
\aleq  \|\laps{s_1} f\|_{L^{(p_1,q_1)}}\ \|\laps{s_2} g\|_{L^{(p_2,q_2)}}\ \|\lapms{s_3} h\|_{L^{(p_3,q_3)}},
\end{split}
\]
If $s_2 < 0$ the last estimate still holds with $\laps{s_2} g$ replaced by $\lapms{|s_2|} g$.

In all terms above we may replace $|H|$ by $t|\nabla_{\R^{n+1}} H|$.

All estimates also hold if $(p_3,q_3) = (\infty,\infty)$.
\end{proposition}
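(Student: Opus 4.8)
The plan is to reduce each of the six inequalities to one‑dimensional integrals in the $t$‑variable and then close with the trace identities of Section~\ref{s:poissonchar}. The common scheme has five steps: (i) distribute the prefactor $t^{\cdots}$ among the three factors so that each carries the power dictated by the order of the norm it should produce; (ii) use Fubini in $x$ to write the left‑hand side as $\int_{\R^n}\brac{\int_0^\infty(\cdots)\,dt}\,dx$; (iii) inside the $x$‑integral, pull a $\sup_{t>0}$ out of the factor $|H|$ and apply Cauchy--Schwarz in $t$ to the two remaining factors, producing two \emph{vertical} square functions $S_1(x),S_2(x)$ and a maximal‑type quantity $N_3(x)$; (iv) apply the generalized H\"older inequality for Lorentz spaces, which is legitimate since $\sum 1/p_i=\sum 1/q_i=1$ (the endpoint $(p_3,q_3)=(\infty,\infty)$ being the case where only two factors are paired and the third is bounded in $L^\infty$); (v) identify $\|S_i\|_{L^{(p_i,q_i)}}$ and $\|N_3\|_{L^{(p_3,q_3)}}$ via Propositions~\ref{pr:pc:maximal} and~\ref{pr:Sobolevspaces}, using also the $L^{(p,q)}$‑boundedness of the Hardy--Littlewood maximal function for $p\in(1,\infty)$.

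For the first estimate I would split $1-s_1-s_2+s_3=\brac{\tfrac12-s_1}+\brac{\tfrac12-s_2}+s_3$, so that $S_i(x)=\brac{\int_0^\infty t^{1-2s_i}|\nabla_{\R^{n+1}}(\cdot)(x,t)|^2\,dt}^{1/2}$ and $N_3(x)=\sup_{t>0}t^{s_3}|P_t^s h(x)|$. Splitting $\nabla_{\R^{n+1}}=(\nabla_x,\partial_t)$, the identities \eqref{eq:Hspequivalencenablax} (needing $\nu<1$) and \eqref{eq:Hspequivalencedt} (needing $\nu<s$) give $\|S_i\|_{L^{(p_i,q_i)}}\aeq\|\laps{s_i}(\cdot)\|_{L^{(p_i,q_i)}}$ since $s_1,s_2\in[0,s)$, while \eqref{eq:maximalpotential} (or \eqref{eq:possionmaximal} if $s_3=0$, or \eqref{eq:tinftyLinfty} in the endpoint case) yields $\|N_3\|_{L^{(p_3,q_3)}}\aleq\|\lapms{s_3}h\|_{L^{(p_3,q_3)}}$. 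For the second estimate the only change is to split off $\brac{\tfrac12+s_2}$ for the $G$‑factor: then $S_2$ carries the weight $t^{1+2s_2}$, i.e.\ corresponds to $\nu=-s_2\le0$ in \eqref{eq:Hspequivalencenablax}--\eqref{eq:Hspequivalencedt}, whose Riesz‑potential version produces $\|\lapms{s_2}g\|$ with no upper bound needed on $s_2$; if $\nabla_{\R^{n+1}}F$ is replaced by $\nabla_x F$, then \eqref{eq:Hspequivalencedt} is not used for $S_1$ and $s_1\in[0,1)$ suffices.

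For the second‑order estimate I would split $2-s_1-s_2+s_3=\brac{\tfrac32-s_1}+\brac{\tfrac12-s_2}+s_3$, so that $S_1(x)=\brac{\int_0^\infty t^{3-2s_1}|\nabla_x\nabla_{\R^{n+1}}F(x,t)|^2\,dt}^{1/2}$; writing $\nabla_x\nabla_{\R^{n+1}}F^s=\brac{\nabla_x^2F^s,\ \partial_t\nabla_xF^s}$, using that $P_t^s$ commutes with $\nabla_x$ and that a $t$‑derivative ``counts as order $s$'', the first piece is handled by \eqref{eq:Hspequivalencenablaxx} and the second by \eqref{eq:Hspequivalencedt} applied to $\nabla_x f$, which together cover $s_1\in(0,1+s)$; the case $s_2<0$ again uses the Riesz‑potential version. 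To replace $|H|$ everywhere by $t|\nabla_{\R^{n+1}}H|$, I would keep it as the $\sup_t$‑factor, now $N_3(x)=\sup_{t>0}t^{1+s_3}|\nabla_{\R^{n+1}}P_t^s h(x)|$, bounded by $\mathcal{M}(\lapms{s_3}h)(x)$ --- this, like the other pointwise bounds above, is an instance of (or is proved by the same kernel estimate as) Proposition~\ref{pr:pc:maximal}. As an equivalent packaging one may route the Fubini step through cones $\{(y,t):|y-x|<t\}$ and use the \emph{nontangential} square‑function identities of Proposition~\ref{pr:Lorentzspaces} in place of the vertical ones; for the second‑order term this is actually cleaner, since \eqref{eq:squarefctnablachar} matches $S_1$ directly.

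I do not expect any individual step to be difficult; the main obstacle is the bookkeeping of weights and admissible exponents. One has to keep straight that a gradient factor carrying a weight $t^{\frac12-\nu}$ with $\nu>0$ produces $\laps{\nu}$ and forces $\nu<s$ when it is a $t$‑derivative, $\nu<1$ when it is an $x$‑derivative, and $\nu<1+s$ for the mixed second derivative, whereas $\nu\le0$ produces the Riesz potential $\lapms{|\nu|}$ with no upper constraint --- and, correspondingly, one must check that the trace theorems of Section~\ref{s:poissonchar} are available up to exactly these orders (this is where the reduction to the extension of $\nabla_x f$, and the ``$t$‑derivative is of order $s$'' principle, come in). The remaining points, the endpoint $(p_3,q_3)=(\infty,\infty)$ and the variant with $t|\nabla_{\R^{n+1}}H|$ in place of $|H|$, are then only minor modifications of the maximal‑function step.
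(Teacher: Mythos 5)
Your proposal is correct and coincides in essence with the paper's own (very terse) argument: the paper points out $\sup_{t>0}t^{s_3}|H(x,t)|\aleq\mathcal M(\lapms{s_3}h)(x)$ from \eqref{eq:maximalpotential}, then invokes ``H\"older's inequality and \eqref{eq:Hspequivalencenablax}, \eqref{eq:Hspequivalencenablaxx}, \eqref{eq:Hspequivalencedt}'' without spelling out the Cauchy--Schwarz in $t$ producing the two vertical square functions nor the splitting of $\nabla_{\R^{n+1}}$ and $\nabla_x\nabla_{\R^{n+1}}$ into tangential and $t$-derivative pieces. Your five-step scheme supplies exactly those missing details, and your bookkeeping of admissible $\nu$-ranges ($\nu<s$ for $\partial_t$, $\nu<1$ for $\nabla_x$, $\nu<1+s$ for $\nabla_x\nabla_{\R^{n+1}}$, Riesz-potential version for $\nu\le 0$) is the right reading of Proposition~\ref{pr:Sobolevspaces} and matches the hypotheses $s_1,s_2\in[0,s)$, $s_1\in[0,1)$, and $s_1\in(0,1+s)$ in the respective parts of the statement.
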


Next, we list estimates involving the BMO-norm.
\begin{proposition}[BMO-estimates]\label{pr:BMOusualestimate}
Let $\ell \geq 1$, $s \in (0,1]$. We have the following estimates for $F^s_i(x,t) = P_t^s f_i(x)$, $G^s(x,t) = P_t^s g(x)$, and $\Phi^s(x,t) = P_t^s \varphi(x)$ for $f_i,g,\varphi \in C_c^\infty(\R^n)$. 

Assume that $p_i \in (1, \infty)$, $q_i \in [1,\infty]$, for $i \in \{0,\ldots,\ell\}$ such that \[\sum_{i=0}^\ell \frac{1}{p_i} = \sum_{i=0}^\ell \frac{1}{q_i} = 1.\] Then
\[
\int_{\R^{n+1}_+} t^{1+(1-s)(\ell+1)} |\nabla_{\R^{n+1}} \Phi^s(x,t)|\, |\nabla_{x} \nabla_{\R^{n+1}} G^s(x,t)|\, |\nabla_{\R^{n+1}} F_1^s(x,t)|\ldots |\nabla_{\R^{n+1}} F_\ell^s(x,t)|\, d(x,t)
\]
\[
\aleq  [\varphi]_{BMO}\, \|\laps{s} g\|_{L^{(p_0,q_0)}}\,  \|\laps{s} f_1\|_{L^{(p_1,q_1)}}\, \ldots\, \|\laps{s} f_\ell\|_{L^{(p_\ell,q_\ell)}}.
\]
Also, for $\nu \in [0,s)$
\[
\int_{\R^{n+1}_+} t^{1-\nu} |\nabla_{\R^{n+1}} \Phi^s(x,t)|\, |\partial_t G^s(x,t)|\, |F_1^s(x,t)|\ldots |F_\ell^s(x,t)|\, d(x,t)
\]
\[
\aleq [\varphi]_{BMO}\, \|\laps{\nu} g\|_{L^{(p_0,q_0)}}\,  \| f_1\|_{L^{(p_1,q_1)}}\, \ldots\, \|f_\ell\|_{L^{(p_\ell,q_\ell)}}.
\]
The last estimate also holds
\begin{itemize}
\item if we replace $|\partial_t G^s(x,t)|$ with $|\nabla_x G^s(x,t)|$ for any $\nu \in [0,1)$.
\item if we replace $|\partial_t G^s(x,t)|$ with $t|\nabla_x \nabla_{\R^{n+1}}G^s(x,t)|$ for any $\nu \in [0,1+s)$.
\item if we replace $|F_1^s(x,t)|$ with $t|\nabla_{\R^{n+1}} F_1^s(x,t)|$ for any $\nu \in [0,s]$.
\end{itemize}
All the above estimate also hold if we replace $|\nabla_{\R^{n+1}} \Phi^s(x,t)|$ with $|t^{s-1}P^s_t (\laps{s} \varphi)|$ or $|t^{s}\nabla_{\R^{n+1}} P^s_t (\laps{s} \varphi)|$.
\end{proposition}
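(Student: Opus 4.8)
The plan is to obtain every estimate in the Proposition from two ingredients: the Carleson-measure characterization of BMO (Proposition~\ref{pr:BMOcharacterization}), packaged as a duality between the tent spaces $T^1$ and $T^\infty$, and the square-function / nontangential-maximal-function characterizations of Lebesgue and Lorentz spaces in Propositions~\ref{pr:pc:maximal} and~\ref{pr:Lorentzspaces}. Write $\mathcal{A}(K)(x):=\brac{\int_{|y-x|<t}|K(y,t)|^2\,t^{-n-1}\,dy\,dt}^{1/2}$ for the area integral over the cone at $x$, and $\mathcal{C}(K):=\sup_{B}\brac{|B|^{-1}\int_{T(B)}|K(y,t)|^2\,t^{-1}\,dy\,dt}^{1/2}$ for the Carleson functional. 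The standard tent-space duality then gives
\[
 \int_{\R^{n+1}_+}|K_1(y,t)|\,|K_2(y,t)|\ \frac{dy\,dt}{t}\ \aleq\ \|\mathcal{A}(K_1)\|_{L^1(\R^n)}\ \mathcal{C}(K_2).
\]
By Proposition~\ref{pr:BMOcharacterization} the choice $K_\Phi(y,t):=t\,\nabla_{\R^{n+1}}\Phi^s(y,t)$ satisfies $\mathcal{C}(K_\Phi)\aeq[\varphi]_{BMO}$, and the characterizations of Section~\ref{s:poissonchar} give the same Carleson bound for $t^{s}P^s_t(\laps{s}\varphi)$ and $t^{s+1}\nabla_{\R^{n+1}}P^s_t(\laps{s}\varphi)$, so the last sentence of the Proposition costs nothing and I shall only treat $K_\Phi$.

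For the first estimate I would rewrite the left-hand side as $\int_{\R^{n+1}_+}|K_\Phi|\,|B|\,\frac{dy\,dt}{t}$ with
\[
 B(y,t):=t^{1+(1-s)(\ell+1)}\,|\nabla_x\nabla_{\R^{n+1}}G^s(y,t)|\ \prod_{i=1}^{\ell}|\nabla_{\R^{n+1}}F_i^s(y,t)|,
\]
so that by the pairing bound it only remains to estimate $\|\mathcal{A}(B)\|_{L^1}$. On the cone $|y-x|<t$ the pointwise bounds \eqref{eq:possionmaximalpt}--\eqref{eq:possionmaximalptx} give $t^{1-s}|\nabla_{\R^{n+1}}F_i^s(y,t)|\aleq N_i(x):=\mathcal{M}(\laps{s}f_i)(x)+\mathcal{M}(\nabla^s f_i)(x)$; pulling these $\ell$ factors out of $\mathcal{A}$ and collecting the remaining power of $t$ leaves $\int_{|y-x|<t}t^{3-2s-n}|\nabla_x\nabla_{\R^{n+1}}G^s(y,t)|^2\,dy\,dt$, whose square root, as a function of $x$, has $L^{(p_0,q_0)}$-norm $\aeq\|\laps{s}g\|_{L^{(p_0,q_0)}}$ by \eqref{eq:squarefctnablachar} (with $\nu=s\in(0,1+s)$). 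H\"older's inequality in Lorentz spaces (legitimate since $\sum 1/p_i=\sum 1/q_i=1$), together with the $L^{(p_i,q_i)}$-boundedness of the Hardy--Littlewood maximal function and of the Riesz transforms for $p_i\in(1,\infty)$, then yields $\|\mathcal{A}(B)\|_{L^1}\aleq\|\laps{s}g\|_{L^{(p_0,q_0)}}\prod_i\|\laps{s}f_i\|_{L^{(p_i,q_i)}}$, and the pairing bound concludes. For $s=1$ the $t$-exponents collapse and $\nabla^s,\laps s$ reduce to $\nabla$ modulo Riesz transforms, so this is exactly the $s=1$ case used to pass from \eqref{eq:detest} to the BMO-bound in the Jacobian estimate.

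The second estimate, and each of its three variants, I would treat by exactly the same scheme: keep $K_\Phi=t\nabla_{\R^{n+1}}\Phi^s$ as the $T^\infty$-factor; bound $|F_i^s(y,t)|\aleq\mathcal{M}f_i(x)$ on the cone by \eqref{eq:possionmaximal} (and, for the third variant, $t|\nabla_{\R^{n+1}}F_1^s(y,t)|\aleq\mathcal{M}f_1(x)$, the maximal-function estimate for the approximate-identity kernel $t\nabla_{\R^{n+1}}P^s_t$ of the same type as \eqref{eq:possionmaximal}); pull these maximal functions out of $\mathcal{A}$; and observe that the residual area integral in $g$ is $\int_{|y-x|<t}t^{1-2\nu-n}|\partial_t G^s|^2\,dy\,dt$, or its $\nabla_x$-analogue, or $\int_{|y-x|<t}t^{3-2\nu-n}|\nabla_x\nabla_{\R^{n+1}}G^s|^2\,dy\,dt$, whose square root has $L^{(p_0,q_0)}$-norm $\aeq\|\laps{\nu}g\|_{L^{(p_0,q_0)}}$ by \eqref{eq:squarefctchar} (valid for $\nu\in[0,s)$), by its $\nabla_x$-version (valid for $\nu\in[0,1)$), or by \eqref{eq:squarefctnablachar} (valid for $\nu\in(0,1+s)$), respectively --- which is precisely why the three bullets carry the stated ranges of $\nu$. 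I expect the only real point of care to be the bookkeeping with $t$-weights: each integrand must be split so that, once the maximal-function factors are removed from the area integral, the leftover power of $t$ lands exactly on the exponent for which the invoked square-function identity of Proposition~\ref{pr:Lorentzspaces} holds; beyond that, the argument uses nothing but the black boxes of Section~\ref{s:poissonchar}, the tent-space duality, and standard Lorentz-space H\"older and boundedness facts.
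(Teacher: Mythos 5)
Your proposal is correct and follows essentially the same route as the paper: the paper's own proof also pairs the $\Phi$-factor (as a Carleson measure, via Corollary~\ref{co:FGcarlesonestBMO}, which packages Lemma~\ref{la:FGcarlesonest} with Proposition~\ref{pr:BMOcharacterization}) against an area integral, pulls the $F_i$-factors out of the cone by the nontangential maximal bounds \eqref{eq:possionmaximalpt}--\eqref{eq:possionmaximalptx}, and finishes with the square-function identification of $\|\laps{\nu}g\|_{L^{(p_0,q_0)}}$ and Lorentz--H\"older plus the maximal theorem. Your exponent bookkeeping ($t^{2-s}$ left over in the first estimate, leading to $t^{3-2s-n}$ inside the area integral, matched by \eqref{eq:squarefctnablachar} at $\nu=s$) reproduces exactly the paper's $W(x,t)=t^{2-s}\nabla_x\nabla_{\R^{n+1}}G^s$, and you are if anything more careful than the paper in spelling out that the $\nabla^s f_i$ terms also require $L^{(p_i,q_i)}$-boundedness of the Riesz transforms.

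One small point of care. In the third bullet the Proposition allows $\nu\in[0,s]$ including the closed endpoint $\nu=s$, whereas \eqref{eq:squarefctchar} — the estimate you place on $G$ in that case — is only valid for $\nu\in[0,s)$. Your scheme as literally written therefore does not reach $\nu=s$ for that bullet. It does go through after a one-line swap of roles: for $\nu=s$, use the pointwise bound $t^{1-s}|\partial_t G^s(y,t)|\aleq\mathcal{M}(\laps{s}g)(x)$ from \eqref{eq:possionmaximalpt} to pull $G$ out of the cone, and instead place $t\nabla_{\R^{n+1}}F_1^s$ in the area integral, where \eqref{eq:squarefctchar} with $\nu=0$ (or its $\nabla_x$-version) gives $\|\mathcal{A}(t\nabla F_1^s)\|_{L^{(p_1,q_1)}}\aleq\|f_1\|_{L^{(p_1,q_1)}}$. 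The paper itself is terse here (``The other estimates follow the same way''), so this is a gap you share with the source rather than one you introduced, but it is worth flagging since your sentence ``which is precisely why the three bullets carry the stated ranges of $\nu$'' does not quite account for the closed endpoint.
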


Lastly, we state estimates involving the H\"older-norm.
\begin{proposition}[H\"older-space estimates]\label{pr:Hoelderusualestimate}
Let $\ell \geq 1$, $s \in (0,1]$. We have the following estimates for $F^s(x,t) = P_t^s f(x)$, $G^s(x,t) = P_t^s g(x)$, and $\Phi^s(x,t) = P_t^s \varphi(x)$ for $f,g,\varphi \in C_c^\infty(\R^n)$. Assume that $p_1,p_2 \in (1, \infty)$, $q_1,q_2 \in [1,\infty]$ such that \[\frac{1}{p_1} + \frac{1}{p_2}= \frac{1}{q_1} + \frac{1}{q_2} = 1.\] 

For $\nu \in (0,s)$, $s_1 \in (0,s)$, $s_2 > 0$,
\[
\int_{\R^{n+1}_+} t^{1-\nu-s_1+s_2} |\nabla_{\R^{n+1}} \Phi^s(x,t)|\, |\nabla_{\R^{n+1}} G^s(x,t)|\, |F^s(x,t)|\, d(x,t) 
\]
\[
\aleq  [\varphi]_{C^\nu}\, \|\laps{s_1} g\|_{L^{(p_1,q_1)}}\,  \| \lapms{s_2} f\|_{L^{(p_2,q_2)}}\ 
\]
\begin{itemize}
\item For $\nu = 0$ we replace $[\varphi]_{C^\nu}$ with $\|\varphi\|_{L^\infty}$.
\item For $s_2 = 0$ and $\nu < 1$ we replace $[\varphi]_{C^\nu}$ with $[\laps{\nu} \varphi]_{BMO}$
\item For $s_1 = 0$ we replace $\|\laps{s_1} g\|_{L^{(p_1,q_1)}}$ with $\|g\|_{L^{(p_1,q_1)}}$.
\item If we replace $|\nabla_{\R^{n+1}} \Phi^s(x,t)|$ with $|\nabla_{x} \Phi^s(x,t)|$ we can take $\nu \in (0,1]$.
\item If we replace $ |\nabla_{\R^{n+1}} \Phi^s(x,t)|$ with $t |\nabla_{\R^{n+1}} \nabla_x \Phi^s(x,t)|$, we may take $\nu \in (0,1+s)$.
\item If we take $s_1 < 0$, then $\|\laps{s_1} g\|_{L^{(p_1,q_1)}}$ needs to be replaced with $\|\lapms{|s_1|} g\|_{L^{(p_1,q_1)}}$.
\item We may replace $|F^s(x,t)|$ with $t|\nabla_{\R^{n+1}} F^s(x,t)|$. Then, if $s_2 = 0$ the norm for $\varphi$ is $[\varphi]_{C^\nu}$.
\end{itemize}
For $\nu = 1$ and $s_2 = 0$, observe that 
\[
 [\laps{\nu} \varphi]_{BMO} + [D^{\nu} \varphi]_{BMO} \aleq [\varphi]_{\lip}.
\]
\end{proposition}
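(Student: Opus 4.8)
The plan is to reduce every inequality in Proposition~\ref{pr:Hoelderusualestimate} to the black-box characterizations of Section~\ref{s:poissonchar} by one uniform two-step scheme: first remove the $\Phi^s$-factor by a pointwise bound (a Carleson-measure bound in one exceptional case), and then estimate the remaining integral of $|\nabla_{\R^{n+1}}G^s|\,|F^s|$ over $\R^{n+1}_+$ by Cauchy--Schwarz in $t$ followed by H\"older in the Lorentz spaces on $\R^n$.

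For the main estimate ($\nu\in(0,s)$, $s_1\in(0,s)$, $s_2>0$) I would first invoke the H\"older characterization of Proposition~\ref{pr:hoelder}: eq.~\eqref{eq:Hoelderequivalence} (which needs $\nu<s$) controls the $\partial_t$-component of $\nabla_{\R^{n+1}}\Phi^s$ and \eqref{eq:Hoelderequivalencelimit} controls the $\nabla_x$-component, giving $|\nabla_{\R^{n+1}}\Phi^s(x,t)|\aleq[\varphi]_{C^\nu}\,t^{\nu-1}$. This collapses the weight $t^{1-\nu-s_1+s_2}$ to $t^{-s_1+s_2}$, so it remains to bound
\[
 \int_{\R^{n+1}_+} t^{-s_1+s_2}\,|\nabla_{\R^{n+1}}G^s(x,t)|\,|F^s(x,t)|\,d(x,t)
\]
by $\|\laps{s_1}g\|_{L^{(p_1,q_1)}}\,\|\lapms{s_2}f\|_{L^{(p_2,q_2)}}$. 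Writing $t^{-s_1+s_2}=t^{1/2-s_1}\cdot t^{-1/2+s_2}$ and applying Cauchy--Schwarz in $t$ bounds this by
\[
 \int_{\R^n}\brac{\int_0^\infty t^{1-2s_1}|\nabla_{\R^{n+1}}G^s(x,t)|^2\,dt}^{1/2}\,\brac{\int_0^\infty t^{2s_2-1}|F^s(x,t)|^2\,dt}^{1/2}\,dx.
\]
The first inner factor has $L^{(p_1,q_1)}$-norm comparable to $\|\laps{s_1}g\|_{L^{(p_1,q_1)}}$ by the vertical square-function estimates \eqref{eq:Hspequivalencenablax}--\eqref{eq:Hspequivalencedt} — the hypothesis $s_1<s$ being exactly what lets the $\partial_t$-part of $\nabla_{\R^{n+1}}G^s$ be absorbed — while for the second I would integrate by parts once in $t$ (no boundary terms, since $s_2>0$ kills the endpoint $t=0$ and \eqref{eq:tinfty} gives decay at infinity) to get $\int_0^\infty t^{2s_2-1}|F^s|^2\,dt\aleq\int_0^\infty t^{2s_2+1}|\partial_t F^s|^2\,dt$, whose square root has $L^{(p_2,q_2)}$-norm $\aeq\|\lapms{s_2}f\|_{L^{(p_2,q_2)}}$ by the $\nu<0$ instance of \eqref{eq:Hspequivalencedt}. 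H\"older's inequality in $L^{(p_1,q_1)}\times L^{(p_2,q_2)}$ then closes the argument.

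The listed variants only require feeding a different black box into the same scheme: for $\nu=0$ use $|\nabla_{\R^{n+1}}\Phi^s|\aleq t^{-1}\|\varphi\|_{L^\infty}$ from \eqref{eq:tinftyLinfty}; for $|\nabla_x\Phi^s|$ in place of $|\nabla_{\R^{n+1}}\Phi^s|$ use \eqref{eq:Hoelderequivalencelimit}, valid up to $\nu\le1$; for $t|\nabla_{\R^{n+1}}\nabla_x\Phi^s|$ use the second-order H\"older bound $t^{2-\nu}|\partial_t\nabla_x\Phi^s|\aleq[\varphi]_{C^\nu}$ (valid for $\nu<1+s$, of the type underlying Theorem~\ref{th:BTcharacterization}); for $s_1<0$ use the Riesz-potential version of \eqref{eq:Hspequivalencenablax}--\eqref{eq:Hspequivalencedt}; for $s_1=0$ one just takes the $\nu=0$ instance of the square function; and when $|F^s|$ is replaced by $t|\nabla_{\R^{n+1}}F^s|$ the leftover integral is $\int_{\R^{n+1}_+} t^{1-s_1+s_2}|\nabla_{\R^{n+1}}G^s|\,|\nabla_{\R^{n+1}}F^s|$, split as $t^{1/2-s_1}\cdot t^{1/2+s_2}$ and handled by two square functions. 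The trailing inequality $[\laps{\nu}\varphi]_{BMO}+[D^{\nu}\varphi]_{BMO}\aleq[\varphi]_{\lip}$ at $\nu=1$ is immediate from $L^\infty\hookrightarrow BMO$ together with the $L^\infty\to BMO$ boundedness of the Riesz transforms.

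The case I expect to be the real obstacle is $s_2=0$ with the norm $[\laps{\nu}\varphi]_{BMO}$ ($\nu<1$): here $\nabla_{\R^{n+1}}\Phi^s$ admits no pointwise bound, and I would instead run a tent-space/Carleson-measure argument, pairing the Carleson measure $t^{1-\nu}|\nabla_{\R^{n+1}}\Phi^s|^2\,d(x,t)$ — whose Carleson norm is $\aeq[\laps{\nu}\varphi]_{BMO}$, via Proposition~\ref{pr:BMOcharacterization} applied to $\laps{\nu}\varphi$ after identifying $\nabla_{\R^{n+1}}P_t^s\laps{\nu}\varphi$ with a fractional $t$-average of $\nabla_{\R^{n+1}}\Phi^s$ — against the tent-space function built from $|\nabla_{\R^{n+1}}G^s|\,|F^s|$, and then closing with the $L^{(p,q)}$-duality of tent spaces, the square-function bound for $\laps{s_1}g$, and the nontangential-maximal-function bound \eqref{eq:possionmaximal} for $F^s$. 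Setting up this Carleson pairing with the correct fractional weight $t^{1-\nu}$ in the passage from $\varphi$ to $\laps{\nu}\varphi$ is the technical heart; the rest is bookkeeping with the trace theorems.
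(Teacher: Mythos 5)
Your proposal follows essentially the paper's own path: bound $|\nabla_{\R^{n+1}}\Phi^s(x,t)|\aleq [\varphi]_{C^\nu}\,t^{\nu-1}$ via Proposition~\ref{pr:hoelder}, apply Cauchy--Schwarz in $t$, H\"older in Lorentz spaces, and close with vertical square-function estimates. The one genuine deviation is in the $F^s$-factor for $s_2>0$: you integrate by parts in $t$ (with an absorption step, legitimate since the integral is finite for $f\in C_c^\infty$) to trade $t^{s_2-1/2}F^s$ for $t^{s_2+1/2}\partial_t F^s$ and then call the $\nu<0$ instance of \eqref{eq:Hspequivalencedt}; the paper instead writes $t^{s_2-1/2}F^s(x,t)=t^{-1/2}\kappa_t\ast\lapms{s_2}f(x)$ with $\kappa=\laps{s_2}p_1^s$, observes $\int\kappa=0$ precisely because $s_2>0$, and invokes Stein's tangential square-function bound directly. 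Both routes reach the same $\|\lapms{s_2}f\|_{L^{(p_2,q_2)}}$ bound; yours avoids the explicit kernel identity, the paper's avoids the absorption. One small slip in your $s_2=0$ sketch: the Carleson measure density should carry the weight $t^{1-2\nu}$, not $t^{1-\nu}$ (so that $t^{-\nu}\nabla_{\R^{n+1}}\Phi^s$ sits correctly in Lemma~\ref{la:FGcarlesonest}'s $t|\cdot|^2$), but you correctly flag that case as the technical heart, and the paper indeed handles it by the Carleson pairing from Proposition~\ref{pr:BMOusualestimate}'s proof with the replaced characterization $\sup_B\bigl(|B|^{-1}\int_{T(B)}t^{1-2\nu}|\nabla_{\R^{n+1}}F^s|^2\bigr)^{1/2}\aleq[\laps{\nu}f]_{BMO}+[D^\nu f]_{BMO}$.
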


\begin{appendix}
\section{Proofs and Literature for Section~\texorpdfstring{\ref{s:poissonchar}}{\ref*{s:poissonchar}}}\label{s:proofpoisson}
There are several versions of trace characterizations of function (harmonically or otherwise) extended to the upper half-space. We are going to use probably the most classical one, the tent spaces and Carleson measures. In \cite{Do-Thiele-2015} they introduce ``outer $L^p$-spaces, which might also offer a way to deal with the traces.
\subsection{The Fourier transform of the Poisson-Bessel potential}\label{s:fouriertransform}
We recall that the (generalized) Poisson potential $P^s_t f$ is given as a convolution operator $P^s_t f = p^s_t \ast f$, where the kernel $p^s_t$ is a Bessel-potential kernel,
\[
 p^s_t(z) := \frac{t^{s}}{\brac{|z|^2 + t^2}^{\frac{n+s}{2}}} = t^{-n} p^s_1(z/t).
\]
A direct computation gives
\begin{equation}\label{eq:poissonkernelest}
 \|p^s_t\|_{L^1(\R^n)} = C \quad \forall t > 0, \quad \|p^s_t\|_{L^\infty(\R^n)} = C\, t^{-n}.
\end{equation}

To apply the characterization for Triebel spaces of Bui and Candy \cite{BuiCandy-2015} one needs to find the growth of the Fourier transform $\mathcal{F}$ of $p^s_t$. 

The case $s =1$ is well known, $\mathcal{F} (p^1_t)(\xi) = e^{-c t|\xi|}$. Indeed, the conditions $(\partial_{tt} + \lap_x) (p^s_t \ast f) = 0$ and $p^s_t \ast f \Big|_{t=0} = f$ are transformed into an ordinary differential equation under the Fourier transform in $x$-variables. Namely, $\sigma(t) := \mathcal{F} (p^1_t)(\xi)$ has to satisfy the equation
\[
 \begin{cases}
  \partial_{tt} \sigma(t) - c |\xi|^2 \sigma(t) = 0 \quad &t \in \R_+\\
  \sigma(0) = 1.
 \end{cases}
\]
In this sense, some authors write $P_t^1 = e^{-ct \sqrt{-\lap}}$.

For $s \neq 1$ this is more involved. Observe that $P_t^s \neq e^{-ct \laps{s}}$. That extension $\tilde{F}(x,t) := e^{-t\laps{s}} f$ is in principle possible as well, is simpler and has the right boundary behavior. But its major, and for our purpose crucial, disadvantage is that the extended objects $\tilde{F}$ do not satisfy a \emph{local} equation, but rather the \emph{nonlocal} equation $(\partial_{tt} + \laps{2s}) \tilde{F} = 0$.

In our case, as introduced by Caffarelli and Silvestre \cite{CaffarelliSilvestre07}, $P_t^s$ is a Bessel potential. The following calculations for $s \neq 1$ can be found, e.g., in \cite[Proposition 7.6]{Hao-HA}. We have
\begin{equation}\label{eq:Fourierpoisson}
 \mathcal{F}(p^s_t)(\xi) = c_s \int_{0}^\infty \lambda^{\frac{s}{2}}\, e^{-\lambda-\frac{|t\xi|^2}{c\lambda }}\, \frac{d\lambda}{\lambda}.
\end{equation}
Here $c > 0$ is a uniform constant, and $c_s$ depends only on dimension and $s$.

In  \cite[Proposition 7.6]{Hao-HA} one can also find the following estimates: for any multiindex $\kappa$,
\[
 |\partial_{\xi^\kappa }\mathcal{F}(p^s_1)(\xi)| \aleq \max\{1,|\xi|^{s-|\kappa|}\} \quad \mbox{for $|\xi| \leq 2$},
\]
\[
 |\partial_{\xi^\kappa }\mathcal{F}(p^s_1)(\xi)| \aleq e^{-c_1 |\xi|} \quad \mbox{for $|\xi| > 2$},
\]

Moreover, setting $q^s_1 := (\partial_t p_t)\big |_{t =1}$,
\[
 |\partial_{\xi^\kappa }\mathcal{F}(q^s_1)(\xi)| \aleq \max\{|\xi|^{s-|\kappa|},1\} \quad \mbox{for $|\xi| \leq 2$}.
\]

\subsection{The pointwise estimates: Proposition~\ref{pr:pc:maximal}}
\begin{proof}[Proof of \eqref{eq:tinfty}, \eqref{eq:tinftyLinfty}]
Estimates \eqref{eq:tinfty} and \eqref{eq:tinftyLinfty} follow from a direct computation using convolution estimates.
\end{proof}
\begin{proof}[Proof of \eqref{eq:possionmaximal}]Estimate \eqref{eq:possionmaximal} follows from \cite[II, \textsection 2.1, Proposition, p. 57]{Stein-Harmonic-Analysis}, since $P_t^s f = p_t^{s} \ast f$, with
\[
 p_1^s(z) = c\frac{1}{\brac{1+|z|^2}^{\frac{n+s}{2}}}
\]
a kernel which is bounded, radial, and in $L^1(\R^n)$.
\end{proof}

\begin{proof}[Proof of \eqref{eq:possionmaximalpt}] For $s=1$ observe that $\partial_t P_t = c\laps{1} P_t$ (which follows from $\mathcal{F}(p_t^1)(\xi) = e^{-t|\xi|}$). Thus \eqref{eq:possionmaximalpt} follows from \eqref{eq:possionmaximal}.

The case $s \neq 1$ requires more work. We use the representation
\begin{equation}\label{eq:tptptfrep}
 t^{1-s} \partial_t P_t f(x) = c\int_{\R^n} (|x-z|^2 + t^2)^{\frac{2-s-n}{2}} \lap f(z)\ dz.
\end{equation}
To see \eqref{eq:tptptfrep}, one can use the Fourier representation in \eqref{eq:Fourierpoisson}. Alternatively, we solve an initial value problem for an ordinary differential equation: By \eqref{eq:spoissonpde}, 
\[
 \partial_{t} (t^{1-s} \partial_t P_t f) = - t^{1-s} P_t \lap_x f,
\]
so both sides of \eqref{eq:tptptfrep} solve the same equation. Moreover at $t=0$, both sides of \eqref{eq:tptptfrep} coincide: since $|x-z|^{2-s-n}$ is the kernel of the Riesz potential $\lapms{2-s}$,
\[
 \lim_{t\to 0} t^{1-s} \partial_t P_t f(x) = c\laps{s} f(x) = \lim_{t \to 0} c\int_{\R^n} (|x-z|^2 + t^2)^{\frac{2-s-n}{2}} \lap f(z)\ dz.
\]
The relation \eqref{eq:tptptfrep} is now established, since both sides of \eqref{eq:tptptfrep} solve the same equation in $t$ and have the same initial datum at $t=0$.

Now we set $g(z) := \laps{s} f(tz)$. Note that $\mathcal{M} g(x) = c\mathcal{M} \laps{s} f(x)$. \eqref{eq:tptptfrep} then follows once we can show
\begin{equation}\label{eq:goalmaximal}
 \sup_{|x-y| < 1} c\int_{\R^n} (|y-z|^2 + 1)^{\frac{2-s-n}{2}} \laps{2-s} g(z)\ dz \aleq \mathcal{M}g(x).
\end{equation}
To obtain \eqref{eq:goalmaximal} we use
\begin{equation}\label{eq:lapsgbessel}
 \laps{\gamma} (|x|^2 + 1)^{\frac{\gamma-n}{2}} =c (|x|^2 + 1)^{\frac{-\gamma-n}{2}} 
\end{equation}
This equation might look surprising at first -- in particular one might think its the wrong 'homogeneity' if one thinks of $\laps{\gamma}$ as the $s$-derivative. But $\laps{\gamma}$ behaves more like the Laplacian $\lap$, indeed we suggest to check this formula for $\gamma =2$. The computation \eqref{eq:lapsgbessel} can be found in Dyda, Kuznetsov, Kwasnicki's \cite[Corollary 1]{Dyda-Kuznetsov-Kwasnicki-2015} who compute several explicit fractional Laplacians in terms of the Meijer G-function. As the authors informed us, special cases of \eqref{eq:lapsgbessel} appear in the work of Samko, see for example \cite{Samko-2002}. It is also possible to obtain \eqref{eq:lapsgbessel} from the Fourier representation \eqref{eq:Fourierpoisson}.

To obtain \eqref{eq:goalmaximal} from \eqref{eq:lapsgbessel} simply integrate by parts
\[
\begin{split}
  &\sup_{|x-y| < 1} c\int_{\R^n} (|y-z|^2 + 1)^{\frac{2-s-n}{2}} \laps{2-s} g(z)\ dz\\
  =&\sup_{|x-y| < 1} c\int_{\R^n} (|y-z|^2 + 1)^{\frac{-(2-s)-n}{2}} g(z)\ dz.
 \end{split}
\]
Now the kernel $(|y-z|^2 + 1)^{\frac{-(2-s)-n}{2}}$ is bounded and belongs to $L^1$, so it falls into the realm of Stein's \cite[II, \textsection 2.1, Proposition, p. 57]{Stein-Harmonic-Analysis}. This proves \eqref{eq:possionmaximalpt}.
\end{proof}

\begin{proof}[Proof of \eqref{eq:possionmaximalptx}]
For \eqref{eq:possionmaximalptx}, a rougher estimate than \eqref{eq:lapsgbessel} suffices,
\begin{equation}\label{eq:roughestimate}
 \laps{1-s} (|x|^2 + 1)^{-\frac{n+s}{2}} \aleq (|x|^2 + 1)^{-\frac{n+1}{2}}.
\end{equation}
Then 
\[
\begin{split}
  &\sup_{|x-y| < 1} c\int_{\R^n} (|y-z|^2 + 1)^{\frac{-n-s}{2}} \nabla f(z)\ dz\\
  \aleq &\sup_{|x-y| < 1} c\int_{\R^n} (|y-z|^2 + 1)^{-\frac{n+1}{2}} |\nabla^s f(z)|\ dz.
 \end{split}
\]
Again we can conclude with Stein's \cite[II, \textsection 2.1, Proposition, p. 57]{Stein-Harmonic-Analysis}.
\end{proof}
\begin{proof}[Proof of \eqref{eq:maximalpotential}]
We have
\[
 t^\sigma P_t^s f = \int_{\R^n} t^{-n}\kappa\brac{\frac{x-y}{t}}\ \lapms{\sigma} f(y)\ dy,
\]
where
\[
 \kappa(z) = \laps{\sigma} \frac{1}{\brac{|z|^2 + 1}^{\frac{n+s}{2}}}.
\]
For $\sigma \geq 0$, $\kappa \in L^1(\R^n) \cap L^\infty(\R^n)$ is radial, and we conclude again with \cite[II, \textsection 2.1, Proposition, p. 57]{Stein-Harmonic-Analysis}.
\end{proof}

\subsection{Proof of Propositions~\texorpdfstring{\ref{pr:Sobolevspaces}}{\ref*{pr:Sobolevspaces}}, \texorpdfstring{\ref{pr:Lorentzspaces}}{\ref*{pr:Lorentzspaces}}, \texorpdfstring{\ref{pr:BMOcharacterization}}{\ref*{pr:BMOcharacterization}}}

\begin{proof}[Proof of Proposition~\ref{pr:Sobolevspaces}]
Proposition~\ref{pr:Sobolevspaces} follows from the Besov- and Triebel space characterization by Bui-Candy \cite[Theorem 1.1, Theorem 1.3.]{BuiCandy-2015}. To ensure the ``Cancellation condition (C1)'' in their article, one needs to use the growth estimates from Section~\ref{s:fouriertransform}. 
\end{proof}

\begin{proof}[Proof of Proposition~\ref{pr:Lorentzspaces}]
The claim follows by estimates on so-called (non-tangential) square functions. More precisely, we use \cite[Chapter I, \textsection 8.23, p.46]{Stein-Harmonic-Analysis}. There it is shown that
\[
\left \|x \mapsto \left |\int_{(y,t): |y-x|<t} t^{-1-n}|  q_t \ast f |^2\ dy\, dt \right |^{\frac{1}{2}} \right \|_{L^{(p,q)}(\R^n)} \aeq \|f\|_{L^{(p,q)}(\R^n)},
\]
where $q_t = t^{-n}q(z/t)$ and $q$ is suitably growing radial kernel, belongs to $L^\infty(\R^n) \cap L^1(\R^n)$, and $\int_{\R^n} q = 0$. In particular $t\partial_t p^s_t$ satisfies these conditions, and thus
\[
\left \|x \mapsto \left |\int_{(y,t): |y-x|<t} t^{1-n}|  \partial_t F^s(x,t) |^2\ dy\, dt \right |^{\frac{1}{2}} \right \|_{L^{(p,q)}(\R^n)} \aeq \|f\|_{L^{(p,q)}(\R^n)},
\]
More generally, with help of the representation \eqref{eq:tptptfrep}, we may find a suitable $q$ when $0 \leq \nu < s$ such that
\[
 t^{1-\sigma}\partial_t p_t^s \ast f = q_t \ast \laps{\sigma} f. 
\]
This leads to \eqref{eq:squarefctchar}. \eqref{eq:squarefctnablachar} follows by the same argument.
\end{proof}

\begin{proof}[Proof of Proposition~\ref{pr:BMOcharacterization}]
We refer to \cite[Theorem 3. Chapter IV, \textsection 4.3., p.159]{Stein-Harmonic-Analysis} together with the remark on the kernel in \cite[Chapter IV, \textsection 4.4.3., p.165]{Stein-Harmonic-Analysis}. 

This result can also be recovered via the Poisson characterizations of Triebel-Lizorkin spaces by \cite{BuiCandy-2015}, using the Triebel-Lizorkin space characterization of BMO.
\end{proof}

\subsection{Reflected harmonic extensions: Proof of Proposition~\ref{pr:BMOextension}}\label{s:BMOextension}
In \cite[Appendix 3]{Brezis-Nirenberg-1996} Brezis and Nirenberg, together with Mironescu, show that the harmonic extension of a $VMO$-function defined on the boundary of a bounded domain $\partial \Omega$ extends to a $VMO$-function in $\Omega$. Their definition of $BMO(\Omega)$, \cite[\textsection II.1, Definition 1, p.313]{Brezis-Nirenberg-1996}, however excludes balls that intersect the boundary. In the following we adapt their proof to our situation.

From now on we denote with $F(x,t) \equiv F^e(x,t) := P^1_{|t|} f$ the harmonic extension to $\R^{n+1}_+$ of $f: \R^n \to \R$ reflected evenly across $\R^n$. 

The first step is to replace $F$ by another function which easier to compute. For $x \in \R^n$, $t \in (0,\infty)$ we pick the ball $B_t(x) \subset \R^n$ and define
\[
 G(x,t) := \mvint_{B_t(x)} f \equiv (f)_{B_t(x)}.
\]
This is possible due to the embedding $L^\infty \subset BMO$ and the following Lemma, cf. \cite[Lemma A3.1.]{Brezis-Nirenberg-1996}.
\begin{lemma}\label{la:meanvaluecompare}
There is a uniform constant $c \in \R$ such that
\[
 \sup_{(x,t) \in \R^{n+1}} \left |c\ P_{|t|} f(x) - \mvint_{B_{|t|}(x)} f \right | \aleq [f]_{BMO(\R^n)}.
\]
In other words, 
\[
 \|F -G \|_{L^\infty(\R^{n+1})} \leq [f]_{BMO(\R^n)}.
\]
\end{lemma}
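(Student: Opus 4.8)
The plan is to reduce the estimate to a single scale using the translation/dilation invariance of both sides, and then to compare the Poisson averaging kernel directly with the normalized indicator of the unit ball, exploiting the extra spatial decay of the Poisson kernel.

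First I would fix the normalization: choose $c$ so that $c\int_{\R^n} p^1_1 = 1$, i.e. $c\,p^1_1$ is the usual (probability) Poisson kernel at height one. Since $p^1_{|t|}(z) = |t|^{-n}p^1_1(z/|t|)$ and since the ball average $(f)_{B_{|t|}(x)}$ depends only on the function $z\mapsto f(x+|t|z)$ restricted to $B_1(0)$, one has $c\,P_{|t|}f(x) - (f)_{B_{|t|}(x)} = c\,P_1 g(0) - (g)_{B_1(0)}$ with $g(z) := f(x+|t|z)$. Because the $BMO$-seminorm is invariant under translations and dilations, $[g]_{BMO(\R^n)} = [f]_{BMO(\R^n)}$, so it suffices to prove
\[
 \bigl| c\,P_1 g(0) - (g)_{B_1(0)} \bigr| \aleq [g]_{BMO(\R^n)}.
\]

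Next I would write this difference as a single integral against the comparison kernel $K(z) := c\,p^1_1(z) - |B_1(0)|^{-1}\bbbone_{B_1(0)}(z)$, which satisfies $\int_{\R^n} K = 0$, $\|K\|_{L^\infty}\aleq 1$ (note $p^1_1\le 1$, so $K$ is bounded on $B_1(0)$), and $|K(z)|\aleq (1+|z|)^{-(n+1)}$. Using $\int K = 0$ to subtract the constant $(g)_{B_1(0)}$,
\[
 c\,P_1 g(0) - (g)_{B_1(0)} = \int_{\R^n} K(z)\,\bigl(g(z) - (g)_{B_1(0)}\bigr)\,dz,
\]
and I would estimate $\int_{\R^n}|K|\,|g-(g)_{B_1(0)}|$ by the dyadic decomposition $\R^n = B_1(0)\cup\bigcup_{j\ge 0}A_j$ with $A_j := B_{2^{j+1}}(0)\setminus B_{2^j}(0)$. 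The $B_1(0)$-term is $\aleq \|K\|_{L^\infty}|B_1|\,[g]_{BMO}$. On $A_j$ one has $|K|\aleq 2^{-j(n+1)}$, while the standard telescoping bound $|(g)_{B_{2^{j+1}}} - (g)_{B_1}|\aleq (j+1)[g]_{BMO}$ gives $\int_{B_{2^{j+1}}}|g-(g)_{B_1}|\aleq 2^{jn}(j+1)[g]_{BMO}$; hence the $A_j$-contribution is $\aleq 2^{-j}(j+1)[g]_{BMO}$, and $\sum_{j\ge 0}2^{-j}(j+1)<\infty$.

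The only point that is not completely routine is the convergence of this tail sum: it works precisely because $p^1_1(z)\sim |z|^{-(n+1)}$ decays one full power faster than the $|z|^n$ volume growth of the annuli, so the logarithmic growth of the mean oscillation (the factor $j+1$) is absorbed by the geometric factor $2^{-j}$. So the genuine, if mild, decay mechanism sits in that exponent $n+1>n$, and the rest is bookkeeping with dyadic $BMO$ estimates.
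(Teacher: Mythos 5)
Your proof is correct and takes essentially the same approach as the paper: normalize $c$ so the Poisson kernel is a probability density, exploit the cancellation against the constant $(f)_{B}$, decompose into a unit ball plus dyadic annuli where the kernel decays like $2^{-j(n+1)}$, and control the mean-value drift $|(f)_{B_{2^{j}}}-(f)_{B_1}|\aleq j\,[f]_{BMO}$ by telescoping so that $\sum_j j\,2^{-j}$ closes the estimate. The only cosmetic difference is that you reduce to scale $t=1$, $x=0$ by dilation/translation invariance before estimating, whereas the paper carries $x$ and $t$ along throughout; the underlying mechanism (decay exponent $n+1>n$ absorbing the logarithmic growth from BMO) is identical.
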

\begin{proof}
Pick $c := (P^1_t[1])^{-1} \in \R$ such that 
\[
|cP_t f(x) - (f)_{B_{t}(x)} | 
\aleq \int_{\R^n} \frac{t}{\brac{|x-z|^2 + t^2}^{\frac{n+1}{2}}} |f(z) - (f)_{B_t(x)}|\ dz.
\]
Now we split the integration domain into $B_{t}(x)$ and annuli
\[
 \leq \int_{B_t(x)} \frac{t}{\brac{|x-z|^2 + t^2}^{\frac{n+1}{2}}} |f(z) - (f)_{B_t(x)}|\ dz + \sum_{k=1}^\infty \int_{B_{2^kt}(x)\backslash B_{2^{k-1}t}(x) } \frac{t}{\brac{|x-z|^2 + t^2}^{\frac{n+1}{2}}} |f(z) - (f)_{B_t(x)}|\ dz.
\]
Estimating the kernel in these domains we have
\[
 \aleq \mvint_{B_t(x)} |f(z) - (f)_{B_t(x)}|\ dz + \sum_{k=1}^\infty 2^{-k}\mvint_{B_{2^kt}(x) }  |f(z) - (f)_{B_t(x)}|\ dz.
\]
On the first term we use the definition of BMO, in the second term we want to do the same and thus introduce $(f)_{B_{2^kt}(x)}$.
\[
 \aleq [f]_{BMO} + \sum_{k=1}^\infty 2^{-k}\mvint_{B_{2^kt}(x) }  |f(z) - (f)_{B_{2^kt}(x)}|\ dz + \sum_{k=1}^\infty 2^{-k} |(f)_{B_{2^kt}(x)} - (f)_{B_t(x)}|\ dz\\
\]
Now we can estimate the second term again with the BMO-term and the sum converges. For the third term we write the difference of mean values as a telescoping sum,
\[
 \aleq [f]_{BMO} + \sum_{k=1}^\infty \sum_{j=1}^k 2^{-k} | (f)_{B_{2^{j}t}(x)} - (f)_{B_{2^{j-1}t}(x)}|\\
\]
Again we estimate by the BMO-norm, and are left with
\[
\aleq [f]_{BMO} + \sum_{k=1}^\infty \sum_{j=1}^k 2^{-k} [f]_{BMO}.
\]
To see that this sum converges, we use the Fubini theorem for series. Namely,
\[
 \sum_{k=1}^\infty \sum_{j=1}^k 2^{-k} = \sum_{j=1}^\infty\sum_{k=j}^\infty  2^{-k} = \sum_{j=1}^\infty2^{-j} < \infty.
\]
\end{proof}

To measure the BMO-norm, in Definition~\ref{eq:BMOseminorm} one can replace the balls with other objects such as squares, cylinders. To this end, in $\R^{n+1}$ we consider the following cylinders
\[
 \tilde{D}^{n+1}_\rho(x_0,t_0), := B^n_{\rho}(x_0) \times (t_0-\rho,t_0+\rho) \quad x_0 \in \R^n, t_0 \in \R.
\]
The following estimate was proven in \cite[\textsection II.3, Lemma 7,  p.327]{Brezis-Nirenberg-1996}. It treats the case when the cylinder is away from the boundary.

\begin{lemma}[Estimates away from the boundary]\label{la:BMOawayest}
The following holds:
\[
\sup_{t_0 \in \R, x_0 \in \R^n} \sup_{\rho > 0:\, 2\rho < |t_0|} \mvint_{\tilde{D}^{n+1}_\rho(x_0,t_0)} |G - (G)_{\tilde{D}^{n+1}_\rho(x_0,t_0)} | \leq C_{n} \ [f]_{BMO(\R^n)}.
\]
\end{lemma}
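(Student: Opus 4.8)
The plan is to exploit the standing assumption $2\rho < |t_0|$, which forces the cylinder $\tilde{D}^{n+1}_\rho(x_0,t_0)$ to lie entirely on one side of the hyperplane $\{t=0\}$ and makes all the averaging balls $B_{|t|}(x)$ entering the definition of $G$ over that cylinder mutually comparable in size and location. In particular $G$ is smooth on $\tilde{D}^{n+1}_\rho(x_0,t_0)$, so all integrals below are well defined.

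First I would reduce to a pointwise estimate: for any constant $a \in \R$,
\[
 \mvint_{\tilde{D}^{n+1}_\rho(x_0,t_0)} \abs{G - (G)_{\tilde{D}^{n+1}_\rho(x_0,t_0)}} \leq 2\, \mvint_{\tilde{D}^{n+1}_\rho(x_0,t_0)} \abs{G - a},
\]
so it suffices to exhibit one choice of $a$ with $\sup_{(x,t)\in \tilde{D}^{n+1}_\rho(x_0,t_0)}\abs{G(x,t)-a} \aleq [f]_{BMO(\R^n)}$, uniformly over all admissible triples $(x_0,t_0,\rho)$.

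The choice is $a := (f)_{B_{2|t_0|}(x_0)}$. For $(x,t)\in \tilde{D}^{n+1}_\rho(x_0,t_0)$ one has $|x-x_0|<\rho<\tfrac12|t_0|$ and $\tfrac12|t_0| < |t| < \tfrac32|t_0|$; hence $B_{|t|}(x) \subset B_{|t|+|x-x_0|}(x_0)\subset B_{2|t_0|}(x_0)$, and $|B_{|t|}(x)| / |B_{2|t_0|}(x_0)| = (|t|/(2|t_0|))^n \geq 4^{-n}$. Consequently
\[
 \abs{G(x,t)-a} = \abs{(f)_{B_{|t|}(x)} - (f)_{B_{2|t_0|}(x_0)}} \leq \mvint_{B_{|t|}(x)} \abs{f-(f)_{B_{2|t_0|}(x_0)}} \leq 4^n \mvint_{B_{2|t_0|}(x_0)} \abs{f-(f)_{B_{2|t_0|}(x_0)}} \leq 4^n\,[f]_{BMO(\R^n)},
\]
where the middle inequality enlarges the domain of integration from $B_{|t|}(x)$ to $B_{2|t_0|}(x_0)$ and pays the bounded volume ratio. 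Averaging this uniform bound over the cylinder and inserting it into the first display gives the lemma with $C_n = 2\cdot 4^n$.

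The only delicate point is the geometric bookkeeping of the comparison constants for $B_{|t|}(x)$ against the fixed ball $B_{2|t_0|}(x_0)$, which works precisely because $2\rho<|t_0|$ traps $|t|$ between $\tfrac12|t_0|$ and $\tfrac32|t_0|$ and keeps the centers within $\tfrac12|t_0|$ of $x_0$; beyond this there is no analytic obstacle.
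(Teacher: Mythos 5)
Your proof is correct and follows essentially the same route as the paper: both exploit $2\rho<|t_0|$ to place every averaging ball $B_{|t|}(x)$ inside a single comparison ball centered at $x_0$ of comparable radius, and then pay a bounded volume ratio against the BMO oscillation on that big ball (the paper packages this as the Brezis--Nirenberg Lemma A.4, which you simply re-derive inline). The only cosmetic differences are your use of the "subtract a constant costs a factor $2$" reduction instead of comparing $G$ at two points of the cylinder, and your slightly larger comparison ball $B_{2|t_0|}(x_0)$ in place of $B_{|t_0|+2\rho}(x_0)$.
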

\begin{proof}
Fix $x_0 \in \R^n$, $t_0 \in \R$ and $\rho > 0$. Set
\[
\mathcal{I} := \mvint_{\tilde{D}^{n+1}_\rho(x_0,t_0)} |G - (G)_{\tilde{D}^{n+1}_\rho(x_0,t_0)} | \aleq \mvint_{\tilde{D}^{n+1}_\rho(x_0,t_0)}\mvint_{\tilde{D}^{n+1}_\rho(x_0,t_0)} |G(|s_1|,x_1) - G(|s_2|,x_2)|\ d(s_1,x_1) d(s_2,x_2).
\]
For $s_1,s_2 \in (t_0-\rho,t_0+\rho)$, $x_1,x_2 \in B^n_\rho(x_0)$ we have
\[
 B^n_{|s_1|}(x_1),\ B^n_{|s_2|}(x_2) \subset B^n_{|t_0|+2\rho}(x_0).
\]
Consequently, in view of \cite[Lemma A.4, p. 36]{Brezis-Nirenberg-1995} which states that for $A \subset B$,
\[
 |\mvint_{A} g - \mvint_{B} g| \aleq \frac{|B|}{|A|} \mvint_{B} |g - \mvint_{B} g|,
\]
we have
\[
 |G(|s_1|,x_1) - (f)_{B^n_{|t_0|+2\rho}(x_0)}| \aleq \frac{|B^n_{|t_0|+2\rho}(x_0)|}{|B^n_{|s_1|}(x_1)|}\ [f]_{BMO(\R^n)}.
\]
With the assumption $|t_0| > 2\rho$,
\[
 \frac{|B^n_{|t_0|+2\rho}(x_0)|}{|B^n_{|s_1|}(x_1)|} \leq C_n \brac{2\frac{|t_0|+2\rho}{\max \{\rho,|t_0|\}}}^n  \leq 6^n\, C_n.
\]
Consequently,
\[
 |G(|s_1|,x_1) - (f)_{B^n_{|t_0|+2\rho}(x_0)}|,\ |G(|s_2|,x_2) - (f)_{B^n_{|t_0|+2\rho}(x_0)}| \aleq [f]_{BMO(\R^n)}.
\]
Plugging this into $\mathcal{I}$, we obtain $\mathcal{I} \aleq [f]_{BMO(\R^n)}$.
\end{proof}

Since we want to find an BMO-estimate up to (and over the) boundary $\R^n \times \{0\}$, we need to accompany Lemma~\ref{la:BMOawayest} with an estimate close to the boundary $\R^n \times \{0\}$. Namely we have
\begin{lemma}[Close to the boundary]\label{la:BMOcloseest}
For any $\Lambda > 0$ the following holds.
\[
\sup_{t_0 \in \R, x_0 \in \R^n}\, \sup_{\rho > 0:\, |t_0| \leq \Lambda \rho} \mvint_{\tilde{D}^{n+1}_\rho(x_0,t_0)} |G - (G)_{\tilde{D}^{n+1}_\rho(x_0,t_0)} | \leq C_{n} (\Lambda +2)^n\ [f]_{BMO(\R^n)}.
\]
\end{lemma}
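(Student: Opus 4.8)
The plan is to mirror the scheme of the proof of Lemma~\ref{la:BMOawayest}: dominate the cylinder oscillation of $G$ by a pointwise quantity, and then integrate. Fix $x_0\in\R^n$, $t_0\in\R$ and $\rho>0$ with $|t_0|\le\Lambda\rho$, abbreviate $\tilde D:=\tilde D^{n+1}_\rho(x_0,t_0)$, and set $R:=(\Lambda+2)\rho$. The geometric point is that every $(x_1,s_1)\in\tilde D$ satisfies $|s_1|\le|t_0|+\rho\le(\Lambda+1)\rho$ and $|x_1-x_0|<\rho$, so that $B^n_{|s_1|}(x_1)\subset B^n_R(x_0)$; in particular $G(x_1,s_1)=(f)_{B^n_{|s_1|}(x_1)}$ involves only averages of $f$ over balls sitting inside the \emph{fixed} ball $B^n_R(x_0)$. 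Since $\mvint_{\tilde D}|G-(G)_{\tilde D}|\le 2\mvint_{\tilde D}|G(x_1,s_1)-(f)_{B^n_R(x_0)}|\,d(x_1,s_1)$, the whole task reduces to a pointwise estimate for $|(f)_{B^n_{|s_1|}(x_1)}-(f)_{B^n_R(x_0)}|$, integrated over $\tilde D$.

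For the pointwise estimate I would chain through \emph{concentric} auxiliary balls centered at $x_1$. From $B^n_\rho(x_1)\subset B^n_{2\rho}(x_0)\subset B^n_R(x_0)$, with volume ratios $2^n$ and $((\Lambda+2)/2)^n$, the comparison inequality \cite[Lemma A.4, p. 36]{Brezis-Nirenberg-1995} already yields $|(f)_{B^n_\rho(x_1)}-(f)_{B^n_R(x_0)}|\aleq(\Lambda+2)^n[f]_{BMO(\R^n)}$. It then remains to compare the concentric balls $B^n_{|s_1|}(x_1)$ and $B^n_\rho(x_1)$: if $|s_1|\ge\rho$ the same lemma applies directly, the volume ratio being $(|s_1|/\rho)^n\le(\Lambda+1)^n$; if $|s_1|<\rho$ I would telescope along the dyadic radii $2^{-j}\rho$ for $j=0,1,\dots,\lceil\log_2(\rho/|s_1|)\rceil$, each consecutive comparison costing a dimensional multiple of $[f]_{BMO(\R^n)}$, so that $|(f)_{B^n_{|s_1|}(x_1)}-(f)_{B^n_\rho(x_1)}|\aleq(1+\log_+(\rho/|s_1|))[f]_{BMO(\R^n)}$. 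Combining these,
\[
|G(x_1,s_1)-(f)_{B^n_R(x_0)}|\aleq\brac{(\Lambda+2)^n+\log_+(\rho/|s_1|)}[f]_{BMO(\R^n)}.
\]

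Finally I would integrate this over $\tilde D$. The first term contributes a dimensional multiple of $(\Lambda+2)^n[f]_{BMO(\R^n)}$, while the second term depends on $s_1$ only and is harmless since it is integrable:
\[
\frac{1}{2\rho}\int_{t_0-\rho}^{t_0+\rho}\log_+(\rho/|s_1|)\,ds_1\le\frac{1}{2\rho}\int_{-\rho}^{\rho}\log(\rho/|s|)\,ds=\int_0^1\log(1/u)\,du=1.
\]
Thus $\mvint_{\tilde D}|G(x_1,s_1)-(f)_{B^n_R(x_0)}|\,d(x_1,s_1)\le C_n(\Lambda+2)^n[f]_{BMO(\R^n)}$, and the lemma follows.

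The only genuine obstacle --- and precisely what is new relative to Lemma~\ref{la:BMOawayest} --- is the degeneration of the ball $B^n_{|s_1|}(x_1)$ as $s_1\to0$, which happens because the cylinder $\tilde D$ now reaches and crosses the boundary $\R^n\times\{0\}$. A crude use of the comparison lemma between $B^n_{|s_1|}(x_1)$ and $B^n_R(x_0)$ would cost a factor $(R/|s_1|)^n$, and $\int_0|s_1|^{-n}\,ds_1$ diverges for every $n\ge1$; routing through $B^n_\rho(x_1)$ and telescoping dyadically is exactly what trades this non-integrable power for the integrable logarithm $\log_+(\rho/|s_1|)$. The remaining steps are a routine sequence of volume-ratio comparisons.
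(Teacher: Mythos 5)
Your proof is correct, but it takes a route genuinely different from the paper's. You mirror the structure of the proof of Lemma~\ref{la:BMOawayest}: center the oscillation at a fixed constant $(f)_{B_R(x_0)}$ with $R=(\Lambda+2)\rho$, compare $(f)_{B_{|s_1|}(x_1)}$ to it pointwise via \cite[Lemma A.4]{Brezis-Nirenberg-1995}, and handle the degeneration as $|s_1|\to 0$ by dyadic telescoping, which trades the naive $|s_1|^{-n}$ volume-ratio cost for the integrable $\log_+(\rho/|s_1|)$, finishing with $\int_0^1\log(1/u)\,du=1$. The paper instead works with the double average $\mvint_{\tilde{D}}\mvint_{\tilde{D}}|G(y_1,s_1)-G(y_2,s_2)|$, writes each $G(y_i,s_i)$ as a rescaled mean $\mvint_{B_1(0)} f(y_i+|s_i|z_i)\,dz_i$, and applies Fubini plus the substitution $y_i\mapsto y_i-|s_i|z_i$; the shifted $\rho$-balls all land inside $B_{(\Lambda+2)\rho}(x_0)$, leaving a double mean of $|f(y_1)-f(y_2)|$ over a single fixed ball, which is bounded directly by the BMO seminorm. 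The paper's Fubini-and-substitution trick thus bypasses the degeneration at $s_1=0$ entirely, so no telescoping or logarithm appears --- which is why the paper's near-boundary proof is not simply an adaptation of its away-from-boundary proof. Your argument confronts the degeneration head-on: it costs a few more lines, but it makes the parallel with Lemma~\ref{la:BMOawayest} transparent and isolates cleanly the analytic reason the near-boundary case works, namely that the logarithm, unlike the power $|s_1|^{-n}$, is integrable near $s_1=0$.
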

\begin{proof}
Fix $x_0 \in \R^n$, $t_0 \in \R$ and $\rho > 0$. Set
\[
\begin{split}
\mathcal{I} :=& \mvint_{\tilde{D}^{n+1}_\rho(x_0,t_0)} |G - (G)_{\tilde{D}^{n+1}_\rho(x_0,t_0)} |\\
\aleq &\rho^{-2(n+1)} \int_{t_0-\rho}^{t_0+\rho}\int_{t_0-\rho}^{t_0+\rho} \int_{B_\rho(x_0)} \int_{B_\rho(x_0)} |G(y_1,s_1) - G(y_2,s_2)|\ dy_1\, dy_2\ ds_1\, ds_2.\\
\end{split}
\]
Now
\[
 |G(y_1,s_1) - G(y_2,s_2)| \leq \mvint_{B_1(0)}\mvint_{B_1(0)} |f(y_1 + |s_1| z_1) - f(y_2 + |s_2| z_2)|\ dz_1 \ dz_2.
\]
Consequently, with Fubini
\[
\begin{split}
&\int_{B_\rho(x_0)}\int_{B_\rho(x_0)} |G(y_1,s_1) - G(y_2,s_2)| dy_1\ dy_2\\
\aleq & \mvint_{B_1(0)}\mvint_{B_1(0)} \int_{B_\rho(x_0)}\int_{B_\rho(x_0)} |f(y_1 + |s_1| z_1) - f(y_2 + |s_2| z_2)|\ dy_1\ dy_2\ dz_1 \ dz_2.
\end{split}
 \]
Next, by substitution,
\[
 \begin{split}
&  \int_{B_\rho(x_0)}\int_{B_\rho(x_0)} |f(y_1 + |s_1| z_1) - f(y_2 + |s_2| z_2)|\ dy_1\ dy_2\\
=& \int_{B_\rho(x_0 + |s_1| z_1)}\int_{B_\rho(x_0+|s_2| z_2)} |f(y_1) - f(y_2 )|\ dy_1\ dy_2.
 \end{split}
\]
Now if $z_1,z_2 \in B_1(0)$, $s_1,s_2 \in (t_0-\rho,t_0+\rho)$ 
\[
 B_\rho(x_0 + |s_1| z_1), B_\rho(x_0+|s_2| z_2)  \subset B_{t_0 + 2 \rho} (x_0).
\]
We thus have
\[
 \mathcal{I} \aleq \rho^{-2n} \int_{B_{t_0 + 2\rho} (x_0)}\int_{B_{t_0 + 2\rho} (x_0)} |f(y_1) - f(y_2 )|\ dy_1\ dy_2,
\]
and with $|t_0| \leq \Lambda \rho$,
\[
 \aleq \rho^{-2n} \int_{B_{(\Lambda + 2)\rho} (x_0)}\int_{B_{(\Lambda + 2)\rho}(x_0)} |f(y_1) - f(y_2 )|\ dy_1\ dy_2.
\]
Finally we use the definition of BMO, \eqref{eq:BMOseminorm}, and have
\[
\aleq  (\Lambda+2)^{n}\ [f]_{BMO}.
 \]
\end{proof}

Now we have all the ingredients for the proof of Proposition~\ref{pr:BMOextension}:
\begin{proof}
From Lemma~\ref{la:BMOawayest} and Lemma~\ref{la:BMOcloseest} we obtain
\[
 [G]_{BMO(\R^{n+1})} \aleq [f]_{BMO(\R^n)}.
\]
With the help of the embedding $L^\infty \subset BMO$ and Lemma~\ref{la:meanvaluecompare} we obtain
\[
  [F^e]_{BMO(\R^{n+1})} \leq 2\|F^e - G\|_{L^\infty} + [G]_{BMO(\R^{n+1})} \aleq [f]_{BMO(\R^n)}.
\]
Proposition~\ref{pr:BMOextension} is proven.
\end{proof}

\subsection{Proof of Proposition~\texorpdfstring{\ref{pr:hoelder}}{\ref*{pr:hoelder}}}
\begin{proof}[Proof of \eqref{eq:Hoelderequivalence}]
In Stein's \cite[V, \textsection 4.2, Proposition 7, p.142]{Stein-Singular-Integrals} the following is proven for any $\nu < s$.
\[
 \|f\|_{\infty} + \sup_{t > 0} \sup_{x \in \R^n}| |t^{1-\nu} \partial_t P^s_t f| \aeq \|f\|_{\infty}  + [f]_{C^{0,\nu}(\R^n)}
\]
Indeed, it is proven for $P^1_t f$, but this easily extends to $P^s_t f$. Apply this equation to $f_k(x) := k^{-\nu} f(kx)$, and one has
\[
 k^{-\nu} \|f\|_{\infty} + \sup_{t > 0} \sup_{x \in \R^n}| |t^{1-\nu} \partial_t P_t f| \aeq k^{-\nu} \|f\|_{\infty}  + [f]_{C^{0,\nu}(\R^n)}.
\]
Letting $k \to \infty$, we obtain \eqref{eq:Hoelderequivalence}.
\end{proof}
\begin{proof}[Proof of \eqref{eq:Hoelderequivalencelimit}]
Since $\brac{|\cdot|^2 + 1}^{-\frac{n+s}{2}}$ is integrable, with H\"older inequality
\[
 |\nabla_x P^s_t f(x)| \aleq \int_{\R^n} \frac{t^s}{\brac{|x-z|^2 + t^2}^{\frac{n+s}{2}}} |\nabla f(z)|\ dz \aleq \|\nabla f\|_{L^\infty}.
\]
This shows \eqref{eq:Hoelderequivalencelimit} for $\nu = 1$.

Let us now more generally consider any $0 < \nu \leq 1$, 
\[
 |\nabla_x P^s_t f(x)| = \left |\int_{\R^n} t^s \nabla_z \brac{|x-z|^2 + t^2}^{-\frac{n+s}{2}}  f(z) dz \right |.
\]
Now observe that $\int_{\R^n} \nabla_z \brac{|x-z|^2 + t^2}^{-\frac{n+s}{2}} = 0$, and thus
\[
 = \left |\int_{\R^n} t^s |x-z|^\nu \nabla_z \brac{|x-z|^2 + t^2}^{-\frac{n+s}{2}}  \frac{f(z)-f(x)}{|x-z|^\nu} dz \right |.
\]
Since $\nu \leq 1$,
\[
 \aleq  [f]_{C^\nu}\ \left |\int_{\R^n} t^s |z|^\nu \nabla_z \brac{|z|^2 + t^2}^{-\frac{n+s}{2}}  dz \right |.
\]
If we set $\kappa(z) := |z|^\nu \nabla_z \brac{|z|^2 + 1}^{-\frac{n+s}{2}}$,
\[
 = [f]_{C^\nu}\ t^{\nu-1}\ \|\kappa \|_{L^1(\R^n)}.
\]
Since $\kappa$ is integrable whenever $\nu < 1+s$, we have shown
\[
 |\nabla_x P^s_t f(x)| \aleq t^{\nu-1}\ [f]_{C^\nu}. 
\]
Proposition~\ref{pr:hoelder} is proven. 
\end{proof}

\subsection{On Theorem~\texorpdfstring{\ref{th:BTcharacterization}}{\ref*{th:BTcharacterization}}}
The theorem follows from the work by Bui and Candy \cite[Theorem 1.1, Theorem 1.3.]{BuiCandy-2015}. As mentioned above, in particular the ``Cancellation condition (C1)'' has to be ensured, but this can be checked with the explicit representation and estimates for the Fourier transform of the Poisson kernel in Section~\ref{s:fouriertransform}. \qed

\section{Proofs and Literature for Section~\texorpdfstring{\ref{s:bbestimates}}{\ref*{s:bbestimates}}}\label{s:proofbbestimates}
We need an extension of the $L^\infty$-$L^1$-H\"older-inequality on $\R^{n+1}_+$. We have the following estimate between Carleson-measures and square functions, which can be found in \cite[IV, \textsection 4.4, Proposition, p. 162]{Stein-Harmonic-Analysis}. 
\begin{lemma}\label{la:FGcarlesonest}
\[
 \int_{\R^{n+1}_+} F(x,t)\, G(x,t)\, d(x,t)\aleq
 \]
 \[\sup_{B \subset \R^n \mbox{ balls} } \brac{|B|^{-1} \int_{T(B)} t| F(y,t)|^2 dy\, dt}^{\frac{1}{2}}\ \int_{\R^n} \brac{\int_{|x-y|<t} |G(y,t)|^2 \frac{dy dt}{t^{n+1}} }^{\frac{1}{2}}dx
 \]
where the supremum is over balls $B \subset \R^n$ and $T(B)$ is the ``tent'' over $B$ in $\R^n$, i.e. $T(B_r(x_0)) = \{(x,t) \in \R^{n+1}_+: |x-x_0| < r-t\}$.
\end{lemma}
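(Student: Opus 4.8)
The plan is to prove Lemma~\ref{la:FGcarlesonest} as the classical tent-space pairing between a Carleson measure and a Lusin area integral, following Stein \cite[IV, \textsection 4.4]{Stein-Harmonic-Analysis}; replacing $F,G$ by $|F|,|G|$ we may assume $F,G\ge 0$. Write $\Gamma(x):=\{(y,t)\in\R^{n+1}_+:|y-x|<t\}$ for the cone over $x\in\R^n$. Since $|\{x\in\R^n:|y-x|<t\}|=c_n t^n$, Fubini gives, for any nonnegative $H$,
\[
 \int_{\R^{n+1}_+} H(y,t)\,dy\,dt \;=\; c_n^{-1}\int_{\R^n}\brac{\int_{\Gamma(x)} H(y,t)\,\frac{dy\,dt}{t^n}}\,dx .
\]
Applying this with $H=FG$ and then Cauchy--Schwarz inside each cone, distributing $t^{-n}=(t^{1/2}t^{-n/2})(t^{-1/2}t^{-n/2})$ between the two factors, yields
\[
 \int_{\R^{n+1}_+} F\,G\,dy\,dt \;\aleq\; \int_{\R^n} A(F)(x)\,S(G)(x)\,dx ,
\]
where $A(F)(x):=\brac{\int_{\Gamma(x)}|F(y,t)|^2\,t^{1-n}\,dy\,dt}^{1/2}$ and $S(G)(x):=\brac{\int_{\Gamma(x)}|G(y,t)|^2\,t^{-n-1}\,dy\,dt}^{1/2}$, the latter being exactly the square function in the statement.

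It then remains to show
\[
 \int_{\R^n} A(F)(x)\,S(G)(x)\,dx \;\aleq\; N^{1/2}\int_{\R^n} S(G)(x)\,dx, \qquad N:=\sup_{B}|B|^{-1}\int_{T(B)} t\,|F(y,t)|^2\,dy\,dt .
\]
A naive H\"older bound is unavailable here, since $A(F)$ need not be bounded; instead I would run a stopping-time argument along the level sets $O_k:=\{x:S(G)(x)>2^k\}$, $k\in\Z$, using the standard \emph{localized Carleson estimate}: for every open $O\subset\R^n$ of finite measure there is a maximal-function enlargement $O^\ast$ (e.g.\ $O^\ast=\{\mathcal{M}(\bbbone_O)>\tfrac12\}$) with $|O^\ast|\aleq|O|$ such that $\int_{O^\ast} A(F)^2\,dx\aleq N\,|O|$; this is proved, as in Stein \cite[IV, \textsection 4.4]{Stein-Harmonic-Analysis}, by covering the relevant sawtooth region above $O^\ast$ by tents over a Whitney decomposition with controlled total measure and summing the Carleson condition tent by tent.

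Granting this, on $O_k\setminus O_{k+1}$ one has $S(G)\aleq 2^k$, so Cauchy--Schwarz and the localized Carleson estimate give
\[
 \int_{\R^n} A(F)\,S(G) \;\aleq\; \sum_{k\in\Z} 2^k\int_{O_k\setminus O_{k+1}}\!\! A(F) \;\leq\; \sum_{k\in\Z} 2^k|O_k|^{1/2}\brac{\int_{O_k^\ast} A(F)^2}^{1/2} \;\aleq\; N^{1/2}\sum_{k\in\Z} 2^k|O_k| ,
\]
and $\sum_k 2^k|O_k|\aeq\int_{\R^n}S(G)$ by the layer-cake formula, which is the assertion. The genuine obstacle is the localized Carleson estimate $\int_{O^\ast}A(F)^2\aleq N|O|$: one must combine the Carleson bound on $\mu=t|F|^2\,dy\,dt$ with a maximal-function enlargement of the level sets of $S(G)$ in order to control $A(F)$ locally, as a plain Whitney covering of $O$ itself is too small; everything else --- the Fubini identity, the cone-wise Cauchy--Schwarz, and the layer-cake identity --- is routine. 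Alternatively, the whole statement is \cite[IV, \textsection 4.4, Proposition, p.~162]{Stein-Harmonic-Analysis} and may be cited directly.
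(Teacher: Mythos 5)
The paper does not prove Lemma~\ref{la:FGcarlesonest}; it simply cites it from \cite[IV, \textsection 4.4, Proposition, p.~162]{Stein-Harmonic-Analysis}, which is exactly the fallback you mention in your last sentence. So as a citation your proposal matches the paper; but the self-contained argument you sketch has a genuine gap, and it is not quite where you flag it.

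The problem appears already at the cone-wise Cauchy--Schwarz $\int FG \aleq \int A(F)\,S(G)$: this step is too lossy. Observe that $A(F)$ is the \emph{untruncated} area integral of $tF$, and a finite Carleson norm $N$ for $t|F|^2\,dy\,dt$ does not force $A(F)$ to be finite anywhere. Indeed, take (a smooth approximation of) $t|F|^2\,dy\,dt = \sum_{j\ge 0} 2^{jn}\delta_{(0,2^j)}$. One checks directly that $N\aeq 1$, yet for every $x$ one has $A(F)(x)^2 = \sum_{j\ge 0}\bbbone[\,|x|<2^j\,]=\infty$, since the cone $\Gamma(x)$ keeps picking up the dyadic masses at all heights $2^j>|x|$. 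Hence your ``localized Carleson estimate'' $\int_{O^\ast}A(F)^2 \aleq N|O|$ is false (the left side is infinite), and worse, $\int A(F)\,S(G)$ can equal $+\infty$ while $\int FG$ is finite; no stopping-time manipulation of the product $A(F)S(G)$ can recover the statement. What the Carleson hypothesis does control is the Carleson measure integrated over \emph{tents}: $\int_{T(O)} t|F|^2 \aleq N|O|$ for any open $O$, via a Whitney covering. Your counting $A(F)$ over $O^\ast$ instead integrates over the union of full cones above $O^\ast$, which is unbounded in $t$ and not covered by tents over balls in $O^\ast$.

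The standard Coifman--Meyer--Stein / Stein proof avoids this by postponing Cauchy--Schwarz. With $O_k := \{S(G)>2^k\}$ and $O_k^\ast := \{\mathcal{M}\bbbone_{O_k}>\gamma\}$, one decomposes $\R^{n+1}_+ = \bigsqcup_k \Delta_k$ with $\Delta_k := T(O_k^\ast)\setminus T(O_{k+1}^\ast)$, where $T(O):=\{(y,t): B_t(y)\subset O\}$, and only then applies Cauchy--Schwarz on each shell: $\int_{\Delta_k}|FG| \le \brac{\int_{\Delta_k}t|F|^2}^{1/2}\brac{\int_{\Delta_k}t^{-1}|G|^2}^{1/2}$. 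The first factor is $\le \int_{T(O_k^\ast)}t|F|^2 \aleq N|O_k|$ by the tent covering; the second is $\aleq 2^{2k}|O_k|$ because for $(y,t)\in\Delta_k$ a fixed fraction of $B_t(y)$ lies outside $O_{k+1}$, where $S(G)\le 2^{k+1}$, which lets one push the $y$-integral down to $\{S(G)\le 2^{k+1}\}\cap O_k^\ast$ via Fubini. Summing gives $N^{1/2}\sum_k 2^k|O_k|\aeq N^{1/2}\|S(G)\|_{L^1}$. In short: the decomposition must respect the tent geometry in $\R^{n+1}_+$; splitting into cones over points of $\R^n$ first loses exactly this.
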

If $F$ is chosen to be $\nabla_{\R^{n+1}} P^s_t f$, then we can use the BMO-characterization of $f$ in Proposition~\ref{pr:BMOcharacterization}, and have the following corollary.
\begin{corollary}\label{co:FGcarlesonestBMO}
Let $F^s(x,t) := P^s_t f(x)$, $s \in (0,2)$. Then
\[
 \int_{\R^{n+1}_+} |\nabla_{\R^{n+1}} F^s(x,t)|\, |G(x,t)|\, d(x,t)\aleq [f]_{BMO}\ \int_{\R^n} \brac{\int_{|x-y|<t} |G(y,t)|^2 \frac{dy dt}{t^{n+1}} }^{\frac{1}{2}}dx.
 \]
\end{corollary}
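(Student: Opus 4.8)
The plan is to deduce Corollary~\ref{co:FGcarlesonestBMO} directly from Lemma~\ref{la:FGcarlesonest} by the obvious substitution $F := \nabla_{\R^{n+1}} F^s$, and then invoking the BMO-characterization already stated in Proposition~\ref{pr:BMOcharacterization}. In other words, the corollary is a short composition of two black-box facts proven earlier in the excerpt.

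First I would apply Lemma~\ref{la:FGcarlesonest} with the choice $F(x,t) := |\nabla_{\R^{n+1}} F^s(x,t)|$ and with $G(x,t)$ kept as is (replacing $G$ by $|G|$ costs nothing since only $|G|^2$ enters the right-hand side). This immediately yields
\[
 \int_{\R^{n+1}_+} |\nabla_{\R^{n+1}} F^s(x,t)|\, |G(x,t)|\, d(x,t) \aleq \brac{\sup_{B \subset \R^n} |B|^{-1} \int_{T(B)} t |\nabla_{\R^{n+1}} F^s(y,t)|^2\, dy\, dt}^{\frac{1}{2}}\ \int_{\R^n} \brac{\int_{|x-y|<t} |G(y,t)|^2\, \frac{dy\, dt}{t^{n+1}}}^{\frac{1}{2}}\, dx.
\]
Second, I would observe that the first factor on the right is, by Proposition~\ref{pr:BMOcharacterization} (equation \eqref{eq:BMOequivalence}), comparable to $[f]_{BMO(\R^n)}$; note that Proposition~\ref{pr:BMOcharacterization} is stated for all $s \in (0,2)$, which matches the hypothesis of the corollary. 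Substituting this bound for the square-root factor gives exactly the claimed inequality, and the proof is complete.

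There is essentially no obstacle here: the content is entirely contained in the two cited results, and the only thing to check is that the hypotheses line up (which they do, since both Lemma~\ref{la:FGcarlesonest} and Proposition~\ref{pr:BMOcharacterization} are stated in the generality needed). The one point worth a word of care is that Lemma~\ref{la:FGcarlesonest} is phrased for scalar functions $F$, whereas $\nabla_{\R^{n+1}} F^s$ is vector-valued; but applying the lemma componentwise (or simply to the scalar function $|\nabla_{\R^{n+1}} F^s|$) and using $\sum_j \brac{\int t |\partial_j F^s|^2}^{1/2} \aleq \brac{\int t |\nabla_{\R^{n+1}} F^s|^2}^{1/2}$ handles this at no cost.
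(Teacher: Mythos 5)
Your proof is correct and coincides with the paper's own derivation: the corollary is stated immediately after Lemma~\ref{la:FGcarlesonest} precisely because the authors plug in $F = \nabla_{\R^{n+1}} P^s_t f$ and then invoke Proposition~\ref{pr:BMOcharacterization} to replace the Carleson-measure factor by $[f]_{BMO}$. Your remark about handling the vector-valued gradient (componentwise or via $|\nabla_{\R^{n+1}}F^s|$) is a harmless bookkeeping point that does not change the argument.
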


\begin{proof}[Proof of Proposition~\ref{pr:Lpqest}]
For the first two estimates, observe that with \eqref{eq:maximalpotential}
\[
 \sup_{t > 0} t^{s_3} |H(x,t)| \aleq \mathcal{M} |\lapms{s_3} h|(x).
\]
The estimates then follow from H\"older's inequality and \eqref{eq:Hspequivalencenablax}, \eqref{eq:Hspequivalencenablaxx}, \eqref{eq:Hspequivalencedt}, respectively.
\end{proof}

\begin{proof}[Proof of Proposition~\ref{pr:BMOusualestimate}]
Denote the square function
\[
 \mathcal{S}(H)(x) := \brac{\int_{|x-y|<t} |H(y,t)|^2 \frac{dy dt}{t^{n+1}} }^{\frac{1}{2}}.
\]
Let $W(x,t) := t^{2-s}\nabla_{x}\nabla_{\R^{n+1}} G^s$. Then, in view of Corollary~\ref{co:FGcarlesonestBMO} and \eqref{eq:possionmaximalpt},
\[
\begin{split}
&\int_{\R^{n+1}_+} t^{1+(1-s)(\ell+1)} |\nabla_{\R^{n+1}} \Phi^s(x,t)|\, |\nabla_{x} \nabla_{\R^{n+1}} G^s(x,t)|\, |\partial_t F_1^s(x,t)|\ldots |\partial_t F_\ell^s(x,t)|\, d(x,t) \\
\aleq & [\varphi]_{BMO}\, \int_{\R^{n}} \mathcal{M}(\laps{s} f_1)(x)\, \ldots \mathcal{M}(\laps{s} f_\ell)(x)\, \mathcal{S}(W)(x)  
\end{split}
\]
If we had to work with $\nabla_{\R^{n+1}} F_i^s(x,t)$ then we would also use \eqref{eq:possionmaximalptx} and obtain the same only with $\mathcal{M}(\laps{s} f_i)(x)$ replaced by $\mathcal{M}(\laps{s} f_i)(x)+\mathcal{M}(\nabla^s f_i)(x)$.

With H\"older's inequality and the maximal theorem (here we use that $p_i \in (1,\infty)$),
\[
 \aleq [\varphi]_{BMO}\, \| \laps{s} f_1 \|_{L^{(p_1,q_1)}}\, \ldots \| \laps{s} f_\ell \|_{L^{(p_\ell,q_\ell)}} \| \mathcal{S}(W) \|_{L^{(p_0,q_0)}}.
\]
We conclude the first estimate of Proposition~\ref{pr:BMOusualestimate} with \eqref{eq:squarefctnablachar}. The other estimates follow the same way.
\end{proof}

\begin{proof}[Proof of Proposition~\ref{pr:Hoelderusualestimate}]
First assume that $s_2 > 0$. In view of \eqref{eq:Hoelderequivalence} and \eqref{eq:Hoelderequivalencelimit},
\[
\mathcal{I} := \int_{\R^{n+1}_+} t^{1-\nu-s_1+s_2} |\nabla_{\R^{n+1}} \Phi^s(x,t)|\, |\nabla_{\R^{n+1}} G^s(x,t)|\, |F^s(x,t)|\, d(x,t) 
\]
\[
\aleq  [\varphi]_{C^\nu}\, \int_{\R^{n+1}_+} t^{s_2-s_1} |\nabla_{\R^{n+1}} G^s(x,t)|\, |F^s(x,t)|\, d(x,t). 
\]
With H\"older inequality we find
\[
\aleq  [\varphi]_{C^\nu}\, \left \| x \mapsto \brac{\int_{t=0}^\infty |t^{\frac{1}{2}-s_1} \nabla_{\R^{n+1}} G^s(x,t)|^2\ dt}^{\frac{1}{2}} \right \|_{L^{(p_2,q_2)}}\, \left \|x\mapsto \brac{\int_{t=0}^\infty  |t^{s_2-\frac{1}{2}}\, F^s(x,t)|^2\ dt}^{\frac{1}{2}} \right \|_{L^{(p_2,q_2)}},
\]
and with another H\"older inequality and in view of \eqref{eq:Hspequivalencedt} and \eqref{eq:Hspequivalencenablax},
\[
 \aleq [\varphi]_{C^\nu}\, \|\laps{s_1} g\|_{L^{(p_1,q_1)}}\,\left  \|x\mapsto \brac{\int_{t=0}^\infty  |t^{s_2-\frac{1}{2}}\, F^s(x,t)|^2\ dt}^{\frac{1}{2}} \right \|_{L^{(p_2,q_2)}}.
\]
Now 
\[
 t^{s_2-\frac{1}{2}}\, F^s(x,t) = t^{s_2-\frac{1}{2}}\ p_t^s \ast f(x) = t^{s_2-\frac{1}{2}}\ \laps{s_1} (p_t^s) \ast \lapms{s_1}f(x) = t^{-\frac{1}{2}} \kappa_t\ast \lapms{s_2} f(x),
\]
for
\[
 \kappa(z) = \laps{s_2} p_1^s(z),
\]
and $\kappa_t(z) = t^{-n}\kappa(z/t)$. Observe that $\kappa$ is integrable and $\int_{\R^n} \kappa = 0$, since $s_2 > 0$ and thus $\kappa$ is a (fractional) derivative. Thus,
\[
 \left \|x\mapsto \brac{\int_{t=0}^\infty  |t^{s_2-\frac{1}{2}}\, F^s(x,t)|^2\ dt}^{\frac{1}{2}} \right \|_{L^{(p_2,q_2)}} = \left \|x\mapsto \brac{\int_{t=0}^\infty  |\kappa_t \ast \lapms{s_2} f(x)|^2\ \frac{dt}{t}}^{\frac{1}{2}} \right \|_{L^{(p_2,q_2)}}
\]
This is a (tangential) square function as in \cite[Chapter I, \textsection 6.3, (20), p. 27]{Stein-Harmonic-Analysis}, and with \cite[Chapter I, \textsection 8.23, p. 46]{Stein-Harmonic-Analysis} we conclude
\[
  \left \|x\mapsto \brac{\int_{t=0}^\infty  |t^{s_2-\frac{1}{2}}\, F^s(x,t)|^2\ dt}^{\frac{1}{2}} \right \|_{L^{(p_2,q_2)}} \aleq \|\lapms{s_2} f\|_{L^{(p,q)}}.
\]
This shows the main estimate in Proposition~\ref{pr:Hoelderusualestimate} for $s_2 > 0$, the other estimates for $s_2 > 0$ follow by variations of the above argument. If $F^s(x,t)$ is replaced with $t|\nabla_{\R^{n+1}} F^s(x,t)$ and $s_2 = 0$ we still can argue as above and find a suitable $\kappa$ with the mean value property $\int_{\R^n} \kappa = 0$.

The remaining case $s_2 = 0$ follows essentially as in the proof of Proposition~\ref{pr:BMOusualestimate}. One needs to replace the BMO-characterization in \eqref{eq:BMOequivalence} with
\[
 \sup_{B \subset \R^n} \brac{|B|^{-1}\int_{T(B)}  t^{1-2\nu}|\nabla_{y} F^s(y,t)|^2\, dy\, dt}^{\frac{1}{2}} \aleq [D^\nu f]_{BMO(\R^n)},
\]
and
\[
 \sup_{B \subset \R^n} \brac{|B|^{-1}\int_{T(B)}  t^{1-2\nu}|\partial_t F^s(y,t)|^2\, dy\, dt}^{\frac{1}{2}} \aleq [\laps{\nu} f]_{BMO(\R^n)}.
\]
The latter holds for $0 \leq \nu < \min \{s,1\}$ by the same arguments that lead to \eqref{eq:BMOequivalence}, see \cite[Theorem 3. Chapter IV, \textsection 4.3., p.159f]{Stein-Harmonic-Analysis}: one writes
\[
 t^{-\nu}\nabla_{y} F^s(y,t) = t^{-1} \kappa_t \ast D^s f(y), \quad t^{-\nu} \partial_t F^s(y,t) = t^{-1} \kappa_t \ast \laps{s} f(y),
\]
for suitable kernels $\kappa \in L^1(\R^n)$ with the mean value property $\int_{\R^n} \kappa = 0$. Observe that for $f \in C_c^\infty(\R^n)$, by boundedness of the Riesz transform on $BMO$,
\[
 [D^\nu f]_{BMO(\R^n)} \aeq [\laps{\nu} f]_{BMO(\R^n)}.
\]
\end{proof}

\end{appendix}

\subsection*{Acknowledgment}
We would like to thank P. Haj\l{}asz for pointing out to us the work by Brezis and Nguyen~\cite{Brezis-Nguyen-2011}. 

E. Lenzmann is supported by the Swiss National Science Foundation (SNF) through Grant No. 200021–149233. 
A. Schikorra is supported by the German Research Foundation (DFG) through Grant No. SCHI-1257-3-1. A.S. is Heisenberg fellow.

\bibliographystyle{amsplain}
\bibliography{bib}%

\def\cprime{$'$}
\providecommand{\bysame}{\leavevmode\hbox to3em{\hrulefill}\thinspace}
\providecommand{\MR}{\relax\ifhmode\unskip\space\fi MR }
\providecommand{\MRhref}[2]{%
  \href{http://www.ams.org/mathscinet-getitem?mr=#1}{#2}
}
\providecommand{\href}[2]{#2}
\begin{thebibliography}{10}

\bibitem{Bethuel-1992}
F.~Bethuel, \emph{Un r\'esultat de r\'egularit\'e pour les solutions de
  l'\'equation de surfaces \`a courbure moyenne prescrite}, C. R. Acad. Sci.
  Paris S\'er. I Math. \textbf{314} (1992), no.~13, 1003--1007. \MR{1168525}

\bibitem{BPSknot12}
S.~Blatt, Ph. Reiter, and A.~Schikorra, \emph{Harmonic analysis meets critical
  knots. {C}ritical points of the {M}\"obius energy are smooth}, Trans. Amer.
  Math. Soc. \textbf{368} (2016), no.~9, 6391--6438. \MR{3461038}

\bibitem{Bourgain-Brezis-Mironescu-2005}
J.~Bourgain, H.~Brezis, and P.~Mironescu, \emph{Lifting, degree, and
  distributional {J}acobian revisited}, Comm. Pure Appl. Math. \textbf{58}
  (2005), no.~4, 529--551. \MR{2119868}

\bibitem{Bourgain-Li-2014}
J.~Bourgain and D.~Li, \emph{On an endpoint {K}ato-{P}once inequality},
  Differential Integral Equations \textbf{27} (2014), no.~11/12, 1037--1072.

\bibitem{BrC84}
H.~Brezis and J.-M. Coron, \emph{Multiple solutions of {$H$}-systems and
  {R}ellich's conjecture}, Comm. Pure Appl. Math. \textbf{37} (1984), no.~2, \
  149--187. \MR{MR733715 (85i:53010)}

\bibitem{Brezis-Nguyen-2011}
H.~Brezis and H.~Nguyen, \emph{The {J}acobian determinant revisited}, Invent.
  Math. \textbf{185} (2011), no.~1, 17--54.

\bibitem{Brezis-Nirenberg-1995}
H.~Brezis and L.~Nirenberg, \emph{Degree theory and {BMO}. {I}. {C}ompact
  manifolds without boundaries}, Selecta Math. (N.S.) \textbf{1} (1995), no.~2,
  197--263. \MR{1354598}

\bibitem{Brezis-Nirenberg-1996}
\bysame, \emph{Degree theory and {BMO}. {II}. {C}ompact manifolds with
  boundaries}, Selecta Math. (N.S.) \textbf{2} (1996), no.~3, 309--368, With an
  appendix by the authors and Petru Mironescu. \MR{1422201}

\bibitem{BuiCandy-2015}
H.-Q. {Bui} and T.~{Candy}, \emph{{A characterisation of the Besov-Lipschitz
  and Triebel-Lizorkin spaces using Poisson like kernels}}, arXiv:1502.06836
  (2015).

\bibitem{CaffarelliSilvestre07}
L.~Caffarelli and L.~Silvestre, \emph{An extension problem related to the
  fractional {L}aplacian}, Comm. Partial Differential Equations \textbf{32}
  (2007), no.~7-9, 1245--1260.

\bibitem{Calderon-1965}
A.-P. Calder{\'o}n, \emph{Commutators of singular integral operators}, Proc.
  Nat. Acad. Sci. U.S.A. \textbf{53} (1965), 1092--1099. \MR{0177312}

\bibitem{Chanillo-1982}
S.~Chanillo, \emph{A note on commutators}, Indiana Univ. Math. J. \textbf{31}
  (1982), no.~1, 7--16.

\bibitem{Chanillo-1991}
\bysame, \emph{Sobolev inequalities involving divergence free maps}, Comm.
  Partial Differential Equations \textbf{16} (1991), no.~12, 1969--1994.
  \MR{1140780}

\bibitem{CLMS}
R.~Coifman, P.-L. Lions, Y.~Meyer, and S.~Semmes, \emph{Compensated compactness
  and {H}ardy spaces}, J.~Math.~Pures Appl.,~IX.~S\'er. \textbf{72} (1993),
  no.~3, \ 247--286.

\bibitem{Coifman-Jones-Semmes-1989}
R.~R. Coifman, P.~W. Jones, and S.~Semmes, \emph{Two elementary proofs of the
  {$L^2$} boundedness of {C}auchy integrals on {L}ipschitz curves}, J. Amer.
  Math. Soc. \textbf{2} (1989), no.~3, 553--564. \MR{986825}

\bibitem{Coifman-McIntosh-Meyer-1982}
R.~R. Coifman, A.~McIntosh, and Y.~Meyer, \emph{L'int\'egrale de {C}auchy
  d\'efinit un op\'erateur born\'e sur {$L^{2}$} pour les courbes
  lipschitziennes}, Ann. of Math. (2) \textbf{116} (1982), no.~2, 361--387.

\bibitem{Coifman-Meyer-1986}
R.~R. Coifman and Y.~Meyer, \emph{Nonlinear harmonic analysis, operator theory
  and {P}.{D}.{E}}, Beijing lectures in harmonic analysis ({B}eijing, 1984),
  Ann. of Math. Stud., vol. 112, Princeton Univ. Press, Princeton, NJ, 1986,
  pp.~3--45.

\bibitem{Coifman-Rochberg-Weiss-1976}
R.~R. Coifman, R.~Rochberg, and G.~Weiss, \emph{Factorization theorems for
  {H}ardy spaces in several variables}, Ann. of Math. (2) \textbf{103} (1976),
  no.~3, 611--635.

\bibitem{DndMan}
F.~Da~Lio, \emph{Fractional harmonic maps into manifolds in odd dimension
  $n>1$}, Calc. Var. PDE \textbf{48} (2013), no.~3-4, 421--445 (English).

\bibitem{DR1dMan}
F.~Da~Lio and T.~Rivi{\`e}re, \emph{{Sub-criticality of non-local Schr\"odinger
  systems with antisymmetric potentials and applications to half-harmonic
  maps}}, Advances in Mathematics \textbf{227} (2011), no.~3, 1300 -- 1348.

\bibitem{DR1dSphere}
\bysame, \emph{Three-term commutator estimates and the regularity of
  1/2-harmonic maps into spheres}, Analysis and PDE \textbf{4} (2011), no.~1,
  149 -- 190.

\bibitem{DaLio-Schikorra-palphSphere}
F.~Da~Lio and A.~Schikorra, \emph{{n/p-harmonic maps: regularity for the sphere
  case}}, Adv.Calc.Var. \textbf{7} (2014), 1--26.

\bibitem{Do-Thiele-2015}
Y.~Do and Ch. Thiele, \emph{{$L^p$} theory for outer measures and two themes of
  {L}ennart {C}arleson united}, Bull. Amer. Math. Soc. (N.S.) \textbf{52}
  (2015), no.~2, 249--296. \MR{3312633}

\bibitem{Dyda-Kuznetsov-Kwasnicki-2015}
B.~{Dyda}, A.~{Kuznetsov}, and M.~{Kwa{\'s}nicki}, \emph{{Fractional Laplace
  operator and Meijer G-function}}, ArXiv e-prints (2015).

\bibitem{GrafakosCF}
L.~Grafakos, \emph{Classical {F}ourier analysis}, third ed., Graduate Texts in
  Mathematics, vol. 249, Springer, New York, 2014.

\bibitem{Grafakos-Maldonado-Naibo-2014}
L.~Grafakos, D.~Maldonado, and V.~Naibo, \emph{A remark on an endpoint
  kato-ponce inequality}, Differential Integral Equations \textbf{27} (2014),
  no.~5/6, 415--424.

\bibitem{Hao-HA}
C.~Hao, \emph{Introduction to harmonic analysis}, lecture notes, available
  \href{http://www.math.ac.cn/kyry/hcc/}{online}.

\bibitem{Heinz-1975}
E.~Heinz, \emph{Ein {R}egularit\"atssatz f\"ur schwache {L}\"osungen
  nichtlinearer elliptischer {S}ysteme}, Nachr. Akad. Wiss. G\"ottingen
  Math.-Phys. Kl. II (1975), no.~1, 1--13. \MR{0382845}

\bibitem{Helein90}
F.~H{\'{e}}lein, \emph{{R\'{e}gularit\'{e} des applications faiblement
  harmoniques entre une surface et une sph\`{e}re}}, C.R. Acad. Sci. Paris 311,
  S\'{e}rie I (1990), 519--524.

\bibitem{Hunt-1966}
R.~A. Hunt, \emph{On {$L(p,\,q)$} spaces}, Enseignement Math. (2) \textbf{12}
  (1966), 249--276.

\bibitem{Kato-Ponce-1988}
T.~Kato and G.~Ponce, \emph{Commutator estimates and the {E}uler and
  {N}avier-{S}tokes equations}, Comm. Pure Appl. Math. \textbf{41} (1988),
  no.~7, 891--907.

\bibitem{Kenig-Ponce-Vega-1993}
C.~E. Kenig, G.~Ponce, and L.~Vega, \emph{Well-posedness and scattering results
  for the generalized {K}orteweg-de {V}ries equation via the contraction
  principle}, Comm. Pure Appl. Math. \textbf{46} (1993), no.~4, 527--620.

\bibitem{Li-2016}
D.~{Li}, \emph{{On {K}ato-{P}once and fractional {L}eibniz}}, arXiv:1609.01780
  (2016).

\bibitem{Lindberg-2016}
S.~{Lindberg}, \emph{{On the Hardy space theory of compensated compactness
  quantities}}, arXiv:1611.02223 (2016).

\bibitem{Marias-1987}
M.~Marias, \emph{Littlewood-{P}aley-{S}tein theory and {B}essel diffusions},
  Bull. Sci. Math. (2) \textbf{111} (1987), no.~3, 313--331. \MR{912955}

\bibitem{Mironescu-Russ-2015}
P.~Mironescu and E.~Russ, \emph{Traces of weighted {S}obolev spaces. {O}ld and
  new}, Nonlinear Anal. \textbf{119} (2015), 354--381. \MR{3334194}

\bibitem{Mueller90}
S.~M{\"u}ller, \emph{Higher integrability of determinants and weak convergence
  in {$L\sp 1$}}, J.~Reine Angew.~Math. \textbf{412} (1990), \ 20--34.
  \MR{MR1078998 (92b:49026)}

\bibitem{Mueller-Sverak-1995}
S.~M{\"u}ller and V.~{\v{S}}ver{\'a}k, \emph{On surfaces of finite total
  curvature}, J. Differential Geom. \textbf{42} (1995), no.~2, 229--258.
  \MR{1366547}

\bibitem{Reshetnyak-1968}
Y.~G. Reshetnyak, \emph{Stability theorems for mappings with bounded
  excersions}, Siberian Mathematical Journal \textbf{9} (1968), no.~3,
  499--512.

\bibitem{Riv06}
T.~Rivi{\`e}re, \emph{Conservation laws for conformally invariant variational
  problems}, Invent. Math. \textbf{168} (2007), no.~1, 1--22.

\bibitem{Roncal-Stinga-2016}
L.~Roncal and P.~R. Stinga, \emph{Fractional laplacian on the torus},
  Communications in Contemporary Mathematics \textbf{18} (2016), no.~03,
  1550033.

\bibitem{Samko-2002}
S.G. Samko, \emph{Hypersingular integrals and their applications}, Analytical
  Methods and Special Functions, vol.~5, Taylor \& Francis, Ltd., London, 2002.

\bibitem{Schikorra-2011}
A.~Schikorra, \emph{{Interior and Boundary-Regularity for Fractional Harmonic
  Maps on Domains}}, arXiv: 1103.5203, (unpublished) (2011).

\bibitem{Schikorra-SNHarmS10}
\bysame, \emph{{Regularity of n/2-harmonic maps into spheres}}, J. Differential
  Equations \textbf{252} (2012), 1862--1911.

\bibitem{SchikorraCPDE14}
\bysame, \emph{Integro-differential harmonic maps into spheres}, Comm.PDE
  \textbf{40} (2015), no.~1, 506--539.

\bibitem{Schikorra-epsilon}
\bysame, \emph{{$\varepsilon$}-regularity for systems involving non-local,
  antisymmetric operators}, Calc. Var. Partial Differential Equations
  \textbf{54} (2015), no.~4, 3531--3570.

\bibitem{Schikorra-commutators}
Armin Schikorra, \emph{Nonlinear commutators for the fractional p-laplacian and
  applications}, Mathematische Annalen \textbf{366} (2016), no.~1, 695--720.

\bibitem{Shen-2013}
Z.~Shen, \emph{{Commutator Estimates for the Dirichlet-to-Neumann Map in
  Lipschitz Domains}}, arXiv:1308.6226 (2013).

\bibitem{Sickel-Youssfi-1999a}
W.~Sickel and A.~Youssfi, \emph{The characterization of the regularity of the
  {J}acobian determinant in the framework of {B}essel potential spaces on
  domains}, J. London Math. Soc. (2) \textbf{60} (1999), no.~2, 561--580.

\bibitem{Stein-Singular-Integrals}
E.~M. Stein, \emph{Singular integrals and differentiability properties of
  functions}, Princeton Mathematical Series, No. 30, Princeton University
  Press, Princeton, N.J., 1970.

\bibitem{Stein-Harmonic-Analysis}
\bysame, \emph{Harmonic analysis: real-variable methods, orthogonality, and
  oscillatory integrals}, Princeton Mathematical Series, vol.~43, Princeton
  University Press, Princeton, NJ, 1993, With the assistance of Timothy S.
  Murphy, Monographs in Harmonic Analysis, III.

\bibitem{Strzelecki-2001}
P.~Strzelecki, \emph{Hardy space estimates for higher-order differential
  operators}, Indiana Univ. Math. J. \textbf{50} (2001), no.~3, 1447--1461.
  \MR{1871764}

\bibitem{Tartar84}
L.~Tartar, \emph{{Remarks on Oscillations and Stokes' Equation}}, Lecture Notes
  in Physics, 230, macroscopic Modelling of Turbulent Flows, Proceedings,
  Sophia-Antipolis, France (1984), 24--31.

\bibitem{Tartar-2007}
\bysame, \emph{An introduction to {S}obolev spaces and interpolation spaces},
  Lecture Notes of the Unione Matematica Italiana, vol.~3, Springer, Berlin;
  UMI, Bologna, 2007. \MR{2328004}

\bibitem{Verdera-2001}
J.~Verdera, \emph{{$L^2$} boundedness of the {C}auchy integral and {M}enger
  curvature}, Harmonic analysis and boundary value problems ({F}ayetteville,
  {AR}, 2000), Contemp. Math., vol. 277, Amer. Math. Soc., Providence, RI,
  2001, pp.~139--158. \MR{1840432}

\bibitem{Wente69}
H.~C. Wente, \emph{An existence theorem for surfaces of constant mean
  curvature}, J.~Math.~Anal.~Appl. \textbf{26} (1969), \ 318--344.
  \MR{MR0243467 (39 \#4788)}

\bibitem{Yang-2013}
R.~{Yang}, \emph{{On higher order extensions for the fractional Laplacian}},
  {arXiv:1302.4413} (2013).

\end{thebibliography}

\end{document}